\newtheorem{lemma}{Lemma}[section]
\newtheorem{prop}[lemma]{Proposition}
\newtheorem{thmA}{Theorem}
\newcounter{\thethmA}{\Alph{\thethmA}}
\newtheorem{cor}[lemma]{Corollary}
\theoremstyle{definition}
\newtheorem{defi}{Definition}
\newtheorem{assu}{Assumption}
\theoremstyle{remark}
\newtheorem{oss}{Remark}
\newtheorem{es}{Example}
\newcommand{\call}[1]{\ensuremath {\mathcal{#1}}}
\newcommand{\scr}[1]{\ensuremath {\mathscr{#1}}}
\newcommand{\sx} {\ensuremath {\left(}}
\newcommand{\dx} {\ensuremath {\right)}}
\newcommand{\mat}[1]{\ensuremath {\mathbb{#1}}}
\newcommand{\Hom} {\ensuremath \textrm{Hom}}
\newcommand{\Ker} {\ensuremath \textrm{Ker}\,}
\newcommand{\gen}[1]{\ensuremath \langle #1 \rangle}
\newcommand{\agisce} {\ensuremath \curvearrowright}
\renewcommand{\tilde}[1]{\ensuremath {{#1}^\upharpoonright}}
\renewcommand{\epsilon}{\ensuremath \varepsilon}
\let\oldmarginpar\marginpar
\renewcommand\marginpar[1]{\-\oldmarginpar[\raggedleft\footnotesize #1]%
  {\raggedright\footnotesize #1}}
\title{A Salvetti complex for Toric Arrangements and its fundamental group}
\author{Giacomo d'Antonio and Emanuele Delucchi}
\begin{document}
\maketitle
 
\begin{abstract}
We describe a combinatorial model for the complement of a complexified
toric arrangement by using nerves of
acyclic categories.
This generalizes recent work of
Moci and Settepanella \cite{mocisettepanella} on {\em thick} toric arrangements.

Moreover, we study its fundamental group and compute a presentation thereof.
\end{abstract}

\section*{Introduction}

A \emph{toric arrangement} is, roughly speaking, a family of
subtori of a complex torus $\sx\mat C^*\dx^n$. The study of the
topology and the combinatorics of such objects is a fairly new, yet
thriving topic. As the very first attempt in this direction we can
cite the work of Lehrer 
\cite{lehrer95toral}, where the representation theory
on the cohomology of the configuration space $F(\mat C^*, n)$ of $n$ points in the
pointed complex plane is studied.
This configuration space is indeed the complement of a toric arrangement. Its topology is already well known, since 
$F(\mat C^*, n) \simeq F(\mat C, n+1)$.

The foundation of the topic can be traced to the paper 
\cite{de2005geometry} by De Concini and Procesi. There the main objects are defined, the cohomology of the complement 
of a toric arrangement is studied (mainly from the point of view
of algebraic geometry) and some applications of the theory are outlined.
In particular, these authors treat the topic with the explicit goal
of generalizing the theory of hyperplane arrangements, and they put
all this in a wider context that encompasses applications in
topics such as the study of integer points of Zonotopes and box
splines. An extensive account of the work of De Concini and Procesi on this new
subject can be found in their forthcoming book \cite{deconcini2010topics}.

Ehrenborg, Readdy and Slone \cite{ehrenborg2009affine} take another
point of view, studying 
toric arrangements on the ``compact torus'' $(S^1)^n$ and considering the
problem of enumerating faces of the induced decomposition of the
compact torus.

The next step is the work of Moci, in particular
his papers \cite{mocicombinatorics}, \cite{mocitutte2009} and
\cite{mociwonderful2009}, developing the theory with a
special focus on combinatorics. In particular, Moci introduces a
two-variable polynomial that encodes enumerative invariants of many
of the different objects populating the landscape outlined by De
Concini and Procesi in \cite{deconcini2010topics}. The same author, in joint
work with Settepanella \cite{mocisettepanella}, studied the homotopy
type of the complement of a special class of toric arrangements ({\em
  thick} arrangements, see Section \ref{sec:toric} below). In this work we will use a
similar but more general approach, so that our results hold for a
wider class of toric arrangements, which we call {\em complexified}
because of structural affinity with the case of hyperplane arrangements.

Indeed, a rich and lively theory exists for arrangements of
hyperplanes in affine complex space. An affine hyperplane is the
(translate of the) kernel of a linear form. An affine arrangement is
called complexified if the defining linear forms are real linear
forms. Equivalently, a complexified arrangement induces an arrangement
of real (affine) hyperplanes that determines it completely. It is from
this equivalent formulation that we take inspiration for our
definition of {\em complexified toric arrangements}: these are the
arrangements that induce an arrangement in the compact torus and are
determined by it. Every `thick' arrangement in the sense of
\cite{mocisettepanella} is complexified, and there are nonthick
complexified arrangements.\\

It is our explicit goal to try to present the theory and the results in a way
that at once underlines the structural similarities with the theory of
hyperplane arrangements and shows where (and why) the peculiarities of
the toric theory are. 

We will try to do so by using a
combinatorial tool that aptly generalizes the idea of a poset and its order complex:
{\em acyclic categories} and their nerves.

Our first main result shows that the combinatorial
structure of a complexified toric arrangement can be used to construct an
acyclic category whose nerve is homotopy equivalent to the complement
of the arrangement. It is this acyclic category that we suggest to
call {\em Salvetti category}.
Accordingly, we suggest to call the complex
obtained as the nerve of the Salvetti category the {\em Salvetti
  complex} of the toric arrangement. Our result specializes to the
construction of \cite{mocisettepanella} for the case of thick arrangements.

The second main result is the computation of a (finite) presentation
for the fundamental group of the arrangement's complement, appearing
here for the first time, to the best of our knowledge. \\

Our paper begins with a review of the relevant background facts about
hyperplane arrangements and acyclic categories: this will be the content of Section \ref{sec:background}.
Then, in Section \ref{sec:toric} we give a brief account of the theory
of toric arrangements, with the special goal to set some notations,
terminology and basic facts that will be relevant for the sequel.
With Section \ref{sec:salvetti} we will enter the core ouf our
work, defining our combinatorial model 
(Definition \ref{df:salvetticategory}) and proving our first main result (Theorem
\ref{teo:teoremone}): the nerve of the Salvetti category
models the homotopy type of any complexified toric arrangement.
The computation of our presentation for the fundamental group
will be carried out in Section \ref{sec:fg}, and the presentation
itself will be given as our second main result, Theorem \ref{teo:mainfg}.

\subsubsection*{Acknowledgements} The second author was introduced to
this subject by Luca Moci, whom he thanks for the many interesting and
insightful conversations.

The two authors started their collaboration during the special period on arrangements and
configuration spaces at the Centro De Giorgi of the Scuola Normale
Superiore. This work was mainly carried out at the department of
mathematics of the University of Bremen.

Both authors gladly acknowledge conversations with prof. Mario
Salvetti, thanking him for pointing out a flaw in Section 4 of an early version of
this paper.

\section{Background}
\label{sec:background}
\subsection{Arrangements of hyperplanes}
\label{sec:hyperplanes}
Before turning our attention to toric arrangements, let us briefly review
some basics about hyperplane arrangements.

Let families of linear
forms $l_1,\ldots l_n \in \Hom(\mathbb C ^d, \mathbb C)$ and
scalars $z_1,\ldots z_n$ be given. For every $i=1,\ldots, n$ we have then an
affine hyperplane
$$H_i:=\{z\in \mathbb C^d : l_i(z)=z_i\}.$$ 
The {\em (affine) hyperplane arrangement} in $\mathbb C^d$ defined by
the given linear forms and scalars is the set $$\scr A
= \{H_1,\ldots , H_n\}. $$  

The arrangement is called {\em complexified} if its defining forms
are real, i.e., $l_i\in\Hom(\mathbb R^d , \mathbb R)$ for all $i$.

There are several descriptions of the homotopy type of the complement of
a set of hyperplanes in complex space.
In this paper we will take inspiration by the work of Salvetti
\cite{salvetti1987tcr}, where a regular polytopal complex 
which embeds in the complement of a 
{complexified real arrangement} as a deformation retract is
constructed: the \emph{Salvetti complex}.

\begin{defi}\label{def:salvettiposetaffine}
  Let $\scr A$ be a complexified real arrangement in
  $\mat C^n$. We write $\call D = \call D(\scr A)$
  for the cellular decomposition induced by $\scr A$ on $\mat R^n$ and
  $\call F = \call F(\scr A)$ for its face poset (ordered by
  inclusion\footnote{The reader should be aware that this is in
    contrast to some of the existing literature.}). The maximal
  elements of $\call F$ are called {\em chambers}.
\end{defi}

Given a face $F \in \call F$, we can consider the affine
subspace $|F|$ it generates, say $|F|=y+L$ for a linear subspace $L$. The projection map
$\pi_F: \mat R^n \to \mat R^n/L$ maps chambers of
$\scr A$ on  chambers of 
the arrangement
\begin{equation}\label{eq:quotienthyperplanes}
  \scr A_F = \{\pi_F(H):\, F \subseteq H\}.
\end{equation}

 We define the
  \emph{Salvetti poset} $\mbox{Sal}(\scr A)$ on the element set
  \begin{displaymath}
    \{[F, C]:\, F,C \in \call D \mbox{ and } F \leq C \mbox{ in }
    \call F\}
  \end{displaymath}
  by the order relation
  \begin{equation}
    \label{eq:salvettiaffine}
    [F_1,C_1] \leq [F_2,C_2] \iff
    F_2 \leq F_1 \mbox{ in } \call F \mbox{ and }
    \pi_{F_1}(C_2) = \pi_{F_1}(C_1).
  \end{equation}

\begin{defi}\label{def:salvettiaffine}
  Let $\scr A$ be a complexified real arrangement in $\mat C^n$;
  the \emph{Salvetti complex} of $\scr A$ is the simplicial complex
  $\call S = \call S(\scr A) := \Delta(\mbox{Sal}(\scr A))$.
\end{defi}

\begin{prop}[Salvetti \cite{salvetti1987tcr}]
The complex $\call S (\scr A)$ is a deformation retract of the
arrangement's complement, i.e., of the space $\mathbb C^d \setminus \bigcup_{i=1}^n H_i$
\end{prop}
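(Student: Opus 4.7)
Write $M(\scr A) := \mat C^d \setminus \bigcup_i H_i^{\mat C}$, where $H_i^{\mat C} = \{z \in \mat C^d : l_i(z) = z_i\}$ denotes the complexification of $H_i$. Since $l_i(x+iy) = l_i(x) + i\, l_i(y)$, a point $x + iy \in \mat C^d$ lies in $M(\scr A)$ if and only if for every index $i$, either $x \notin H_i$ or $l_i(y) \neq 0$. This asymmetric condition is naturally encoded by a pair $(F, C)$ with $F \leq C$ in $\call F$: the face $F$ records which hyperplanes pass through the real part, and the chamber $C$ records into which open cone of the arrangement the imaginary part points.

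The first step of the plan is to realize each element $[F,C] \in \mbox{Sal}(\scr A)$ as an embedded cell of $M(\scr A)$. Fix interior basepoints $w_F \in F$ and $v_C \in C$, and set
\[
e_{[F,C]} := \{ x + i(v_C - w_F) : x \in \bar{F} \cap U_F \},
\]
where $U_F$ is a sufficiently small neighborhood of $w_F$ in $|F|$. For any $H_i \supseteq F$ one has $l_i(v_C - w_F) \neq 0$ because $C$ lies strictly on one side of $H_i$; for any $H_i \not\supseteq F$ the real part avoids $H_i$ provided $U_F$ is small, so $e_{[F,C]} \subset M(\scr A)$. A direct check shows that $e_{[F_1, C_1]}$ lies in the boundary of $e_{[F_2, C_2]}$ exactly when \eqref{eq:salvettiaffine} holds, so the family $\{e_{[F,C]}\}$ assembles into a regular polytopal subcomplex $X \subset M(\scr A)$ whose face poset is $\mbox{Sal}(\scr A)$. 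The barycentric subdivision of $X$ is by construction $\Delta(\mbox{Sal}(\scr A)) = \call S(\scr A)$, so it suffices to produce a deformation retraction of $M(\scr A)$ onto $X$.

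This last step is the technical heart of the argument and the main obstacle. The plan is a two-stage retraction. In the first stage, contract the imaginary part of each $p = x + iy$ along the straight-line homotopy from $y$ to $v_{C(x,y)} - w_{F(x)}$, where $F(x) \in \call F$ is the face containing $x$ and $C(x,y)$ is the chamber of the local arrangement $\scr A_{F(x)}$ determined by the direction $y$. The sign of $l_i(y)$ is preserved along this homotopy for every $H_i \supseteq F(x)$, and $l_i(x) \neq z_i$ for all remaining $i$, so the path stays inside $M(\scr A)$. In the second stage, retract the real part $x$ to $w_{F(x)}$ within $F(x)$; this is harmless because avoidance of every hyperplane has already been secured by the imaginary part. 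The delicate point is continuity across adjacent strata of $\call D$: as $x$ approaches a face of higher codimension, both the target $w_{F(x)}$ and the indexing chamber $C(x,y)$ jump, so the homotopies have to be interpolated on tubular neighborhoods of each face, and the combinatorics governing this interpolation is precisely what forces \eqref{eq:salvettiaffine} to be the correct incidence relation. This is essentially the content of Salvetti's original argument in \cite{salvetti1987tcr}; a streamlined modern alternative is to cover $M(\scr A)$ by contractible open neighborhoods of the $e_{[F,C]}$ and invoke the nerve theorem.
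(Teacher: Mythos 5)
The paper states this proposition without proof, citing Salvetti's original 1987 paper, so there is no ``paper proof'' to compare against; your overall strategy (embed a regular polytopal complex with face poset $\mbox{Sal}(\scr A)$ into $M(\scr A)$, then deformation retract) is indeed Salvetti's route, and the two-stage retraction sketch is in the right spirit.

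However, the cell construction has a dimension error that breaks the argument. Your $e_{[F,C]}$ has its real part ranging over $\bar{F} \cap U_F$, a neighborhood of $w_F$ in $|F|$, which is $(d - \mbox{codim}\,F)$-dimensional; but the element $[F,C]$ of $\mbox{Sal}(\scr A)$ must be realized as a cell of dimension $\mbox{codim}(F)$. For instance, with $F = C$ a chamber (codimension $0$), your $e_{[C,C]}$ is a $d$-dimensional ball, whereas $[C,C]$ is a vertex of the Salvetti poset. The incidence check then also comes out backwards: the order relation \eqref{eq:salvettiaffine} has $F_2 \leq F_1$ in $\call F$, hence $\dim F_2 \leq \dim F_1$, so under your construction $\dim e_{[F_2,C_2]} \leq \dim e_{[F_1,C_1]}$ --- the opposite of what is needed for $e_{[F_1,C_1]}$ to lie on the boundary of $e_{[F_2,C_2]}$. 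In Salvetti's actual construction the $0$-cell attached to $[C,C]$ is a single point (roughly $w_C + i\,w_C$ for an interior point $w_C$ of $C$), the vertices of the $k$-cell $e_{[F,C]}$ are precisely the points $e_{[D,D]}$ for chambers $D$ with $F \leq D$, and the cell is a $\mbox{codim}(F)$-dimensional polytope spanned by them; the real coordinate varies in directions transverse to $|F|$, not along $\bar{F}$. Fixing the cell construction --- and, correspondingly, the target of the first-stage retraction --- is required before the rest of the sketch can go through.
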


The simplicial complex $\call S$ is the barycentric subdivision of a 
regular polytopal complex that we want now to describe.

Consider the graph $\call G(\scr A)$ with  the set of
chambers of $\scr A$ as vertex set and edge set given by 
\begin{displaymath}
  E = \{e_{[F,C]} = (C,D):\, F \in \call D,\, \mbox{codim}\,(F) = 1,\,
  F \leq C,\, \mbox{op}(C,F) = D\}
\end{displaymath}
where $\mbox{op}(C,F)$ is the \emph{opposite chamber} of $C$ with
respect to $F$. We can assign a direction to an edge $e_{[F,C]}$ by
thinking it oriented from $C$ to $\mbox{op}(C,F)$.
We say that every edge $e_{[F,C]}$ of $\call G(\scr A)$
`crosses' the hyperplane which supports $F$. A hyperplane $H$
separates two chambers $C$ and $D$ if a straight line segment from any
point in the interior of $C$ to any point in the interior of $D$
intersects $H$.

A path in $\call G(\scr A)$ from a vertex (chamber) $C$ to a vertex
(chamber) $D$ is {\em positive minimal} if it is directed and if it
never crosses any hyperplane more than once.

\begin{defi}\label{def:unsubdivided}
  The \emph{unsubdivided Salvetti complex} is the polytopal complex
  \begin{enumerate}[(i)]
  \item whose $1$-skeleton is the realization of the graph $\call G(\scr A)$;
  \item whose $k$-cells corresponds to the pairs $[F,C]$ with $F \in
    \call F (\scr A)$ a
    face of codimension $k$ and $C$ a chamber with $F \leq C$;
  \item where the $1$-skeleton of a $k$-cell $e_{[F,C]}$ is attached along
    the \emph{minimal positive directed paths} in $\call G(\scr A)$ from
    $C$ to $\call{OP}(C,F)$.
  \end{enumerate}
\end{defi}
The reader can now easily convince her- or himself that condition
\eqref{eq:salvettiaffine} states exactly when a cell $e_{[F_1,C_1]}$
lies in the boundary of the cell $e_{[F_2,C_2]}$ in the unsubdivided Salvetti 
complex. In other words, the poset $\mbox{Sal}(\scr A)$ is the face
poset of the unsubdivided Salvetti complex (and hence $\call S$ is its
barycentric subdivision).

We close this section by noting that the coarser structure of the
unsubdivided complex has been used already in the seminal paper by
Salvetti \cite{salvetti1987tcr} to compute the fundamental group of
the complement of a complexified hyperplane arrangement. We will
return to this topic and review the techniques introduced by Salvetti
when we will compute our presentation for the fundamental group of
complexified toric arrangements.

\subsection{Acyclic categories}

Let us now introduce the idea of acyclic categories. We can think of acyclic categories as posets in which more than one relation
between two elements is allowed. Our main general reference for this topic is
Kozlov's book \cite{kozlov2007combinatorial} and, for specifics about
 actions of infinite groups, Babson and Kozlov's paper \cite{Babson2005439}.

\begin{defi}
  An \emph{acyclic category} is a small category $C$, 
  such that:
  \begin{enumerate}[(i)]
  \item the only morphisms that have inverses are the identities;
  \item the only morphism from an object to itself is the identity.
  \end{enumerate}
\end{defi}
We will write $\call O(C)$ for the objects of $C$ and $\call M(C)$ for
its morphisms.

Acyclic categories occur sometimes in the literature as ``loop-free categories'' 
or ``scwol''s (small category without loops, cfr. \cite{bridson1999metric}).

\subsubsection{The nerve}
To an acyclic category we can associate its \emph{nerve}. This is the
generalization of the order complex of a poset. Meaning that, if the
category is indeed a poset (that is, between two arbitrary objects there
is at most a morphism), then its nerve is indeed its order complex.
In general, however, the nerve of an acyclic category will not be a simplicial
complex. Instead it will be a \emph{regular trisp}. Trisps --also called
\emph{$\Delta$-complexes} in \cite{hatcher2002at}-- are a generalization
of simplicial complexes.

To define trisps we start with the notion of a \emph{polytopal
  complex}. This is, roughly speaking, a complex obtained gluing polytopal cells.
We will follow Kozlov's book (\cite[Definition 2.39]{kozlov2007combinatorial}),
except that we don't require polytopal complexes to be \emph{regular}.
More precisely:
\begin{defi}
  A \emph{polytopal complex} is a topological space $X$ obtained
  with the following construction:
  \begin{enumerate}[(i)]
  \item Start with the $0$-skeleton $X_0$, a discrete set of points.
  \item At the $k$-th step we attach all the $k$-dimensional faces.
    These are convex polytopes $P \subseteq \mat R^k$, attached
    along the maps $f:\partial P \to X_{k-1}$. The \emph{attaching maps}
    are required to be \emph{cellular}.
    Furthermore, the interior
    of each face of $P$ has to be attached homeomorphically to
    the interior of a face in $X_{k-1}$. The $k$-skeleton is defined as
    \begin{displaymath}
      X_{k} = \sx\bigsqcup P \sqcup X_{k-1}\dx/_{x \sim f(x)}
    \end{displaymath}
  \item We define $X = \cup_{k \in \mat N} X_k$.
  \end{enumerate}
\end{defi}

A {\em trisp} can be described then as a polytopal complex in which
every cell is a simplex. For more details about trisps and for the precise definition we refer to
\cite{kozlov2007combinatorial}.

Having introduced trisps, we can now define the nerve of an acyclic category.
\begin{defi}
  Let $C$ be an acyclic category; the \emph{nerve} $\Delta(C)$ is
  the trisp
  \begin{enumerate}[(i)]
  \item whose $k$-dimensional simplexes are $k$-length chains of
    composable morphisms 
    \begin{displaymath}
      \sigma = a_0 \stackrel{m_1}{\to} a_1
      \stackrel{m_2}{\to} a_2 \stackrel{m_3}{\to} \cdots \stackrel{m_k}{\to} a_k,
    \end{displaymath}
  \item where the boundary simplexes of a simplex $\sigma$ as above are defined as:
    \begin{align*}
      & \partial_0 \sigma = a_1 \stackrel{m_2}{\to} a_2 
      \stackrel{m_3}{\to} \cdots \stackrel{m_k}{\to} a_k\\
      & \partial_j \sigma = a_0 \stackrel{m_1}{\to} \cdots
      \stackrel{m_{j-1}}{\to} a_{j-1} \stackrel{m_{j+1}\circ m_j}{\to} 
      a_{j+1} \stackrel{m_{j+2}}{\to}\cdots \stackrel{m_k}{\to} a_k\\
      & \partial_k \sigma = a_0 \stackrel{m_1}{\to} a_1 \stackrel{m_2}{\to} a_2 
      \stackrel{m_3}{\to} \cdots \stackrel{m_{k-1}}{\to} a_{k-1}
    \end{align*}
  \end{enumerate}
\end{defi}

\subsubsection{Face category}
\label{sec:category}

Acyclic categories can be used to describe the topology
of a polytopal complex. For this section we refer to
\cite[III $\call C$.1]{bridson1999metric}.

\begin{defi}\label{def:facecat}
  Let $X$ be a polytopal complex; its \emph{face category}
  is the acyclic category $\call F(X)$
  \begin{enumerate}[(i)]
  \item whose set of objects $\call O(\call F(X))$ corresponds
    to the set of cells of $X$,
  \item where for every cell $P$ of $X$ and for every face $F$ of the polytope $P$
    there is a morphism $m_{P,F}:Q \to P \in \call M (\call F (X))$, where $Q$ is the face of $X$ upon
    which $F$ is glued,
  \item where if  $P_3 \stackrel{m_{P_2,F_2}}{\to} P_2 \stackrel{m_{P_1,F_1}}{\to} P_1$
    is a composable chain of morphisms in $\call F(X)$, then
    \begin{displaymath}
      m_{P_1,F_1} \circ m_{P_2,F_2} = m_{P_1,F'}
    \end{displaymath}
    (here $F'$ is the face of $F_1$ which is glued upon $F_2 \subseteq P_2$, 
     and hence upon $P_3$).
  \end{enumerate}
\end{defi}

\begin{oss}
  We notice that in point (iii) of definition \ref{def:facecat} the face
  $F'$ is uniquely determined, since the (restriction of the)
  gluing map $F_2 \to P_2$ is a cellular homeomophism.
\end{oss}

\begin{defi}
  The \emph{barycentric subdivision} of a polytopal complex $X$, is the regular
  trisp $\call B(X) = \Delta(\call F(X))$: the nerve of the face category.
\end{defi}

The face category describes the topology of a polytopal complex in the following
sense:
\begin{prop}
  Let $X$ be a polytopal complex, then the geometric realization of
  $\call B(X)$ is homeomorphic to $X$.
\end{prop}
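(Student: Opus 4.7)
The plan is to build the homeomorphism inductively on skeleta, bootstrapping from the classical case of a single convex polytope. For a single polytope $P$, viewed as a polytopal complex on its own, the objects of $\call F(P)$ are exactly the faces of $P$, morphisms $m_{P',F'}$ correspond to inclusions of faces, and rule (iii) of Definition \ref{def:facecat} reduces to intersection of subfaces. A chain of composable morphisms thus corresponds to a chain $F_0 \subseteq F_1 \subseteq \cdots \subseteq F_k$ of faces of $P$, and sending it to the geometric simplex spanned by the barycenters $b(F_0),\ldots,b(F_k)$ gives the standard homeomorphism $\Delta(\call F(P)) \to P$.

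Next I would induct on the skeletal dimension, producing homeomorphisms $\Phi_k : \Delta(\call F(X_k)) \to X_k$. The base case $k=0$ is immediate. For the inductive step, suppose $\Phi_{k-1}$ is given, and let $P$ be a $k$-cell glued via the cellular map $f:\partial P \to X_{k-1}$. The sub-trisp of $\Delta(\call F(X))$ consisting of chains that terminate at the object $P$ decomposes, via the composition rule (iii), into barycentric simplices indexed by chains $F_0 \subseteq \cdots \subseteq F_{k-1} \subseteq P$ of faces of $P$; I would map each such simplex affinely onto the corresponding geometric simplex in $P$. Doing this for every $k$-cell and gluing with $\Phi_{k-1}$ along the attaching maps produces a candidate map $\Phi_k$, and passing to the colimit yields $\Phi : \call B(X) \to X$.

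The main obstacle is checking that the candidate is well-defined and continuous across the attaching maps $f$, since $X$ is not assumed to be a regular polytopal complex. Concretely, a chain of morphisms in $\call F(X)$ ending at $P$ whose first morphism has the form $Q \to P'$ (coming from the attachment of a proper face $P' \subseteq P$ to the lower-dimensional cell $Q$) must be sent by $\Phi$ to a single well-defined point, whether one reads it as a simplex in the barycentric decomposition of $P$ sitting inside $F' \subseteq \partial P$, or, after applying $f$, as a simplex in $\call B(X_{k-1})$ sitting over $Q$. This compatibility is exactly what rule (iii) of Definition \ref{def:facecat} enforces, combined with the requirement in the definition of polytopal complex that $f$ be cellular and restrict to a homeomorphism on the interior of each face: the barycenter $b(F')$ is mapped by $f$ to $b(Q)$, and the whole affine simplex over $F'$ maps affinely and bijectively onto the simplex over $Q$. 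Once this is checked on barycenters, linearity extends it to full simplices; injectivity, surjectivity and continuity of $\Phi_k$ then follow from the corresponding properties of $\Phi_{k-1}$ and of the single-polytope homeomorphism, and because both $X$ and $\call B(X)$ carry the colimit topology with respect to their skeleta, the levelwise homeomorphisms assemble into the required homeomorphism $\Phi$.
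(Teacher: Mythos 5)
The paper itself does not prove this proposition; it states it and refers to \cite[III$\call C$]{bridson1999metric} for details, so I can only evaluate your argument on its own merits. Your overall strategy --- induct on skeleta, treat each cell via the barycentric subdivision of its defining polytope, and glue --- is sound and standard, and you correctly identify the delicate point as well-definedness across the attaching maps.

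However, the key compatibility claim is not justified and is in fact false in general: you assert that if $F'$ is a face of $P$ glued to the cell $Q$ of $X_{k-1}$, then ``the barycenter $b(F')$ is mapped by $f$ to $b(Q)$, and the whole affine simplex over $F'$ maps affinely and bijectively onto the simplex over $Q$.'' The definition of polytopal complex only requires the attaching map to be \emph{cellular} and to restrict to a \emph{homeomorphism} between the open face $\mathrm{int}(F')$ and the open cell $\mathrm{int}(Q)$; nothing forces it to be affine or barycenter-preserving. A generic attaching map will send $b(F')$ to an arbitrary interior point of $Q$, and two different faces $F',F''$ of $P$ (or of different $k$-cells) that glue onto the same $Q$ --- exactly the situation arising for non-regular complexes, which is the case you set out to cover --- will typically have their barycenters mapped to \emph{different} points of $\mathrm{int}(Q)$. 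Rule (iii) of Definition \ref{def:facecat} guarantees the combinatorial consistency of the face category (uniqueness of the composite morphism), but says nothing metric about barycenters. As a result, the map $\Phi_k$ you propose does not descend to the quotient: the image of the vertex $Q$ of the nerve is overdetermined, with contradictory prescriptions coming from $\Phi_{k-1}$ and from the barycentric subdivisions of the various $k$-cells whose boundary touches $Q$.

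The fix is to drop literal barycenters and instead carry the choice of interior points along in the induction. Choose once and for all, for each cell $Q$ of $X$, a point $p_Q\in\mathrm{int}(Q)$ (for $0$-cells this is the point itself). At step $k$, for a $k$-cell with polytope $P$ and characteristic map $\chi_P$, subdivide $P$ using, for each face $F$ of $P$, the point $\chi_P\vert_{\mathrm{int}(F)}^{-1}(p_{Q_F})$ where $Q_F$ is the cell onto which $F$ is glued; set the apex to any interior point of $P$ and let that define $p$ for the new $k$-cell. The standard fact that a polytope is triangulated by the simplices $\mathrm{conv}(q_{F_0},\dots,q_{F_j})$ over chains $F_0\subsetneq\dots\subsetneq F_j$ holds for \emph{any} choice of relative-interior points $q_F$, not just barycenters, so this still yields a homeomorphism $\Delta(\call F(P))\to P$; and now the compatibility with $\Phi_{k-1}$ holds by construction rather than by an unsupported affineness assumption. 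With that modification the rest of your inductive argument goes through.
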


These concepts have been already used in metric geometry and especially
in geometric group theory. There acyclic categories are called \emph{scwol}s,
the nerve of a category is called the \emph{geometric realization}
and the face category of a polytopal complex is called
the \emph{barycentric subdivision}. More details can be found in
\cite[III$\call C$]{bridson1999metric}.

\section{Toric arrangements}\label{sec:toric}
We will now introduce toric arrangements together with some
construction that will be needed in the following.

The $n$-dimensional {\em complex} torus is the space $(\mat C^*)^n$;
the $n$-dimensional {\em compact} torus is $(S^1)^n$.
A \emph{character} of a complex torus $T$ is an \emph{affine homomorphism}
$\chi: T \to \mat C^*$, i.e., a {\em Laurent polynomial} in
$\mat C[x_1^{\pm 1}, \dots x_n^{\pm 1}]$ that is also a group homorphism
with respect to the complex multiplication. One can easily see
that, then, $\chi$ is a Laurent {\em monomial} and for $x\in T$ we have
\begin{displaymath}
  \chi(x)=x_1^{\alpha_1} x_2^{\alpha_2} \cdots
  x_n^{\alpha_n}\quad\textrm{with}\quad
  \alpha = (\alpha_1, \dots, \alpha_n) \in \mat Z^n.
\end{displaymath}
The correspondence between  a character $\chi\in\Lambda$ and the
associated integer vector $\alpha_\chi$ makes the set of characters
into a lattice $\Lambda \cong \mat Z^n$ with the operation defined by
pointwis multiplication of characters.\\

The above, ``concrete'' definitions suffice for many purposes. 
It is however convenient for us and common in the literature to give a 
more abstract definition, starting with any (finitely generated) lattice $\Lambda$,
which will be our character lattice. We then define the corresponding
torus to be $$T_\Lambda := \Hom_{\mat Z}(\Lambda, \mat C^*).$$
Choosing a basis for $\Lambda$ gives an isomorphism $T_\Lambda \cong 
(\mat C^*)^{\mbox{rk}\,\Lambda}$ whose components are the
evaluation maps on the elements of the basis. Analogously, the
{\em compact} torus on the lattice $\Lambda$ is defined as 
$\Hom_{\mat Z}(\Lambda, S^1)$.

\begin{defi}\label{def:arrangement}
  A \emph{complexified toric arrangement} is a finite collection
  \begin{displaymath}
    \scr A = \{(\chi, a): \chi \in \Lambda,\, a \in S^1\},
  \end{displaymath}
where $\Lambda$ is a finitely generated lattice.
  We may think of $\scr A$ as the arrangement of the 
  hypersurfaces $H_{\chi,a} = \{x \in T_\Lambda:\, \chi(x) = a\}$, where
  $(\chi,a)$ runs over $\scr A$.

The {\em complement} of $\scr A$ is then
$$
M(\scr A) := (\mat C^*)^n \backslash \bigcup_{(\chi, a) \in \scr A}H_{\chi, a}.
$$
\end{defi}

\begin{oss}\label{oss:definizioni}
  Toric arrangements were first defined in \cite{de2005geometry} as sets of pairs 
  $(\chi, a)$ with $a \in \mat C^*$.
  Restricting the constants to $S^1$ allows for the same $\scr A$
  to define an arrangement of subtori on the compact torus
  $(S^1)^n$ (since a Laurent monomial maps $(S^1)^n$ on $S^1$). 
  The analogy with the case of complexified hyperplane arrangements
  motivates our terminology.  
  \end{oss}
\begin{defi} \label{def:D} Let $\scr A$ be a complexified toric arrangement.  With $\call D = \call D(\scr A)$
  we will denote the induced  cell-decomposition of the compact torus $(S^1)^n$.
\end{defi}
\begin{oss}
  On the other hand, \cite{mocitutte2009} and \cite{mocisettepanella} define
  a toric arrangement as an arrangement of kernels of characters (thus
  requiring $a = 1$). This cuts out a whole class of arrangements
  (e.g. $\scr A = \{t = -1, s = -1\}$ in $(\mat C^*)^2$).
  Moreover one can have hypersurfaces with many connected
  components, which are not in general kernels of characters (e.g. $t^2 = 1$).
\end{oss}

\begin{defi}
  A toric arrangement $\scr A$ on a $k$-dimensional
  torus $T_\Lambda$ is called \emph{essential} if
  \begin{displaymath}
    \mbox{rk}\,\scr A := 
    \mbox{rk}\,\left\langle\chi \in \Lambda:\,(\chi, a) \in \scr A
      \mbox{ for some } a \in S^1\right\rangle = k.
  \end{displaymath}
  This can be stated equivalently by saying that the layers of maximal codimension are points.
\end{defi}

\begin{oss}
  Consider a (non essential) arrangement $\scr A = \{(\chi_1,a_1), \dots,
  (\chi_n, a_n)\}$ with $\mbox{rk}\,\scr A = l < k$.
  Then there exists an essential arrangement $\scr A'$ 
  (the \emph{essentialisation} of $\scr A$) such that
  \begin{displaymath}
    M(\scr A) = M(\scr A') \times \sx\mat C^*\dx^{k - l}.
  \end{displaymath}
  With the notation of Definition \ref{defi:restriction},
  $\scr A' = \scr A_\Gamma$ where
  \begin{displaymath}
    \Gamma = \{\chi \in \Lambda:\quad \exists k \in \mat Z:\, \chi^k \in
    \gen{\chi_1, \dots, \chi_n}\}.
  \end{displaymath}

  In other words, it is not restrictive to consider essential arrangements. 
\end{oss}

\begin{assu}
 Unless otherwise stated, we will always assume our arrangement to
  be \emph{complexified} and {\em essential}.
\end{assu}

\begin{oss} As is the case in the theory of hyperplane arrangements, one of the goals of
the study of toric arrangements is to relate topological properties
of the complement 
$M(\scr A)$ 
to the \emph{combinatorics} of the arrangement $\scr A$. In the hyperplane case,
the combinatorics is expressed by the \emph{poset of intersections} 
$\call L(\scr A)$ of elements of $\scr A$. 
In the case of toric arrangements, the results of
\cite{de2005geometry} suggest that the right combinatorial
invariant may be the \emph{poset of layers} $\call C(\scr A)$, where a
{\em layer} is a connected component of an intersection of hypersurfaces
$H_{\chi, a}$, and the partial order is given by inclusion.

In the case of hyperplane arrangements, $\call L(\scr A)$ does not
suffice to determine the homotopy type of the complement:
indeed, there are explicit examples of arrangements with isomorphic
intersection poset, whose complements are not homeomorphic (see
\cite{Rybnikov}). In the case of a complexified real hyperplane arrangement,
the homeomorphism type of the complement is determined instead
by the \emph{face poset} of the induced (regular CW) decomposition
$\call D(\scr A)$ of $\mat R^n$. 
\end{oss}

In general, the homotopy type of a complexified toric arrangement
cannot be described in terms of the face poset of the induced
decomposition of the compact torus. Indeed Moci
and Settepanella in \cite{mocisettepanella} characterize exactly the arrangements
for which this poset describes the homotopy type of $M(\scr A)$: these are
the arrangements $\scr A$ for which $\call D(\scr A)$ is a regular
cell-complex or, in the terminology of \cite{mocisettepanella}, {\em
  thick} arrangements.\\

In our take at this matter we would like to keep full generality and therefore
suggest to replace the poset of faces with the following more general object.

\begin{defi} Let $\scr A$ be a complexified toric
  arrangement. Then $\call F(\scr A)$ will denote the \emph{face
    category} of the complex $\call D(\scr A)$  (see Definition
\ref{def:D}).
\end{defi}

\begin{oss} Thick arrangements are precisely those arrangement
for which the face category $\call F(\scr A)$ is a poset. For such arrangements
the construction of the Salvetti complex in the affine case translates almost 
literally to the toric case (see \cite{mocisettepanella} for the
details). 

Our construction is more general in the sense that it does not assume
thickness and, moreover, in the thick case it specializes to the
complex considered by Moci and Settepanella.
\end{oss}

\subsection{Restriction}

The operation of passing to sub arrangements, while intuitive and elementary
in the case of hyperplane arrangements, needs some
careful consideration in the toric case.\\

Let $\Gamma$ be a subgroup of the lattice $\Lambda$. 
Then $T_\Gamma := \Hom_{\mat Z}(\Gamma, S^1)$
is a compact ($\mbox{rk}\,\Gamma$)-torus and the inclusion 
$i_\Gamma:\Gamma \to \Lambda$ induces a surjection $\pi_\Gamma:
T_\Lambda \to T_\Gamma$ given by restriction: $\pi_\Gamma (p)=p_{\vert
\Gamma}$ . 

\begin{defi}\label{defi:restriction}
  Given a subgroup $\Gamma \subseteq
\Lambda$ and an arrangement $\scr A$ in $T_\Lambda$, we define the arrangement
\begin{displaymath}
  \scr A_{\Gamma} = \{(\chi, a) \in \scr A: \chi \in \Gamma\}.
\end{displaymath}
\end{defi}

\begin{prop}
  The map $\pi_\Gamma:T_\Lambda \to T_\Gamma$ induces a \emph{cellular map}
  $\pi^{\textrm{cell}}_\Gamma:\call D(\scr A) \to \call D(\scr A_\Gamma)$.
\end{prop}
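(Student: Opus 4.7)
The plan is to transfer the arrangement $\scr A_\Gamma$ from $T_\Gamma$ back to $T_\Lambda$ through $\pi_\Gamma$, to recognise the pullback as a sub-arrangement of $\scr A$, and to deduce cellularity from the ensuing refinement of cell decompositions. The pivotal identity is $\chi(\pi_\Gamma(p))=\chi(p|_\Gamma)=p(\chi)=\chi(p)$, which holds for every $\chi\in\Gamma$ and every $p\in T_\Lambda$. From it one reads off that, for each $(\chi,a)\in\scr A_\Gamma$, the preimage $\pi_\Gamma^{-1}(H_{\chi,a})$ in $T_\Lambda$ coincides with the hypersurface $H_{\chi,a}$ of $\scr A$ labelled by the same pair. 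Hence the arrangement $\scr A_\Gamma$ on $T_\Gamma$ pulls back to the sub-arrangement $\scr A_\Gamma\subseteq\scr A$ viewed inside $T_\Lambda$.

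Next, I would use that $\pi_\Gamma$ is a surjective homomorphism of compact abelian Lie groups (surjectivity follows from the injectivity of $\Gamma\hookrightarrow\Lambda$ together with divisibility of $S^1$), hence a fibre bundle with fibres the cosets of the compact subgroup $K:=\Hom_{\mat Z}(\Lambda/\Gamma,S^1)$. Since preimages of connected sets under such a fibration remain connected, $\pi_\Gamma^{-1}\call D(\scr A_\Gamma)$ agrees with the decomposition of $T_\Lambda$ induced by $\scr A_\Gamma$ (seen as an arrangement inside $T_\Lambda$). Because $\scr A\supseteq\scr A_\Gamma$, the decomposition $\call D(\scr A)$ refines $\pi_\Gamma^{-1}\call D(\scr A_\Gamma)$; consequently every cell $\sigma$ of $\call D(\scr A)$ lies inside a unique cell of the pullback decomposition, whose $\pi_\Gamma$-image is a unique cell $\tau$ of $\call D(\scr A_\Gamma)$. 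This assignment $\sigma\mapsto\tau$ is the candidate $\pi_\Gamma^{\textrm{cell}}$, and compatibility with the face relations of both face categories is inherited from the continuity of $\pi_\Gamma$.

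The delicate point, and where I expect the main obstacle to lie, is cellularity in the strict CW sense, i.e. the dimension inequality $\dim\tau\leq\dim\sigma$. For this I would exploit that $\sigma$ is open in an affine subtorus $L_\sigma\subseteq T_\Lambda$ of dimension $\dim\sigma$, and that $\pi_\Gamma$ restricted to $L_\sigma$ is a surjective Lie-group homomorphism onto an affine subtorus $\pi_\Gamma(L_\sigma)\subseteq T_\Gamma$ of dimension at most $\dim L_\sigma$. Using the description of layers through the character subgroups $M_\sigma=\langle \chi\in\Lambda : \sigma\subseteq H_{\chi,a}\rangle$, one identifies the layer of $\scr A_\Gamma$ containing $\pi_\Gamma(\sigma)$ with $\pi_\Gamma(L_\sigma)$ itself, whence $\dim\tau\leq\dim\sigma$. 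It is precisely this last step --- the identification of layers rather than the mere inclusion $\pi_\Gamma(L_\sigma)\subseteq\mbox{layer}$ --- where I anticipate the bulk of the technical work.
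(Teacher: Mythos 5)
Your route differs from the paper's. The authors do not reason abstractly with fibre bundles and layer dimensions: they invoke the elementary divisor theorem to pick a basis $x_1,\ldots,x_n$ of $\Lambda$ with $\Gamma=\langle x_1^{k_1},\ldots,x_l^{k_l}\rangle$, so that after the evaluation isomorphisms $\pi_\Gamma$ becomes the concrete map $(y_1,\ldots,y_n)\mapsto(y_1^{k_1},\ldots,y_l^{k_l})$. In this coordinate form continuity is trivial, and the hypersurfaces of $\scr A_\Gamma$ in the source visibly map onto the corresponding hypersurfaces in the target; the paper declares this ``hence cellular'' and stops. Your opening paragraph is essentially a coordinate-free restatement of the same key identity ($\pi_\Gamma^{-1}(H_{\chi,a})=H_{\chi,a}$ for $\chi\in\Gamma$), so the two arguments start from the same observation. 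What the adapted basis buys the authors is that the cellularity check reduces to inspecting one explicit monomial map; what your formulation buys is a statement that does not depend on a choice of basis.

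Two concrete points to flag. First, the assertion that ``preimages of connected sets under such a fibration remain connected'' is false whenever $\Lambda/\Gamma$ has torsion: the kernel $K=\Hom_{\mat Z}(\Lambda/\Gamma,S^1)$ then acquires a nontrivial finite factor and is disconnected (e.g.\ $\Gamma=2\mat Z\subset\mat Z$ gives $K\cong\mat Z/2\mat Z$), and the Smith-normal-form description in the paper shows this case genuinely occurs. Luckily you do not need that claim: to get the well-definedness of $\sigma\mapsto\tau$ it suffices that each individual open cell $\sigma^\circ$ of $\call D(\scr A)$ is connected and lies in a single stratum of the $\scr A_\Gamma$-stratification of $T_\Lambda$, which pushes forward to a single stratum (hence a single cell) of $\call D(\scr A_\Gamma)$; the global statement ``$\pi_\Gamma^{-1}\call D(\scr A_\Gamma)$ equals the $\scr A_\Gamma$-decomposition of $T_\Lambda$'' is both stronger than needed and, as stated, wrong. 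Second, the dimension inequality is indeed where the work lies, and your sketch leaves it open: the identification of the layer $L_\tau$ containing $\pi_\Gamma(\sigma)$ with $\pi_\Gamma(L_\sigma)$ is precisely what has to be proved, and the natural first step only gives $\pi_\Gamma(L_\sigma)\subseteq L_\tau$, which is the wrong inclusion for the bound you want. You are candid about this, but as written the proof is incomplete at exactly the step the word ``cellular'' hinges on; the paper's coordinate reduction is designed to make this step transparent, and that is the part of the paper's argument your proposal has not yet replaced.
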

\begin{proof}
  We can choose a basis $x_1,\ldots , x_n$ for $\Lambda$ such that
  $\Gamma = \langle x_1^{k_1}, \ldots , x_l^{k_l} \rangle$. The
isomorphism $T_\Lambda \simeq \mathbb C^n$ is given by evaluation on
the chosen basis:
$p\mapsto (p(x_1),\ldots p(x_n))$.
Therefore the projection $\sx\mathbb C^*\dx^n \rightarrow \sx\mathbb C^*\dx^l $ is
given by the map $(y_1, \ldots, y_n) \mapsto (y_1^{k_1}, \ldots, y_l^{k_l})$. 
This map is continuous and maps hypersurfaces (of $\scr
A_{\Gamma}\subseteq \scr A$ in $\sx\mathbb C^*\dx^n$) onto hypersurfaces (of
$\scr A_\Gamma$ in $\sx\mathbb C^*\dx^l$), hence is cellular.
\end{proof}

The construction of $\scr A_\Gamma$ is to be thought of as the analogue
of the quotient construction in \eqref{eq:quotienthyperplanes}. In
particular, given any face $F\in\call F (\scr A)$ we can let $\Gamma$
be the lattice 
\begin{displaymath}
\Lambda_{F}:=\{\chi\in\Lambda \mid \chi \textrm{ is constant on } F\}.
\end{displaymath}
Correspondingly, we obtain a toric subarrangement with an associated
cellular map:
\begin{equation}\label{eq:terminitorici}
\scr A_F:=\scr A_{\Lambda_F},\quad\quad 
\pi_F:=\pi_{\Lambda_F}^{\textrm{cell}}: \, \call{D}(\scr A) \rightarrow \call D(\scr A_F).
\end{equation}
The fact that $\pi_F$ is cellular implies that $\pi_F$ induces a
morphism of acyclic categories $\pi_F: \call F(\scr A) \rightarrow
\call F(\scr A_F)$.

\subsection{Covering spaces}\label{covering_spaces}

In order to connect the theory of toric arrangements to that of hyperplane
arrangements, we will look at a particular \emph{covering space} of 
a toric arrangament complement. Again, for our purposes it is convenient
to work with abstract tori.

Consider the following covering map
\begin{displaymath}
  \begin{array}{c}
    p: \Hom_{\mat Z}(\Lambda, \mat C) \to \Hom_{\mat Z}(\Lambda, \mat C^*)\\
    \varphi \mapsto \mbox{exp}\circ\varphi
  \end{array}
\end{displaymath}
where $\exp:\mat C \to \mat C^*$ is the exponential map, i.e., $ z \mapsto e^{2\pi i z}$. Notice
that $\Hom_{\mat Z}(\Lambda, \mat C) \cong \mat C^n$ and, through
this isomorphism, $p$ is just the universal covering map
\begin{displaymath}
  (t_1, \dots, t_n) \mapsto (e^{2\pi i t_1}, \dots, e^{2\pi i t_n})
\end{displaymath}
of the torus $T_\Lambda$.
Furthermore, $p$ restricts to a universal covering map 
$$\mat R^n \cong
\Hom_{\mat Z}(\Lambda, \mat R) \to \Hom_{\mat Z}(\Lambda, S^1) \cong
(S^1)^n$$ 
of the compact torus, under which the
preimage of a toric arrangement $\scr A$ is the (infinite)
affine hyperplane arrangement
\begin{displaymath}
  \tilde{\scr A} = \{(\chi,a') \in \Lambda \times \mat R
\mid (\chi,e^{2\pi i  a'}) \in \scr A\},
\end{displaymath}
or, in coordinates:
\begin{displaymath}
  \tilde{\scr A} = \{\gen{\alpha, x} =  a' \mid
  (x^\alpha, e^{2\pi i a'}) \in \scr A\}.
\end{displaymath}
Here $\alpha \in \mat Z^n$ and $x^\alpha$ is the associated character
$x_1^{\alpha_1} \cdots x_n^{\alpha_n}$. With this definition $p$ induces
a cellular map $p: \call D(\tilde{\scr A}) \to \call D(\scr A)$.\\

The arrangement $\tilde{\scr A}$ is a locally finite complexified
affine hyperplane arrangement and therefore admits a Salvetti complex
$$\tilde{\call S} = \tilde{\call S}(\scr A):=\call S(\tilde{\scr A}).$$ 
The character lattice $\Lambda$ acts cellulary on $\tilde{\call S}$
and continously on the covering space $M(\scr A)$. These two actions
are compatible, meaning that the embedding $\tilde{\call S} \to M(\tilde{\scr A})$ 
constructed in \cite{salvetti1987tcr} is $\Lambda$-equivariant (more precisely,
it can be so constructed).

\begin{figure}[tp]
  \centering
  \begin{tikzpicture}[scale = 1]
    \tikzstyle{every path} =
    [gray, line width = 1.5pt];
    \tikzstyle{every node} = 
    [circle, fill, inner sep=2pt, outer sep=3pt, draw];

    \draw[step = 2, thin] (-2,-2) grid (4,4);

    \draw node (A) at (0,1) {} node (B) at (1,0) {}
          node (C) at (2,1) {} node (D) at (1,2) {}
          node (E) at (-1,2) {} node (F) at (0,3) {}
          node (G) at (0,-1) {} node (H) at (-1,0) {}
          node (I) at (2,-1) {} node (L) at (3,0) {}
          node (J) at (3,2) {} node (K) at (2,3) {};
    
    \draw[->] (A) .. controls (0,.5) and (.5,0) .. (B);
    \draw[->] (D) .. controls (1,1.5) and (1.5,1) .. (C);
    \draw[->] (B) .. controls (1,.5) and (1.5,1) .. (C);
    \draw[->] (A) .. controls (0,1.5) and (.5,2) .. (D);
    \draw[->] (E) .. controls (-1,1.5) and (-.5,1) .. (A);
    \draw[->] (F) .. controls (0,2.5) and (.5,2) .. (D);
    \draw[->] (E) .. controls (-1,2.5) and (-.5,3) .. (F);
    \draw[->] (G) .. controls (0,-.5) and (.5,0) .. (B);
    \draw[->] (H) .. controls (-1,-.5) and (-.5,-1) .. (G);
    \draw[->] (H) .. controls (-1,.5) and (-.5,1) .. (A);
    \draw[->] (B) .. controls (1,-.5) and (1.5,-1) .. (I);
    \draw[->] (C) .. controls (2,.5) and (2.5,0) .. (L);
    \draw[->] (I) .. controls (2,-.5) and (2.5,0) .. (L);
    \draw[->] (C) .. controls (2,1.5) and (2.5,2) .. (J);
    \draw[->] (D) .. controls (1,2.5) and (1.5,3) .. (K);
    \draw[->] (K) .. controls (2,2.5) and (2.5,2) .. (J);

    \path [draw, white, line width = 10pt]
    (-2,-2) -- (4,4)   (-2, 0) -- (2,4) 
    (-2, 2) -- (0,4)   ( 0,-2) -- (4,2)
    ( 2,-2) -- (4,0)   ( 0,-2) -- (-2, 0)
    ( 2,-2) -- (-2, 2) ( 4,-2) -- (-2, 4)
    ( 4, 0) -- ( 0, 4) ( 4, 2) -- (2,4);

    \path [draw, red] (-2,-2) -- (4,4)
                      (-2, 0) -- (2,4) 
                      (-2, 2) -- (0,4)
                      ( 0,-2) -- (4,2)
                      ( 2,-2) -- (4,0);

    \path [draw, color=purple] ( 0,-2) -- (-2, 0)
                        ( 2,-2) -- (-2, 2)
                        ( 4,-2) -- (-2, 4)
                        ( 4, 0) -- ( 0, 4)
                        ( 4, 2) -- (2,4);

    \path [draw, white, line width = 10pt] 
    (B) .. controls (1,.5) and (.5,1) .. (A)
    (C) .. controls (2,1.5) and (1.5,2) .. (D)
    (C) .. controls (2,.5) and (1.5,0) .. (B)
    (D) .. controls (1,1.5) and (.5,1) .. (A)
    (A) .. controls (0,1.5) and (-.5,2) .. (E)
    (D) .. controls (1,2.5) and (.5,3) .. (F)
    (F) .. controls (0,2.5) and (-.5,2) .. (E)
    (B) .. controls (1,-.5) and (.5,-1) .. (G)
    (G) .. controls (0,-.5) and (-.5,0) .. (H)
    (A) .. controls (0,.5) and (-.5,0) .. (H)
    (I) .. controls (2,-.5) and (1.5,0) .. (B)
    (L) .. controls (3,.5) and (2.5,1) .. (C)
    (L) .. controls (3,-.5) and (2.5,-1) .. (I)
    (J) .. controls (3,1.5) and (2.5,1) .. (C)
    (K) .. controls (2,2.5) and (1.5,2) .. (D)
    (J) .. controls (3,2.5) and (2.5,3) .. (K);

    \draw[->] (B) .. controls (1,.5) and (.5,1) .. (A);
    \draw[->] (C) .. controls (2,1.5) and (1.5,2) .. (D);
    \draw[->] (C) .. controls (2,.5) and (1.5,0) .. (B);
    \draw[->] (D) .. controls (1,1.5) and (.5,1) .. (A);
    \draw[->] (A) .. controls (0,1.5) and (-.5,2) .. (E);
    \draw[->] (D) .. controls (1,2.5) and (.5,3) .. (F);
    \draw[->] (F) .. controls (0,2.5) and (-.5,2) .. (E);
    \draw[->] (B) .. controls (1,-.5) and (.5,-1) .. (G);
    \draw[->] (G) .. controls (0,-.5) and (-.5,0) .. (H);
    \draw[->] (A) .. controls (0,.5) and (-.5,0) .. (H);
    \draw[->] (I) .. controls (2,-.5) and (1.5,0) .. (B);
    \draw[->] (L) .. controls (3,.5) and (2.5,1) .. (C);
    \draw[->] (L) .. controls (3,-.5) and (2.5,-1) .. (I);
    \draw[->] (J) .. controls (3,1.5) and (2.5,1) .. (C);
    \draw[->] (K) .. controls (2,2.5) and (1.5,2) .. (D);
    \draw[->] (J) .. controls (3,2.5) and (2.5,3) .. (K);

    \path [fill = green, opacity = .5] (-1,0) 
    .. controls (-1,.5) and (-.5,1) .. (0,1) 
    .. controls (0,.5) and (.5,0) .. (1,0) 
    .. controls (.5,0) and (0,-.5) .. (0,-1)
    .. controls (-.5,-1) and (-1,-.5) .. (-1,0);

    \path [fill = green, opacity = .5] (-1,2) 
    .. controls (-1,2.5) and (-.5,3) .. (0,3) 
    .. controls (0,2.5) and (.5,2) .. (1,2) 
    .. controls (.5,2) and (0,1.5) .. (0,1)
    .. controls (-.5,1) and (-1,1.5) .. (-1,2);

    \path [fill = green, opacity = .5] (1,2) 
    .. controls (1,2.5) and (1.5,3) .. (2,3) 
    .. controls (2,2.5) and (2.5,2) .. (3,2) 
    .. controls (2.5,2) and (2,1.5) .. (2,1)
    .. controls (1.5,1) and (1,1.5) .. (1,2);

    \path [fill = green, opacity = .5] (1,0) 
    .. controls (1,.5) and (1.5,1) .. (2,1) 
    .. controls (2,.5) and (2.5,0) .. (3,0) 
    .. controls (2.5,0) and (2,-.5) .. (2,-1)
    .. controls (1.5,-1) and (1,-.5) .. (1,0);

    \path [fill = blue, opacity = .5] (1,0) 
    .. controls (1,.5) and (1.5,1) .. (2,1) 
    .. controls (2,1.5) and (1.5,2) .. (1,2) 
    .. controls (.5,2) and (0,1.5) .. (0,1)
    .. controls (.5,1) and (1,.5) .. (1,0);

  \end{tikzpicture}
  \caption{Salvetti Complex for $\tilde{\scr A}$}
  \label{fig:salvetticover}
\end{figure}

\begin{es}
  Figure \ref{fig:salvetticover} shows the Salvetti complex for
  the arrangement $\tilde{\scr A}$, with $\scr A = \{(ts,1), (ts^{-1},1)\}$.
  The green cells belong to the same $\Lambda$-orbit.
\end{es}

With the previous constructions in mind, we can now restate a key
result of \cite{mocisettepanella}.

\begin{prop}[{\cite[Lemma 1.1]{mocisettepanella}}]
  \label{prop:embeddingquoziente}
  Let $\scr A$ be an essential toric arrangement;
  the embedding $\tilde{\call S} \to M(\tilde{\scr A})$ induces
  an embedding
  \begin{displaymath}
    \tilde{\call S}/\Lambda \to M(\scr A)
  \end{displaymath}
  of the quotient $\tilde{\call S}$ in the complement $M(\scr A)$
  as a deformation retract.
\end{prop}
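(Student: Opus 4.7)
The plan is to lift everything to the universal cover $p: \Hom_{\mat Z}(\Lambda, \mat C) \to T_\Lambda$ and then pass to the $\Lambda$-quotient. By construction $p$ restricts to a regular covering $M(\tilde{\scr A}) \to M(\scr A)$ with deck group $\Lambda$ acting by integer translations on $\mat C^n$. It therefore suffices to show that (i) $\Lambda$ acts freely and properly discontinuously on $\tilde{\call S}$; (ii) the Salvetti embedding $\iota: \tilde{\call S} \hookrightarrow M(\tilde{\scr A})$ of \cite{salvetti1987tcr} is $\Lambda$-equivariant; (iii) the associated deformation retraction of $M(\tilde{\scr A})$ onto $\iota(\tilde{\call S})$ may be chosen $\Lambda$-equivariant. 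Granted (i)--(iii), the induced map $\bar\iota: \tilde{\call S}/\Lambda \to M(\scr A)$ is a well-defined continuous injection, and the descended retraction realizes it as a deformation retract.

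For (i), the $\Lambda$-action on $\mat R^n$ by integer translations preserves $\tilde{\scr A}$ (its defining equations $\gen{\alpha, x} = a'$ are stable under $x \mapsto x + \lambda$ after a compensating integer shift of $a'$), and hence permutes the faces of $\call D(\tilde{\scr A})$ and therefore the cells of $\tilde{\call S}$. Freeness and proper discontinuity are inherited from the translation action on $\mat R^n$. Essentiality of $\scr A$ enters here to guarantee that the chambers of $\tilde{\scr A}$ are bounded polytopes, so that $\tilde{\call S}/\Lambda$ is a finite CW-complex that can sit inside the complement.

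For (ii), Salvetti's embedding is built canonically from the local affine geometry of $\tilde{\scr A}$: the vertex $[F,C]$ is mapped to a point of the form $x_F + i\, y_C$ where $x_F$ lies in the relative interior of $F$ and $y_C$ is a small imaginary vector pointing into $C$, and higher-dimensional cells are filled in by canonical interpolation. All of these choices are dictated by the affine combinatorics of $\tilde{\scr A}$, which is preserved by integer translations, so $\iota$ commutes with the $\Lambda$-action.

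The main obstacle is (iii). Salvetti's deformation retraction $r_t: M(\tilde{\scr A}) \to M(\tilde{\scr A})$ is constructed piecewise out of maps defined in a neighbourhood of each face $F$, each of which depends only on the local affine stratification around $F$. Since $\Lambda$ acts by affine isomorphisms permuting this stratification, one may select the local retractions compatibly across $\Lambda$-orbits (for instance by picking one representative per orbit and transporting it), yielding a genuinely $\Lambda$-equivariant $r_t$. Passing to the quotient yields a deformation retraction $\bar r_t: M(\scr A) \to M(\scr A)$ onto $\bar\iota(\tilde{\call S}/\Lambda)$, as required.
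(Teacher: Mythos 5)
The paper does not supply its own proof of this proposition; it cites \cite[Lemma 1.1]{mocisettepanella}, whose argument (as the paper's remark after the statement indicates) proceeds by constructing an explicit homotopy inverse $\psi : \tilde{\call S}/\Lambda \to M(\scr A)$. Your strategy is different: equivariantize Salvetti's embedding and deformation retraction, then pass to the $\Lambda$-quotient. That route is reasonable and consistent with the paper's setup remark that the embedding $\tilde{\call S} \to M(\tilde{\scr A})$ can be constructed so as to be $\Lambda$-equivariant.

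However, step (iii) has a genuine gap, and you misplace the role of essentiality. You claim the retraction can be made equivariant by ``picking one representative per orbit and transporting it,'' but you do not address the consistency constraints this entails: a retraction chosen on a representative cell must agree, on its boundary faces, with what transport dictates from neighbouring translates, and those faces are themselves translates of lower-dimensional representatives; nothing you say ensures this. The observation that actually makes (ii) and (iii) work is the one you tuck into step (i): essentiality forces every face of $\tilde{\scr A}$ to be a bounded polytope, and it is exactly this boundedness that lets Salvetti's embedding and retraction be specified by canonical (barycentric) choices, which are then translation-equivariant with no ad hoc transport. Without it no $\Lambda$-equivariant retraction onto $\tilde{\call S}$ can exist at all: for the empty (hence non-essential) arrangement in $(\mat C^*)^n$ one has $M(\tilde{\scr A})=\mat C^n$ and $\tilde{\call S}$ a single point, and a $\Lambda$-equivariant retraction onto a point would require that point to be $\Lambda$-fixed. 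As written, your argument invokes essentiality only for finiteness of the quotient complex and would seem to apply verbatim to that example, where the conclusion is false; this is the sign that the equivariance of the retraction is asserted rather than established.
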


\begin{oss}
  In the proof of Proposition \ref{prop:embeddingquoziente} given in \cite{mocisettepanella} the hypotesis  of essentiality is  required. Indeed the construction of the homotopy
  inverse $\psi:\tilde{\call S}/\Lambda \to M(\scr A)$ does not work
  for non-essential arrangements.
\end{oss}

\section{Toric Salvetti complex}
\label{sec:salvetti}

We now head towards the first main theorem of this paper, introducing 
the notion of Salvetti complex for general complexified toric
arrangements with a construction that specializes to the complex of
\cite{mocisettepanella} in the case of thick arrangements.

\begin{defi}[Salvetti category]\label{df:salvetticategory}
  Let $\scr A$ be a toric arrangement on $(\mat C^*)^n$. The
  \emph{Salvetti Category} of $\scr A$ is the acyclic category
  $\zeta = \zeta(\scr A)$ defined as follows:
  \begin{enumerate}[(i)]
  \item the objects are the morphisms in $\call F(\scr A)$ between faces and
    chambers
    \begin{displaymath}
      \call O(\zeta) = \{m: F \to C:\quad m \in \call M(\call F(\scr A)),
      \, C \mbox{ chamber}\};
    \end{displaymath}
  \item for every morphism $n: F_2 \to F_1$ in $\call F(\scr A)$,
    and for every pair $m_1:F_1 \to C_1$, $m_2:F_2 \to C_2$ in $\call O(\zeta)$
    there is a morphism $(n,m_1,m_2): m_1 \to m_2$ if and only if
    \begin{equation}\label{eq:condition}
      \pi_{F_1}(m_1) = \pi_{F_1}(m_2);
    \end{equation}
    where $\pi_{F_1}$ is the morphism of face categories induced by the
    cellular map in \eqref{eq:terminitorici};
  \item let $m_i:F_i \to C_i$ for $i = 1,2,3$ be elements in $\call O(\zeta)$,
    suppose the pairs $(m_1, m_2)$ and $(m_1, m_3)$ satisfy condition
    \eqref{eq:condition}, then the pair $(m_1, m_3)$ satisfies the same condition
    and we can define for morphisms $n:F_2 \to F_1$, $n':F_3 \to F_2$
    the composition
    \begin{displaymath}
      (n',m_2,m_3)\circ(n,m_1,m_2) = (n\circ n',m_1,m_3).
    \end{displaymath}
  \end{enumerate}
\end{defi}

\begin{defi}
  Let $\scr A$ be a toric arrangement; its \emph{Salvetti complex}
  is the nerve $\Delta(\zeta(\scr A))$.
\end{defi}

We can now state the main theorem of this section.

\begin{thmA}
\label{teo:teoremone}
  Let $\Lambda$ be a lattice and $\scr A$ be a complexified toric
  arrangement in $T_\Lambda$. The nerve
  $\Delta(\zeta(\scr A))$ embeds in $M(\scr A)$ as a deformation retract.
\end{thmA}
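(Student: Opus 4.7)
My plan is to identify the nerve $\Delta(\zeta(\scr A))$ with the quotient $\tilde{\call S}/\Lambda$ of the affine Salvetti complex of the covering arrangement, and then invoke Proposition~\ref{prop:embeddingquoziente}.

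First I would unpack the setup of Section~\ref{covering_spaces}. The character lattice $\Lambda$ acts by translations on $\mat R^n\cong\Hom_{\mat Z}(\Lambda,\mat R)$, preserving $\tilde{\scr A}$, and thereby acts cellularly and freely on the affine Salvetti poset $\mbox{Sal}(\tilde{\scr A})$ from Definition~\ref{def:salvettiposetaffine} (whose nerve is $\tilde{\call S}$). Since the action is free and is by functors of acyclic categories, the quotient $\mbox{Sal}(\tilde{\scr A})/\Lambda$ is again an acyclic category and its nerve coincides with $\tilde{\call S}/\Lambda$; this is the scwol framework of \cite{Babson2005439}.

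The heart of the argument is to construct an isomorphism of acyclic categories $\Phi:\mbox{Sal}(\tilde{\scr A})/\Lambda\to\zeta(\scr A)$. On objects, a representative $[\tilde F,\tilde C]$ projects under $p$ to a face $F$ and a chamber $C$ of $\call D(\scr A)$, while the specific inclusion $\tilde F\leq \tilde C$ selects a morphism $m:F\to C$ in the face category $\call F(\scr A)$; since $p$ is a local homeomorphism near faces of $\tilde{\scr A}$, any two $\Lambda$-translates of $[\tilde F,\tilde C]$ determine the same $m$, and every object of $\zeta(\scr A)$ arises this way. On morphisms, a covering relation $[\tilde F_1,\tilde C_1]\leq [\tilde F_2,\tilde C_2]$ in $\mbox{Sal}(\tilde{\scr A})$ yields an inclusion $\tilde F_2\leq\tilde F_1$ that projects to a morphism $n:F_2\to F_1$ in $\call F(\scr A)$, and I would verify that the affine condition $\pi_{\tilde F_1}(\tilde C_1)=\pi_{\tilde F_1}(\tilde C_2)$ descends to condition~\eqref{eq:condition} by observing that $p$ restricts to a covering of the quotient arrangements $\tilde{\scr A}_{\tilde F_1}\to\scr A_{F_1}$ implicit in the two projections. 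Compatibility of compositions then follows from rule~(iii) in Definition~\ref{df:salvetticategory}.

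Putting these pieces together one obtains $\Delta(\zeta(\scr A))\cong\Delta(\mbox{Sal}(\tilde{\scr A})/\Lambda)\cong\tilde{\call S}/\Lambda$, and Proposition~\ref{prop:embeddingquoziente} delivers the embedding as a deformation retract into $M(\scr A)$. The most delicate step, where I expect the bulk of the bookkeeping, is the passage from the Salvetti-poset condition on lifts to condition~\eqref{eq:condition}: in the toric setting $\call F(\scr A)$ is genuinely not a poset, so several morphisms $F\to C$ may exist, and one must carefully keep track of which lift is being used when comparing $\pi_{\tilde F_1}$ with $\pi_{F_1}$. This is precisely why Definition~\ref{df:salvetticategory} takes morphisms (rather than pairs $(F,C)$) as objects, and it is the point at which the construction properly generalises that of \cite{mocisettepanella}.
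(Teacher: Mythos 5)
Your strategy coincides exactly with the paper's: identify $\Delta(\zeta(\scr A))$ with $\tilde{\call S}/\Lambda$ by (a) showing $\zeta(\scr A)\cong\mbox{Sal}(\tilde{\scr A})/\Lambda$ as acyclic categories, (b) showing the nerve commutes with the quotient, and (c) invoking Proposition~\ref{prop:embeddingquoziente}. Two points deserve sharpening, one of which is a genuine gap.

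The minor one: step (b) is not a freebie of ``the scwol framework.'' Babson and Kozlov's Theorem~3.4 gives a necessary and \emph{sufficient} condition for $\Delta(\call C/G)\cong\Delta(\call C)/G$, and the paper explicitly verifies it (Lemma~\ref{lemma:galoiscondition}). Your argument survives because the $\Lambda$-action is by deck transformations of a covering of categories (equivalently, is free on objects), and the two morphisms $m_a,m_b$ in the Babson--Kozlov condition share a source, forcing $g=e$; but this check must appear, not be asserted as automatic.

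The substantive one: your phrase ``$p$ restricts to a covering of the quotient arrangements $\tilde{\scr A}_{\tilde F_1}\to\scr A_{F_1}$'' conflates two different arrangements that the paper takes pains to distinguish. The affine Salvetti poset is defined via the linear quotient $\tilde{\pi}_{F_1}$, whose target is the \emph{subarrangement} $(\tilde{\scr A})_{F_1}$ of hyperplanes of $\tilde{\scr A}$ through $\tilde F_1$. On the other hand, $p$ covers $\scr A_{F_1}$ by the \emph{lifted toric} arrangement $\tilde{\scr A_{F_1}}$, which strictly contains $(\tilde{\scr A})_{F_1}$ (it includes all $\Lambda_{F_1}$-translates). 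The map carrying one quotient condition to the other is the support map $s:\call D(\tilde{\scr A_{F_1}})\to\call D((\tilde{\scr A})_{F_1})$, and the equivalence
\[
\tilde{\pi_{F_1}}(C_1)=\tilde{\pi_{F_1}}(C_2)\;\Longleftrightarrow\;\tilde{\pi}_{F_1}(C_1)=\tilde{\pi}_{F_1}(C_2)
\]
is \emph{not} automatic: the forward direction uses $\tilde{\pi}_F = s\circ\tilde{\pi_F}$, and the reverse direction uses that $C_1$ and $C_2$ share the face $F_2$, so the ambient translate $\lambda\in\Lambda_{F_1}$ must vanish (this is the paper's Lemma preceding Corollary~\ref{cor:rel}). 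Without this lemma, the descent to condition~\eqref{eq:condition} that your argument hinges on does not go through. You correctly sensed that this is where the bookkeeping lives, but misattributed the difficulty to the non-poset nature of $\call F(\scr A)$ rather than to the restriction-versus-covering discrepancy.
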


\begin{oss}
  Being the nerve of an acyclic category, $\Delta(\zeta(\scr A))$ is a 
  regular trisp.
\end{oss}

\begin{oss} In the case of affine arrangements of hyperplanes, the Salvetti poset defined in Section \ref{sec:hyperplanes} is indeed the poset of
cells of a regular CW-complex, of which the (simplicial) Salvetti
complex is the barycentric subdivision. Earlier we have called this
the ``unsubdivided''  Salvetti complex. Our goal now is to describe a CW complex of which the nerve
$\Delta(\zeta)$ is the barycentric subdivision. This complex will not
be regular in general, but the resulting economy in terms of cells
will come in handy in the following considerations.

Let then $\scr A$ denote a toric arrangement. Every cell of the
unsubdivided Salvetti complex of $\tilde{\scr A}$ corresponds to the
topological closure of the star of a vertex $[F,C]$ of the subdivided
complex.  Because the projection $\mbox{Sal}(\tilde{\scr A})\to \zeta$
is a covering of categories, the interior of the star of any vertex of
the nerve $\Delta(\mbox{Sal}(\tilde{\scr A}))$ is mapped
homeomorphically to the interior of the star of its image. This gives
a canonical CW-structure on $\Delta(\zeta)$. 
The acyclic category $\zeta$ is precisely the
face category of the resulting CW complex.

In particular, the explicit determination of the boundary maps of this
complex is now reduced to a straightforward computation.
\end{oss}

Before we can get to the proof, some preparatory considerations are in order.

\subsection{Restriction vs. covering}

In order to proceed with the argument we still need to spend a few words
on the quotient construction of \eqref{eq:quotienthyperplanes} and its
toric analogue.

Let $F$ be a face of $\call D(\scr A)$ and let $\Lambda_F$ be the
sublattice of characters in $\Lambda$ that are constant on $F$. Every
$\varphi\in \Lambda_F$ is then constant on the affine subspace spanned
by $F$, which we write $y+ L$ for $y\in \mathbb R^n$ and $L$ a linear
subspace of $\mathbb R^n$: therefore $\varphi$ vanishes on $L$.
Then we have an isomorphism
\begin{equation}\label{eq:isoquoziente}
 \rho: \,\, \mat R^n/L \to \Hom_{\mat Z}(\Lambda_F, \mat R).
\end{equation}
Recall from \eqref{eq:terminitorici} the arrangement
\begin{displaymath}
  \scr A_F = \{(\chi,a) \in \scr A:\, \chi \in \Lambda_F\} \subseteq
  \scr A
\end{displaymath}
in $\Hom_{\mat Z}(\Lambda_F, \mat R)$. The isomorphism $\rho$ from 
\eqref{eq:isoquoziente} does not map the arrangement 
$(\tilde{\scr A})_F$ onto $\tilde{(\scr A_F)}$. Indeed
$\tilde{(\scr A_F)}$ contains all the translates of the hyperplanes in $(\tilde{\scr A})_F$. That is
\begin{displaymath}
(\tilde{\scr A})_F \subseteq  \tilde{\scr A_F} = \{(\chi, a + k) \mid (\chi, a) \in 
({\scr A}^\upharpoonright)_F,\, k \in \mat Z\}
\end{displaymath}
and therefore we have a natural cellular support map
$$
s: \call D({\tilde{\scr A_F}}) \to  \call D(\tilde{\scr A}_F)
$$
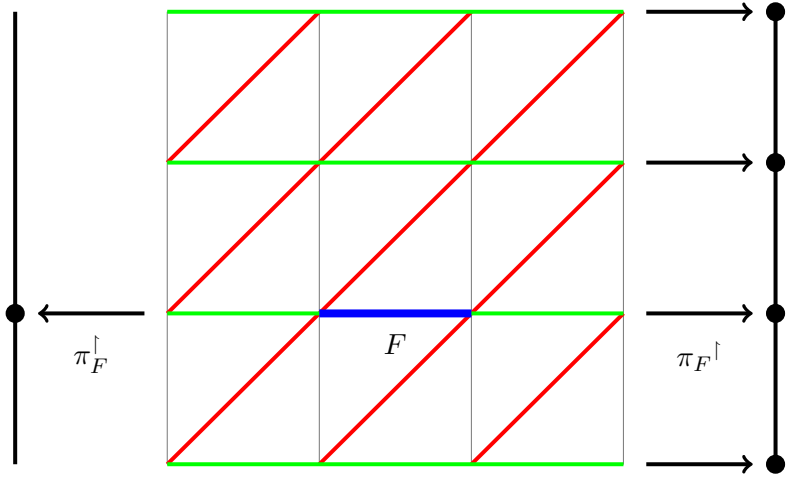
\begin{figure}[tp]
  \centering
  \begin{tikzpicture}[scale = 1]
    \tikzstyle{every path} =
    [black, line width = 1.5pt];
    \tikzstyle{every node} = 
    [circle, fill, inner sep=2pt, outer sep=3pt, draw];

    \draw[step = 2, gray, thin] (-2,-2) grid (4,4);

    \path [draw, red] (-2,-2) -- ( 4, 4)    (-2, 0) -- ( 2, 4) 
                      (-2, 2) -- ( 0, 4)    ( 0,-2) -- ( 4, 2)
                      ( 2,-2) -- ( 4, 0);

    \path [draw, green] (-2,-2) -- ( 4,-2)   (-2, 0) -- ( 4, 0)
                       (-2, 2) -- ( 4, 2)   (-2, 4) -- ( 4, 4); 

    \path [draw, blue, line width = 3pt] ( 0, 0) -- 
    node [draw = none, fill = none, black, below] {$F$} ( 2, 0);

    \path node (A) at ( 6,-2) {}    node (B) at ( 6, 0) {} 
          node (C) at ( 6, 2) {}    node (D) at ( 6, 4) {};
    \path [draw] (6,-2) -- (6,4);
      
    \draw [->] (4.3, 4) -- (5.7, 4);  \draw [->] (4.3,2) -- (5.7,2);
    \draw [->] (4.3,0) -- 
    node [draw = none, fill = none, below] {$\tilde{\pi_{F}}$} (5.7,0);
    \draw [->] (4.3,-2) -- (5.7,-2);

    \path node (E) at (-4,0) {};
    \path [draw] (-4,-2) -- (-4,4);
      
    \draw [->] (-2.3,0) -- 
    node [draw = none, fill = none, below] {$\tilde{\pi}_{F}$} (-3.7,0);
  \end{tikzpicture}
  \caption{Restriction vs. Covering}
  \label{fig:restrictions}
\end{figure}

The map $\pi_F$ of \eqref{eq:terminitorici} lifts (via $p$) to a map $\mathbb R ^{rk
  \Lambda} \rightarrow \mathbb R^{rk \Lambda_F}$ which induces a cellular map
$
\tilde{\pi_F}:\, \call D(\tilde{\scr A})\to \call D(\tilde{(\scr A_F)})
$
and the following diagram commutes
\begin{equation}\label{diagrammacellulare}
    \xymatrix{
      \call D(\tilde{\scr A}) \ar[r]^{\tilde{\pi_{F}}} \ar[d]_p &
      \call D(\tilde{(\scr{A}_F)}) \ar[d]^p\\
      \call D(\scr A) \ar[r]_{\pi_F} & 
      \call D(\scr A_F)
    }
  \end{equation}

On the other hand, in $\Hom(\Lambda, \mathbb R)$ we have the
projection from \eqref{eq:salvettiaffine}, which we call
$\tilde{\pi}_F$ and in terms of
which the Salvetti complex of $\tilde{\scr A}$ is defined, which is
$$
\tilde{\pi}_F: \call D(\tilde{\scr A}) \to \call D((\tilde{\scr A})_F)
$$
and is related to $\tilde{\pi_F}$ via
$$\tilde{\pi}_F = s\circ \tilde{\pi_F}.$$

Figure \ref{fig:restrictions} shows an example of projections
$\tilde{\pi}_F$ and $\tilde{\pi_F}$.

\begin{lemma} Let $F_1,F_2,C_1,C_2\in\call F(\tilde{\scr A})$ with
  $C_1, C_2$ chambers, $F_1\leq C_1$ and $F_1\leq F_2 \leq C_2$. Then
$$
\tilde{\pi_{F_1}}(C_1) = \tilde{\pi_{F_1}}(C_2) \iff \tilde{\pi}_{F_1}(C_1)=\tilde{\pi}_{F_1}(C_2).
$$
\end{lemma}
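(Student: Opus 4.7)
The forward implication is immediate from the identity $\tilde{\pi}_{F_1} = s \circ \tilde{\pi_{F_1}}$ recorded just before the lemma, so the content of the statement is the converse. My plan is to establish that under the given face-incidence hypotheses, the support map $s$ is injective on the set of chambers of $\tilde{\scr A_{F_1}}$ having $p := \tilde{\pi_{F_1}}(F_1)$ in their closure; equality after applying $s$ will then force equality before.

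The central local observation is that the hyperplanes of $\tilde{\scr A_{F_1}}$ passing through $p$ are exactly the (images under $\rho$ of the) hyperplanes of $(\tilde{\scr A})_{F_1}$, while the remaining hyperplanes of $\tilde{\scr A_{F_1}}$ are integer translates thereof and so stay at positive distance from $p$. Thus there is a small open ball $B$ around $p$ in which the two arrangements induce the same (central) cellular decomposition. The hypotheses $F_1 \leq C_1$ and $F_1 \leq F_2 \leq C_2$ put $F_1$ in the closures of both $C_1$ and $C_2$, and since $\tilde{\pi_{F_1}}$ collapses the affine span of $F_1$ to the single point $p$, continuity gives $p \in \overline{\tilde{\pi_{F_1}}(C_i)}$ for $i = 1, 2$. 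A short path argument then finishes the proof: any chamber $D$ of $\tilde{\scr A_{F_1}}$ with $p$ in its closure agrees inside $B$ with a unique chamber $K$ of the central arrangement $(\tilde{\scr A})_{F_1}$, and since no path contained in $D$ can cross a hyperplane of $(\tilde{\scr A})_{F_1} \subseteq \tilde{\scr A_{F_1}}$, we conclude $D \subseteq K$ and hence $s(D) = K$. Different local cones at $p$ yield different coarse chambers, so $s$ is injective on chambers adjacent to $p$; specializing to $D_i = \tilde{\pi_{F_1}}(C_i)$ gives the desired implication.

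The step I expect to require the most care is checking that $\tilde{\pi_{F_1}}(C_i)$ is genuinely a single chamber of $\tilde{\scr A_{F_1}}$ with $p$ in its closure, and not a lower-dimensional cell or a union of chambers. This should follow from the face-incidence $F_1 \leq C_i$: near $F_1$ the chamber $C_i$ fills out an open half-cone on a specific side of each hyperplane through $F_1$, and after quotienting by the affine direction $L$ of $F_1$ this cone becomes a full-dimensional open set at $p$ contained in a single chamber of the finer arrangement. The intermediate face $F_2$ is used only to guarantee $F_1 \leq C_2$ by transitivity; beyond that it plays no explicit role in the argument.
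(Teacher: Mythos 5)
Your argument is correct and rests on the same geometric fact as the paper's proof: near $p = \tilde{\pi_{F_1}}(F_1)$ the periodic arrangement $\tilde{\scr A_{F_1}}$ agrees with the central arrangement $(\tilde{\scr A})_{F_1}$, so a chamber of the former incident to $p$ is uniquely determined by its support. The paper compresses this to the statement that $\tilde{\pi_{F_1}}(C_1)$ and $\tilde{\pi_{F_1}}(C_2)$ differ by some $\lambda \in \Lambda_{F_1}$ and that $\lambda = 0$ because $F_1$ is a common face of $C_1$ and $C_2$ (it writes $F_2$ where $F_1$ is meant); your local-ball and path argument is an explicit unpacking of that one-line step.
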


\begin{proof} The direction $\Rightarrow$ follows since $\tilde{\pi}_F = s\circ \tilde{\pi_F}$. For $\Leftarrow$: if $\tilde{\pi}_{F_1}(C_1) = \tilde{\pi}_{F_1}(C_2)$, then
$\tilde{\pi_{F_1}}(C_1) = \tilde{\pi_{F_1}}(C_2 + \lambda)$, for
some $\lambda \in \Lambda_F$. But since
$F_2$ is a common face of $C_1$ and $C_2$, it has to be
$\lambda = 0$.
\end{proof}

\begin{cor}\label{cor:rel} Let $[F_1, C_1]$, $[F_2, C_2]$ denote two
  elements of $\mbox{Sal}\,\tilde{\scr A}$, 
  the Salvetti poset of $\tilde{\scr A}$. Then
  $$
  [F_1, C_1] \leq [F_2, C_2]  \iff F_1 \geq F_2 \textrm{ in } 
  \mathcal{F}(\scr A) \textrm{ and } \tilde{\pi_{F_1}}(C_1) = \tilde{\pi_{F_1}}(C_2) 
  $$
\end{cor}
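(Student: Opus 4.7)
The plan is to derive the corollary as an immediate reformulation of Definition \ref{def:salvettiposetaffine} (applied to the affine arrangement $\tilde{\scr A}$), using the preceding lemma to pass between the two flavors of projection. By that definition we have
$$[F_1, C_1] \leq [F_2, C_2] \iff F_2 \leq F_1 \text{ in } \call F(\tilde{\scr A}) \text{ and } \tilde{\pi}_{F_1}(C_1) = \tilde{\pi}_{F_1}(C_2),$$
since the projection $\pi_{F_1}$ of \eqref{eq:salvettiaffine} is precisely $\tilde{\pi}_{F_1}$ in the notation introduced above. The corollary differs only in that $\tilde{\pi}_{F_1}$ is replaced by $\tilde{\pi_{F_1}}$, and the preceding lemma asserts exactly that this substitution is legal.

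To invoke the lemma one has to verify its hypotheses: $F_1$ is a face of $C_1$ (automatic, as $[F_1,C_1]$ is an element of the Salvetti poset) and there exists $F'$ with $F_1 \leq F' \leq C_2$. The latter is the one point requiring a small observation, because the hypothesis $F_2 \leq F_1$ does not of itself give $F_1 \leq C_2$. However, the projection condition $\tilde{\pi}_{F_1}(C_1) = \tilde{\pi}_{F_1}(C_2)$ forces $F_1$ to lie in the closure of $C_2$: chambers of the induced affine arrangement $(\tilde{\scr A})_{F_1}$ on the quotient space are in natural bijection with those chambers of $\tilde{\scr A}$ having $F_1$ in their closure, and the bijection is realized by $\tilde{\pi}_{F_1}$. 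Consequently one may take $F' = F_1$, and the chain $F_1 \leq F_1 \leq C_2$ satisfies the lemma's hypothesis trivially. The analogous remark handles the other direction, starting from $\tilde{\pi_{F_1}}(C_1) = \tilde{\pi_{F_1}}(C_2)$ and using the cellularity of the toric restriction to conclude $F_1 \leq C_2$ in that case too.

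Once these hypotheses are secured, the preceding lemma delivers the equivalence $\tilde{\pi}_{F_1}(C_1) = \tilde{\pi}_{F_1}(C_2) \iff \tilde{\pi_{F_1}}(C_1) = \tilde{\pi_{F_1}}(C_2)$, and substituting this into the definition recalled above yields precisely the statement of the corollary. The only delicate point I anticipate is the justification that the projection condition implies $F_1 \leq C_2$ — a standard local fact about affine hyperplane arrangements, but one that deserves to be spelled out in order to apply the lemma cleanly and to reconcile the ordering convention of the Salvetti poset (in which the larger face $F_1$ corresponds to the smaller element) with the labelling used in the lemma's hypotheses.
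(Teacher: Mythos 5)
The claim on which your argument hinges---that the projection condition $\tilde{\pi}_{F_1}(C_1) = \tilde{\pi}_{F_1}(C_2)$ forces $F_1 \leq C_2$---is false, and this is a genuine gap. It is true that $\tilde{\pi}_{F_1}$ restricts to a bijection between the chambers of $\tilde{\scr A}$ having $F_1$ in their closure and the chambers of $(\tilde{\scr A})_{F_1}$; but as a map defined on \emph{all} chambers of $\tilde{\scr A}$, $\tilde{\pi}_{F_1}$ is far from injective. Each fiber contains exactly one chamber adjacent to $F_1$, but typically many others besides, and $C_2$ may well be one of those others. Concretely, take $\tilde{\scr A} = \{x=k,\ y=k,\ x+y=k : k \in \mat Z\}$ in $\mat R^2$, let $F_1$ be the open segment from $(0,0)$ to $(1,0)$, $C_1$ the triangle with vertices $(0,0),(1,0),(0,1)$, and $C_2$ the triangle with vertices $(1,0),(2,0),(1,1)$. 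Then $[F_1,C_1] \leq [(1,0),C_2]$ in $\mbox{Sal}(\tilde{\scr A})$, and indeed $\tilde{\pi}_{F_1}(C_1) = \tilde{\pi}_{F_1}(C_2) = (0,\infty)$, yet $F_1 \not\leq C_2$. Consequently there is no $F'$ with $F_1 \leq F' \leq C_2$ (that would force $F_1 \leq C_2$), and the preceding lemma cannot be invoked the way you propose.

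The ingredient you are missing is that the hypothesis $F_2 \leq F_1 \leq C_1$ together with $F_2 \leq C_2$ must be exploited directly: since $\pi_{F_1}$ collapses $F_1$ to a vertex $v$, cellularity gives $\tilde{\pi_{F_1}}(F_2) \leq \tilde{\pi_{F_1}}(F_1) = v$, hence $\tilde{\pi_{F_1}}(F_2) = v$; then $F_2 \leq C_2$ gives $v \leq \tilde{\pi_{F_1}}(C_2)$, so $\tilde{\pi_{F_1}}(C_2)$ is also adjacent to $v$ (notice this is the role that is implicitly played in the lemma's proof by the phrase ``$F_2$ is a common face of $C_1$ and $C_2$''). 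Both $\tilde{\pi_{F_1}}(C_1)$ and $\tilde{\pi_{F_1}}(C_2)$ are then chambers of $\tilde{\scr A_{F_1}}$ adjacent to $v$ and lying in the same cone of the central subarrangement $(\tilde{\scr A})_{F_1}$ at $v$, which forces them to coincide. So the argument runs through $F_2$ and the vertex $v$, not through the false inference $F_1 \leq C_2$. In my example above, $F_2 = (1,0) \leq C_2$ does the work. You should also note that the inequality in the lemma's hypothesis ($F_1 \leq F_2$) runs opposite to what appears in the corollary ($F_2 \leq F_1$); reconciling this is part of what a careful proof needs to address, rather than sidestep by forcing $F' = F_1$.
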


\subsection{Quotients}

Our strategy for the proof of Theorem \ref{teo:teoremone} will be
to prove that the toric Salvetti  complex $\Delta(\zeta)$ is the
quotient of the action $\Lambda \agisce \tilde{\call S}$ in the
category of trisps. For this, we need first to take care of some
ground work.

\begin{lemma}\label{lemma:covering1}
  Let $\scr A$ be a complexified toric arrangement. Then  there is a covering $q:\call F(\tilde{\scr A}) \to \call F(\scr A)$
  of acyclic categories with Galois group $\Lambda$ and
$$\call F(\scr A) = \call F(\tilde{\scr A})/\Lambda$$  as a quotient of
acyclic categories.
\end{lemma}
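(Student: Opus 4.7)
The plan is to build $q$ directly from the universal cellular covering $p: \call D(\tilde{\scr A}) \to \call D(\scr A)$ discussed in Section \ref{covering_spaces}. Since $p$ is cellular, it induces a functor $q: \call F(\tilde{\scr A}) \to \call F(\scr A)$ between the face categories by sending each cell $\tilde{F}$ to $p(\tilde{F})$ and each morphism $m_{\tilde{F}, \tilde{G}}: \tilde{G} \to \tilde{F}$ of Definition \ref{def:facecat} to $m_{p(\tilde{F}), p(\tilde{G})}$; this is well-defined because $p$ is a local homeomorphism and therefore preserves the attaching data cell by cell.

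First I would analyse the action of the deck group. The lattice $\Lambda$ acts freely on $\Hom_{\mat Z}(\Lambda, \mat R) \cong \mat R^n$ by translation, this being precisely the deck group of $p$, and preserves $\tilde{\scr A}$ by construction. Hence it acts freely on $\call D(\tilde{\scr A})$ by cellular homeomorphisms, and consequently on $\call F(\tilde{\scr A})$ by automorphisms of acyclic categories. The equality $p \circ \lambda = p$ implies $q$ descends to a functor $\bar{q}: \call F(\tilde{\scr A})/\Lambda \to \call F(\scr A)$, and proving that $\bar{q}$ is an isomorphism of acyclic categories realizes the desired quotient description.

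Next I would verify that $\bar{q}$ is indeed an isomorphism. Bijectivity on objects is immediate: since $p$ is a covering with deck group $\Lambda$, cells of $\call D(\scr A)$ correspond exactly to $\Lambda$-orbits of cells of $\call D(\tilde{\scr A})$. For morphisms, fix a face relation $G \le F$ in $\call D(\scr A)$ and a lift $\tilde{F}$ of $F$: a sufficiently small neighborhood of the topological closure of $\tilde{F}$ in $\mat R^n$ maps homeomorphically onto a neighborhood of the closure of $F$ in $(S^1)^n$, so $G$ lifts uniquely to a face $\tilde{G}$ with $\tilde{G} \le \tilde{F}$. Distinct lifts of $F$ differ by an element of $\Lambda$, yielding the required bijection after passing to $\Lambda$-orbits.

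Finally, the covering property in the sense of \cite{Babson2005439} amounts to showing that, for each object $\tilde{F} \in \call O(\call F(\tilde{\scr A}))$, the functor $q$ restricts to bijections between the morphisms of $\call F(\tilde{\scr A})$ with source (respectively target) $\tilde{F}$ and the morphisms of $\call F(\scr A)$ with source (respectively target) $q(\tilde{F})$; this follows from the same local-homeomorphism argument. The main obstacle is the care required in the nonthick setting: in $\call F(\scr A)$ several distinct morphisms can exist between the same pair of objects, arising from different identifications of a face of an attached polytope, and one must check that each such morphism lifts uniquely together with its combinatorial attaching data. This is again guaranteed because $p$ is a local homeomorphism of polytopal complexes, so the combinatorics of the attaching maps near $\tilde{F}$ and $p(\tilde{F})$ coincide.
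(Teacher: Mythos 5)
Your strategy is the same as the paper's: construct $q$ from the cellular covering $p$, observe that $\Lambda$ acts as the deck group, verify that $q$ is a covering of acyclic categories, and conclude via the quotient. The paper's proof is terse (it simply notes that $\tilde F$ can serve as the polytopal model of $p(\tilde F)$ in Definition~\ref{def:facecat} and then invokes the appendix), so your proposal usefully supplies more of the missing verification.

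However, there is a genuine gap in the middle step. You claim ``a sufficiently small neighborhood of the topological closure of $\tilde F$ in $\mat R^n$ maps homeomorphically onto a neighborhood of the closure of $F$ in $(S^1)^n$,'' and you rely on this to produce the bijection on morphisms. That claim is false in exactly the case of interest. Take $\Lambda = \mat Z$, $\scr A = \{(t,1)\}$ on $S^1$: the chambers of $\tilde{\scr A}$ are the open intervals $(k,k+1)$, whose closures $[k,k+1]$ map onto $S^1$ by identifying the two endpoints. No neighborhood of $[k,k+1]$ maps injectively, and this is precisely the non-thick phenomenon. Consequently the assertion that ``$G$ lifts uniquely to a face $\tilde G \le \tilde F$'' is misleading: with $\tilde F$ fixed, a single cell $G$ of $\call D(\scr A)$ may be covered by several distinct faces of $\tilde F$, one for each morphism $G \to p(\tilde F)$ in $\call F(\scr A)$.

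You gesture at this in your last paragraph, but the resolution you give (``the combinatorics of the attaching maps near $\tilde F$ and $p(\tilde F)$ coincide'') is not accurate as stated: $\tilde F$ is attached regularly while $p(\tilde F)$ need not be, so the attaching maps are not combinatorially the same. The correct statement, and the one the paper uses, is that $p$ restricted to the interior of $\tilde F$ is a homeomorphism onto the interior of $p(\tilde F)$, so the polytope $\tilde F$ together with the composition $\partial \tilde F \hookrightarrow \call D(\tilde{\scr A}) \xrightarrow{p} \call D(\scr A)$ is a valid polytopal model for the cell $p(\tilde F)$. With that identification, the bijection between morphisms of $\call F(\tilde{\scr A})$ with target $\tilde F$ (faces of $\tilde F$) and morphisms of $\call F(\scr A)$ with target $p(\tilde F)$ (faces of the chosen polytopal model) is immediate, and the covering property follows. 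Replacing your neighborhood argument with this observation closes the gap.
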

\begin{proof}
  Let $F \in \call D(\tilde{\scr A})$ be a face of the affine arrangement
  $\tilde{\scr A}$. In particular $F$ is a polytope and
  $p(F) \in \call D(\scr A)$ is a face of $\scr A$. We
  can then use $F$ a polytopal model of $p(F)$ in Definition \ref{def:facecat}
  and map a morphism $F' \leq F$ to the corresponding morphism
  $m_{F',F}$.
  
  This defines a functor $q:\call F(\tilde{\scr A}) \to \call F(\scr A)$.
  Furthermore $q$ is a covering of categories in the sense of
  \cite[Definition A.15]{bridson1999metric} with $\Lambda$ as
  automorphism group and $\Lambda$ acts transitively on the fibers of
  $q$. It then follows\ifthenelse{\boolean{appendice}}{(see Proposition
  \ref{prop:rivestimentoquoziente} in the Appendix)}{}
  that $\call F(\scr A) \cong \call F(\tilde{\scr A})/\Lambda$.
\end{proof}

In particular, we note the following consequence.

\begin{cor}
 The morphisms in $\call F(\scr A)$ correspond to the orbits\\
  $\{\Lambda(F_1 \leq F_2) \mid \, F_1, F_2 \in \call D(\tilde{\scr A})\}.$
\end{cor}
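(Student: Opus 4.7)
The plan is to read off the corollary directly from Lemma~\ref{lemma:covering1}. Once we know that $\call F(\scr A)$ is the quotient acyclic category $\call F(\tilde{\scr A})/\Lambda$, the corollary reduces to the general fact that in such a quotient the morphism set is precisely the set of $\Lambda$-orbits of morphisms of the covering category. So the real work was already done in the lemma; nothing new about the geometry of $\scr A$ is needed.

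More concretely, first I would observe that $\tilde{\scr A}$ is a locally finite affine hyperplane arrangement, so its face category $\call F(\tilde{\scr A})$ is in fact just the face \emph{poset}: its morphisms are exactly the face incidences $F_1 \leq F_2$ with $F_1, F_2 \in \call D(\tilde{\scr A})$. The action of $\Lambda$ on $\Hom_{\mat Z}(\Lambda, \mat R) \cong \mat R^n$ by translation is free and permutes faces, hence acts on these pairs diagonally: $\lambda \cdot (F_1 \leq F_2) = (\lambda F_1 \leq \lambda F_2)$. Composing the quotient identification from Lemma~\ref{lemma:covering1} with this description of $\call M(\call F(\tilde{\scr A}))$ gives exactly the bijection claimed.

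The one point requiring care is the general assertion that for a covering of acyclic categories with deck group $\Lambda$ acting freely, one has $\call M(\mathcal C/\Lambda) = \call M(\mathcal C)/\Lambda$. This is the standard description of the quotient construction for scwols (cf.\ Babson--Kozlov \cite{Babson2005439} and \cite[Appendix~A]{bridson1999metric}) and is precisely the content invoked at the end of the proof of Lemma~\ref{lemma:covering1}; conditional on that result, there is nothing further to check.

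Thus the corollary will essentially be a one-line deduction, and the only potential obstacle — whether the quotient of the morphism set coincides with the morphism set of the quotient — is handled by results already cited. The statement is really an explicit parametrization of $\call M(\call F(\scr A))$ that will be convenient later, when we want to describe chains of composable morphisms in $\call F(\scr A)$ by lifting them to $\call F(\tilde{\scr A})$ and reading off representatives.
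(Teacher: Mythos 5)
Your argument matches the paper's intent exactly: the paper states the corollary with no proof, as an immediate consequence of Lemma~\ref{lemma:covering1}, which is precisely what you do. Your added remarks — that $\call F(\tilde{\scr A})$ is a poset whose morphisms are the incidences $F_1 \le F_2$, and that morphisms of the quotient scwol are $\Lambda$-orbits of morphisms — spell out the implicit reasoning correctly.
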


Now we can prove a key lemma, finally making sense of our definition of $\zeta$.

\begin{lemma}
  The category $\zeta$ is the quotient $\mbox{Sal}(\tilde{\scr A})/\Lambda$
  in the category of acyclic categories.
\end{lemma}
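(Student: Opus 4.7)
The plan is to construct a $\Lambda$-invariant functor $\phi: \mbox{Sal}(\tilde{\scr A}) \to \zeta$ (with $\Lambda$ acting trivially on the target) and then verify that the induced functor $\bar{\phi}: \mbox{Sal}(\tilde{\scr A})/\Lambda \to \zeta$ is an isomorphism of acyclic categories.

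On an object $[F,C]$ of $\mbox{Sal}(\tilde{\scr A})$ I would set $\phi([F,C]) := q(F \leq C)$, where $q: \call F(\tilde{\scr A}) \to \call F(\scr A)$ is the covering of Lemma \ref{lemma:covering1}; the result is a morphism in $\call F(\scr A)$ from a face to a chamber, hence an object of $\zeta$. On a morphism $[F_1,C_1] \leq [F_2,C_2]$, Corollary \ref{cor:rel} encodes the relation as the conjunction of $F_2 \leq F_1$ together with $\tilde{\pi_{F_1}}(C_1) = \tilde{\pi_{F_1}}(C_2)$; setting $n := q(F_2 \leq F_1)$ I would then map this relation to the triple $(n, \phi[F_1,C_1], \phi[F_2,C_2])$. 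Projecting the chamber equality through $p$ via diagram \eqref{diagrammacellulare} delivers condition \eqref{eq:condition} in $\zeta$, so the triple is a genuine morphism, and compatibility with composition is a formal match with rule (iii) of Definition \ref{df:salvetticategory}. Since $\Lambda$-invariance of $\phi$ is built in, $\phi$ descends to the desired $\bar{\phi}$.

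To see that $\bar{\phi}$ is an isomorphism, bijectivity on objects is immediate from Lemma \ref{lemma:covering1} once one notes that $q$ both sends chambers to chambers and has only chambers in the fibers over chambers. Bijectivity on morphisms I would verify by the unique-lift property of the covering $q$: given $(n,m_1,m_2)$ in $\zeta$ and a chosen lift $[F_1,C_1]$ of $m_1$, there is a unique lift $F_2 \leq F_1$ of $n$, and then a unique chamber $C_2 \geq F_2$ such that $[F_1,C_1] \leq [F_2,C_2]$ in $\mbox{Sal}(\tilde{\scr A})$; altering the initial lift $[F_1,C_1]$ translates the constructed pair by $\Lambda$, so the morphism in the quotient is well-defined independently of the choice.

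The main obstacle, as I see it, is the last existence-uniqueness step, which pivots on the compatibility of two a priori different ``projections after restriction at a face''. The toric projection $\pi_{F_1}$ appearing in \eqref{eq:condition} takes values in $\call F(\scr A_{q(F_1)})$, while the affine Salvetti poset is defined in terms of $\tilde{\pi}_{F_1}$ on $\call D((\tilde{\scr A})_{F_1})$, and the natural cellular lift $\tilde{\pi_{F_1}}$ of $\pi_{F_1}$ lands in the \emph{larger} cell complex $\call D(\tilde{\scr A_{q(F_1)}})$ (cf.\ Figure \ref{fig:restrictions}). The bridge is the commutative square \eqref{diagrammacellulare} together with the Lemma preceding Corollary \ref{cor:rel}, which ensures that $\tilde{\pi}_{F_1}$ and $\tilde{\pi_{F_1}}$ coincide on chamber pairs sharing a common face above $F_1$: once this reconciliation is in place, uniqueness of the lift follows from the free action of $\Lambda$ on fibers of $p$, and the argument closes.
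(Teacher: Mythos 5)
Your proposal is correct and essentially mirrors the paper's proof: the same functor $[F,C]\mapsto q(F\leq C)$ on objects, the same reliance on Corollary \ref{cor:rel} to identify relations, and the same reconciliation via diagram \eqref{diagrammacellulare} and the lemma preceding Corollary \ref{cor:rel} of the three projections $\pi_{F}$, $\tilde{\pi}_{F}$, $\tilde{\pi_{F}}$, which is the genuinely technical point. The only cosmetic difference is that the paper packages the conclusion by observing that $\Pi$ is a Galois covering of acyclic categories with automorphism group $\Lambda$ (and invokes the general fact that the base of such a covering is the quotient), whereas you check bijectivity of the induced $\bar{\phi}$ on objects and morphisms by hand using unique lifting through $q$ and freeness of the $\Lambda$-action; both are the same argument at slightly different levels of abstraction.
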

\begin{proof}
  We first need to construct a projection, i.e., a functor 
  $\Pi:\mbox{Sal}\,(\tilde{\scr A}) \to \zeta$. 
  Recall that the objects of $\mbox{Sal}\,(\tilde{\scr A})$ are
  of the form $[F, C]$ with $F,C\in \call F (\tilde{\scr A})$, $F \leq
  C$, and $C$ a chamber of $\tilde{\scr A}$. 
Also, from the proof of Lemma \ref{lemma:covering1} we recall the
projection $q:\call F(\tilde{\scr A })\to \call F(\scr A)$.
It is now possible to define $\Pi$ on the objects as follows:
  \begin{displaymath}
    \Pi([F, C]) = q(F \leq C):q(F) \to q(C).
  \end{displaymath}
  
 According to Corollary \ref{cor:rel}, relations in $\call F(\tilde{\scr A})$ are of the form
  $[F_1, C_1] \leq [F_2, C_2]$ where $F_2 \leq F_1$ and $
   \tilde{ \pi_{F_1}}(C_1) =\tilde{\pi_{F_1}}(C_2).$

  On the other hand, morphisms in $\zeta(\scr A)$ are given by triples
  $(n, m_1, m_2)$ where $m_1:F_1 \to C_2$, $m_2:F_2 \to C_2$ are objects of $\zeta$,
  $n: F_2 \to F_1$ is a morphism in $\call F(\scr A)$ and the following
  condition holds:
  \begin{displaymath}
    \pi_{F_1}(m_1) = \pi_{F_1}(m_2).
  \end{displaymath}

  Therefore, in order to able to map a relation
  $[F_1, C_1] \leq [F_2, C_2]$ to the morphism
  $(q(F_2 \leq F_1), \Pi([F_1, C_1]), \Pi([F_2, C_2]))$ and for this
  map to be surjective, we need to verify the following condition:
  \begin{displaymath}
    \tilde{\pi_{F_1}}(C_1) = \tilde{\pi_{F_1}}(C_2) \iff
    \pi_{q(F_1)}(\Pi([F_1, C_1])) = \pi_{q(F_1)}(\Pi([F_2, C_2])).
  \end{displaymath}
We go back to the diagram (\ref{diagrammacellulare}), and write the
corresponding commutative diagram of face categories:
  \begin{displaymath}
    \xymatrix{
      \call F(\tilde{\scr A}) \ar[r]^{\tilde{\pi_{F_1}}} \ar[d]_q &
      \call F(\tilde{\scr A_{F_1}}) \ar[d]^q\\
      \call F(\scr A) \ar[r]_{\pi_{q(F_1)}} & F(\scr A_{q(F_1)})  }
  \end{displaymath}
  Now $\tilde{\pi_{F_1}}$ is a map of posets and since $\tilde{\pi_{F_1}}(F_1) = 
  \tilde{\pi_{F_1}}(F_2)$ we have
  \begin{displaymath}
    \tilde{\pi_{F_1}}(C_1) = \tilde{\pi_{F_1}}(C_2) \iff
    \tilde{\pi_{F_1}}(F_1 \leq C_1) = \tilde{\pi_{F_1}}(F_2 \leq C_2).
  \end{displaymath}
  Furthermore $q$ is a covering of categories, in particular
  is injective on the morphisms incident on $\tilde{\pi_{F_1}}(F_1)$.
  It then follows that
  \begin{align*}
    \tilde{\pi_{F_1}}(F_1 \leq C_1) = \tilde{\pi_{F_1}}(F_2 \leq C_2) \Leftrightarrow
    q\circ\tilde{\pi_{F_1}}(F_1 \leq C_1) = q\circ\tilde{\pi_{F_1}}(F_2 \leq C_2)\\
    \Leftrightarrow \pi_{q(F_1)}(q(F_1 \leq C_1)) = \pi_{q(F_1)}(q(F_2 \leq C_2)).
  \end{align*}

  Concluding: the functor $\Pi$ is well defined and 
  it now follows easily from Lemma \ref{lemma:covering1} that it is
  a Galois covering of acyclic categories with $\Lambda$ as automorphism
  group.
\end{proof}

We want to show that, in our particular case, the nerve
construction commutes with the quotient. Babson and Kozlov in \cite{Babson2005439}
give a necessary and sufficient condition for this:
\begin{prop}[{\cite[Theorem 3.4]{Babson2005439}}]
  \label{prop:commutaquoziente}
  Let $\call C$ be an acyclic category equipped with a group
  action $G \agisce \call C$. A canonical isomorphism
  $\Delta(\call C)/G \cong \Delta(\call C/G)$ exists if and only if the 
  following condition is satisfied:
  \begin{itemize}
  \item[] Let $t \geq 2$ and let $(m_1, \dots, m_{t-1}, m_a)$,
    $(m_1, \dots, m_{t-1}, m_b)$ composable morphism chains.
    Let $G m_a = G m_b$, then ther exists some $g \in G$, such that
    $g(m_a) = m_b$ and $g(m_i) = m_i$, $\forall i \in \{1, \dots, t-1\}$.
  \end{itemize}
\end{prop}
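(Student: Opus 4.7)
The approach is to analyze the canonical trisp morphism $\bar\Phi:\Delta(\call C)/G\to \Delta(\call C/G)$ induced by the quotient functor $q:\call C\to \call C/G$, sending a composable chain of morphisms to its image chain. Since $q$ is $G$-equivariant with trivial action on the target, the induced simplicial map $\Phi:\Delta(\call C)\to\Delta(\call C/G)$ factors through the orbit space to give $\bar\Phi$. The plan is to show that the Babson--Kozlov condition is equivalent to $\bar\Phi$ being a bijection on simplices, which for trisps is the same as being an isomorphism.

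Surjectivity of $\bar\Phi$ on simplices holds unconditionally: any composable chain $\bar m_1,\dots,\bar m_k$ in $\call C/G$ can be lifted one morphism at a time, using transitivity of $G$ on the fibers of $q$ to adjust each successive representative so that its source matches the target of the previously chosen lift. So the content of the proposition is injectivity.

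For the \emph{if} direction, assuming the condition, I would prove injectivity of $\bar\Phi$ by induction on the chain length $k$: given two composable chains $(m_1,\dots,m_k)$ and $(m_1',\dots,m_k')$ with $Gm_i=Gm_i'$ for every $i$, the base case $k=1$ is tautological, and in the induction step the hypothesis at length $k-1$ produces $g\in G$ with $gm_i=m_i'$ for $i<k$. Then $(m_1',\dots,m_{k-1}',gm_k)$ and $(m_1',\dots,m_{k-1}',m_k')$ are two composable chains agreeing on the first $k-1$ morphisms, with last entries in the same $G$-orbit; applying the condition at $t=k$ yields $h\in G$ fixing $m_1',\dots,m_{k-1}'$ and sending $gm_k$ to $m_k'$, so $hg$ identifies the two original chains. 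Conversely, for the \emph{only if} direction, the hypotheses of the condition provide two composable chains mapping to the same simplex of $\Delta(\call C/G)$, and injectivity of $\bar\Phi$ directly furnishes the required $g\in G$.

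The main obstacle is not any single step in the above induction but the preliminary bookkeeping: one must verify that $\call C/G$ is a well-defined acyclic category (acyclicity is the subtle part, as it can fail under bad actions), that simplices of the quotient trisp $\Delta(\call C)/G$ correspond bijectively to $G$-orbits of composable chains in $\call C$, and that a bijective trisp morphism inducing the identity on this simplex data is automatically an isomorphism of trisps. Once these foundations are set, both implications reduce to the formal induction sketched above.
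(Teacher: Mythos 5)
The paper does not prove this proposition; it is quoted verbatim as Theorem~3.4 of Babson and Kozlov \cite{Babson2005439} and used as a black box, so there is no in-paper argument to compare against. Evaluated on its own terms, your sketch is essentially the standard argument: reducing the problem to bijectivity on simplices of the canonical map $\bar\Phi:\Delta(\call C)/G\to\Delta(\call C/G)$, observing that surjectivity comes for free by lifting one morphism at a time using transitivity of $G$ on fibres of $q$, and then matching injectivity with the Babson--Kozlov condition via an induction on chain length (match the length-$(k-1)$ prefix by induction, then invoke the condition at $t=k$ to correct the tail while fixing the prefix). The ``only if'' step is indeed an immediate specialization of injectivity. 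All of this is correct.

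The items you dismiss as ``preliminary bookkeeping'' are, however, exactly where the mathematical weight sits, and I would push back on waving them off. First, $\Delta(\call C)/G$ is a priori only a topological quotient; before one can speak of $\bar\Phi$ as a map of trisps, one must argue that the orbit space carries an induced trisp structure whose cells are $G$-orbits of composable chains. The relevant observation is that $G$ acts by automorphisms of an acyclic category, so any $g$ stabilizing a composable chain must fix every morphism in it (functoriality prevents permuting the vertices of a simplex in an acyclic nerve); hence simplex stabilizers act trivially. Second, passing from ``bijective on cells'' to ``isomorphism of trisps'' requires checking that attaching data and the quotient topology agree with those of $\Delta(\call C/G)$ -- this is true but not automatic, and in Babson--Kozlov it is precisely this regularity analysis, not the induction you carry out, that makes the theorem non-trivial. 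If you mean to upgrade your sketch to a proof, those two points deserve the same care you gave the chain induction.
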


The next lemma ensures that we can apply the previous proposition to
our case.

\begin{lemma}\label{lemma:galoiscondition}
  Let $\call C$ be an acylic category and $G \agisce \call C$ act
  as the Galois group of a covering map. 
  Then the condition of proposition \ref{prop:commutaquoziente} is satisfied.
\end{lemma}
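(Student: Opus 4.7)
The plan is to reduce the proposition's condition to the freeness of the action of the deck group of a Galois covering of acyclic categories on the set of objects. In the terminology of Bridson--Haefliger (for scwols), a covering functor is a local bijection on stars, and its Galois/deck group necessarily acts freely on objects; otherwise the quotient would fail to be acyclic, or the local star bijections would collapse.

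So suppose we are given composable chains $(m_1, \ldots, m_{t-1}, m_a)$ and $(m_1, \ldots, m_{t-1}, m_b)$ in $\call C$ with $G m_a = G m_b$. First I would observe that the common prefix forces $m_a$ and $m_b$ to share a source: namely, the target of $m_{t-1}$, which I call $x$. Then, picking any $g \in G$ with $g(m_a) = m_b$, the fact that $G$ acts by functors means sources are mapped to sources, so $g(x) = x$.

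At this point freeness of the $G$-action on objects forces $g$ to be the identity of $G$, and hence $m_a = g(m_a) = m_b$. We may therefore take $g = e$ as the element required by the statement: it trivially satisfies $g(m_a) = m_b$ and $g(m_i) = m_i$ for every $i = 1, \ldots, t-1$, which verifies the Babson--Kozlov condition.

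The only real obstacle is identifying and justifying the freeness property for Galois coverings of acyclic categories in the appropriate generality. This is standard in the literature cited; and for the concrete situation in which the lemma is ultimately applied (the $\Lambda$-action on $\mbox{Sal}(\tilde{\scr A})$), freeness is immediate from the fact that $\Lambda$ acts on $\mat R^n$ by translations, and hence has trivial stabilizers on every cell.
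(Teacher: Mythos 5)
Your proof is correct and follows essentially the same strategy as the paper: observe that the common prefix forces $m_a$ and $m_b$ to share a domain, conclude that $m_a = m_b$, and note that $g = e$ then trivially verifies the Babson--Kozlov condition. The only difference is in the covering-theoretic fact invoked at the key step: the paper uses directly that the covering functor $\varphi:\call C\to\call D$ is injective on morphisms emanating from a fixed object (a local bijection on stars), so $\varphi(m_a)=\varphi(m_b)$ with common domain gives $m_a=m_b$; you instead pick $g\in G$ with $g(m_a)=m_b$, note $g$ fixes the shared source, and invoke freeness of the deck group action on objects to get $g=e$. These two facts are equivalent for Galois coverings, so your route is a minor variant rather than a different argument; both are fully standard, and your closing remark that freeness is immediate for the $\Lambda$-action by translations on $\mathbb{R}^n$ correctly grounds the abstract claim in the concrete case where the lemma is applied.
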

\begin{proof}
  Consider two composable morphism chains as in the condition of proposition
  \ref{prop:commutaquoziente}. Since $t \geq 2$ and the chains are composable,
  $m_a$ and $m_b$ must have the same domain, $m_a: p \to q$, $m_b:p \to r$.
  Furthermore there is a $g \in G$, such that $m_b = gm_a$.
  
  Let $\varphi: \call C \to \call D$ be a covering map with Galois group $G$.
  Then $\varphi(m_a) = \varphi(m_b) \Rightarrow m_a = m_b$ and the condition
  is trivially satisfied.
\end{proof}

We finally get to the proof of Theorem \ref{teo:teoremone}, which now
follows as an application of the previous considerations.

\begin{proof}[Proof of Theorem \ref{teo:teoremone}]
  According to proposition \ref{prop:embeddingquoziente} the statement holds
  for the complex $\tilde S/\Lambda = \Delta(\mbox{Sal}\,\tilde{\scr A})/\Lambda$.
  The lattice $\Lambda$ acts on $\tilde S$ as the automorphism group 
  of a covering map, in particular lemma \ref{lemma:galoiscondition} 
  holds and we have:
  \begin{displaymath}
    \tilde S/\Lambda = \Delta(\mbox{Sal}\,\tilde{\scr A})/\Lambda \cong
    \Delta(\mbox{Sal}\,\tilde{\scr A}/\Lambda) \cong \Delta(\zeta).
  \end{displaymath}
\end{proof}


\section{The fundamental group}\label{sec:fg}

As an application of the results of the previous sections, and in a
structural tribute to the seminal paper of Salvetti
\cite{salvetti1987tcr}, we would like to give a presentation for the
fundamental group of a complexified toric arrangement.\\

\def\toro{T_\Lambda}
\def\fg{\pi_1}
\def\compl{M(\scr A)}

\subsection{Product structure}

First, note that the inclusion $M(\scr A) \to \toro$ induces an epimorphism of groups 
$$\epsilon: \fg(M(\scr A)) \to \fg(\toro) \simeq \mathbb Z^n. $$

\def\conv{\textup{conv}}
\begin{lemma}\label{lm:section} The map $\epsilon$ has a section $\xi$.
\end{lemma}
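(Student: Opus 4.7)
The plan is to construct $\xi$ geometrically, by exhibiting a compact torus embedded in $\compl$ which realizes, via the inclusion into $\toro$, the standard homotopy equivalence between the compact and the complex torus.

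I would work in the universal cover $\Hom_{\mat Z}(\Lambda, \mat C) \cong \mat C^n$ of $\toro$ recalled in Section \ref{covering_spaces}, where the lifted arrangement $\tilde{\scr A}$ consists of the hyperplanes $\{z \in \mat C^n : \gen{\alpha, z} = a'\}$ with $\alpha \in \mat Z^n$ and $a' \in \mat R$. The key observation is that for $z = x + iy$ with $x,y \in \mat R^n$, one has $\gen{\alpha, z} = a'$ if and only if both $\gen{\alpha, x} = a'$ and $\gen{\alpha, y} = 0$. Hence, if I pick $y_0 \in \mat R^n$ satisfying $\gen{\alpha, y_0} \neq 0$ for every character $\alpha$ appearing in $\scr A$ -- possible since these are finitely many and each excludes only a proper linear hyperplane of $\mat R^n$ -- then the entire real affine subspace $\mat R^n + iy_0$ lies in $M(\tilde{\scr A})$.

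Next, since $\mat R^n + iy_0$ is stable under the action of $\Lambda$ on $\mat C^n$ by real translations, it descends under $p$ to an embedded compact torus $K := (\mat R^n + iy_0)/\Lambda \cong (S^1)^n$ inside $\compl$; call this inclusion $\iota\colon K \hookrightarrow \compl$. Composing further with the inclusion $\compl \hookrightarrow \toro$ realizes $K$ as the compact torus of multiradii $\bigl(e^{-2\pi (y_0)_1},\dots, e^{-2\pi (y_0)_n}\bigr)$ inside $\toro \cong (\mat C^*)^n$, which is a deformation retract of $\toro$. Therefore $\epsilon \circ \iota_* \colon \fg(K) \to \fg(\toro)$ is an isomorphism, and setting
\[
\xi := \iota_* \circ (\epsilon \circ \iota_*)^{-1} \colon \fg(\toro) \to \fg(\compl)
\]
yields the desired section of $\epsilon$.

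The whole argument boils down to a simple geometric remark: the real-defined structure of $\tilde{\scr A}$ leaves enough room to slide out into an imaginary direction and thereby avoid all hyperplanes simultaneously. No step looks like a serious obstacle; the only point requiring any attention is the existence of the generic $y_0$, which is a finite avoidance statement in $\mat R^n$.
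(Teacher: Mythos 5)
Your argument is essentially the paper's: both lift to the universal cover, choose a generic imaginary direction $y_0$ so that the real slice $\mathbb{R}^n + iy_0$ avoids every hyperplane of $\tilde{\scr A}$, and descend to an embedded compact subtorus of $M(\scr A)$ that is a deformation retract of $T_\Lambda$, from which a section of $\epsilon$ follows. Your genericity condition $\langle\alpha, y_0\rangle\neq 0$ for every character $\alpha$ occurring in $\scr A$ is exactly what is needed and is in fact slightly sharper than the paper's stated requirement that $y$ lie in a chamber of $\tilde{\scr A}$, which alone does not guarantee that $\mathbb{R}^n+iy$ misses the lifted hyperplanes once some $a\neq 1$.
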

\begin{proof}

Choose a point $y\in \mathbb R^n$ in a chamber of $\tilde{\scr
  A}$. Then for all choices of $x\in \mathbb R^n$ we have
$$
x+iy\in M(\tilde{\scr A}).
$$ 
Accordingly,  for every choice of
arguments $\theta_1,\ldots ,\theta_n\in \mathbb R$, 
$$(\lambda_1e^{2\pi i \theta_1},\ldots , \lambda_n e^{2\pi i
  \theta_n})\in M(\scr A)$$

where, for all $j=1,\ldots ,n$, $\lambda_j:= e^{-2\pi y_j}$
This defines a map
$$f: T_\Lambda \to M(\scr A),\quad z\mapsto (\lambda_1e^{2\pi i
  \arg z_1},\ldots , \lambda_n e^{2\pi i \arg z_n}) $$
that induces a homomorphism $$\xi: \fg(T_\Lambda) \to \fg( M(\scr
A)).$$

Since $f$ is a homotopy (right-) inverse to the inclusion $M(\scr A) \to
T_\Lambda$, 
$\epsilon\xi = id$ and $\xi$ is
the required section.
\end{proof}

\begin{lemma}\label{lm:sequence}
The sequence
$$0\to p_*(\fg(\tilde{\call S})) \stackrel{\iota}{\to} \fg(\compl)
\stackrel{\epsilon}{\to} \fg(\toro)\to 0$$
is split exact. Therefore
$$
\fg(M(\scr A)) \simeq \fg(\tilde{\call S}) \rtimes \fg(\toro).
$$
\end{lemma}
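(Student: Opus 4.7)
The plan is to identify the displayed sequence with the short exact sequence of fundamental groups associated to the regular covering $p:M(\tilde{\scr A})\to M(\scr A)$ described in Section \ref{covering_spaces}, and then split it using Lemma \ref{lm:section}.

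The first step would be to invoke standard covering space theory. By construction, $p$ is the restriction to $M(\tilde{\scr A})$ of the universal cover $\Hom_{\mat Z}(\Lambda,\mat C)\to \toro$. In particular $\Lambda$ acts freely and properly discontinuously on $M(\tilde{\scr A})$ with quotient $M(\scr A)$, so $p$ is a regular covering with deck group $\Lambda$. This yields a short exact sequence
$$
1\to p_*\fg\!\left(M(\tilde{\scr A})\right)\to \fg(M(\scr A))\to \Lambda\to 1,
$$
and moreover $p_*$ is injective (as is always the case for a covering map of path-connected spaces). Since by Salvetti's theorem $\tilde{\call S}\hookrightarrow M(\tilde{\scr A})$ is a deformation retract, we may replace $\fg(M(\tilde{\scr A}))$ by $\fg(\tilde{\call S})$ throughout, recovering the left-hand term of the sequence in the statement.

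The second step is to identify $\Lambda$ with $\fg(\toro)$ so that the right-hand map above coincides with $\epsilon$. The canonical identification $\Lambda\cong\fg(\toro)\cong\mat Z^n$ is induced by the universal covering $\Hom_{\mat Z}(\Lambda,\mat C)\to\toro$: the deck group acts by translation by lattice vectors. Given a loop $\gamma$ in $M(\scr A)$, the quotient-sequence sends $[\gamma]$ to the translation vector $\tilde\gamma(1)-\tilde\gamma(0)\in\Lambda$ of a lift to $M(\tilde{\scr A})\subseteq\Hom_{\mat Z}(\Lambda,\mat C)\cong\mat C^n$; but this vector is exactly the tuple of winding numbers of $\gamma$ around the coordinate directions in $\toro=(\mat C^*)^n$, which is $\epsilon[\gamma]$ under the same identification. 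Hence the sequence from step one is the sequence in the statement, establishing its exactness.

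Finally, Lemma \ref{lm:section} provides a section $\xi$ of $\epsilon$, which splits the sequence. The semidirect product decomposition
$$
\fg(M(\scr A))\cong\fg(\tilde{\call S})\rtimes\fg(\toro)
$$
follows at once (identifying $\fg(\tilde{\call S})$ with its injective image under $p_*$). The only delicate point in this plan is the compatibility check of step two: one must verify that the deck-group-valued quotient map produced by covering theory really is $\epsilon$ under the natural isomorphism $\Lambda\cong\fg(\toro)$. Once this identification is made explicit, the remainder is formal.
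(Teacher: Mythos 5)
Your proposal is correct and follows essentially the same route as the paper: both rest on covering-space theory for the regular cover $p:M(\tilde{\scr A})\to M(\scr A)$ with deck group $\Lambda$, identify the deck-group quotient with $\epsilon$ via $\Lambda\cong\fg(\toro)$, and split by the section $\xi$ from Lemma \ref{lm:section}. The paper simply carries out the identification more directly (lifting a loop $\gamma\in\Ker\epsilon$ to the universal cover of $T_\Lambda$ and observing the lift closes up), whereas you package the same content as the standard short exact sequence of a regular covering plus a winding-number identification, so the two arguments differ only in presentation.
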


\begin{proof}
We already showed that the map $\epsilon$ has a section, we then need
only to prove $\iota(p_*(\fg(\tilde{\call S}))) = \Ker\,\epsilon$.
It is clear that $\iota(p_*(\fg(\tilde{\call S}))) \subseteq
\Ker\,\epsilon$.
For the opposite inclusion we consider the sequence
$$0\to p_*(\fg(M(\tilde{\scr A})))\to \fg(\compl)\to \fg
(\toro)\to 0$$
Let $[\gamma] \in \fg(M(\scr A))$ be an element of $\Ker\,\epsilon$.
Let $j$ be the inclusion of $M(\scr A)$ in the ambient torus $\toro$. Then $j\circ\gamma$ is a null homotopic loop in $T_\Lambda$ and
lifts therefore to a closed path $\gamma'$ in the universal cover $\mat C^n$.
Let $\tilde\gamma$ be the lift of $\gamma$ to $M(\tilde{\scr A})$
with base point $x$, then $\gamma' = \tilde{j}\circ\tilde\gamma$ and 
$\tilde\gamma$ is also a closed path. 
That is, $[\gamma] = p_*[\tilde\gamma] \in p_*(\fg(M(\tilde{\scr A}))) \cong
p_*(\fg(\tilde{\call S}))$.
\end{proof}

\subsection{Presentation of $\fg(M(\tilde{\scr A}))$}\label{subsec:gen}

As a stepping stone towards the computation of a presentation for the fundamental group of
$M(\scr A)$, we establish some notation and recall the
presentation of $\fg(\tilde{\call S})$ given by Salvetti in
\cite{salvetti1987tcr}.\\

Choose - and from now fix - a chamber $C_0$ of $\tilde{\scr A}$, and
let $x_0$ be a generic point in $C_0$ - i.e. such that for all
$i=1,\ldots , d$ the
straight line segment $s_i$
from $x_0$ to $u_ix_0$ meets only faces of codimension at most
$1$.

\begin{oss}\label{oss:codimensione}
In general, given a set $\call K$ of cells of a complex, $\call
K_i$ will denote the subset of cells of codimension $i$. 

Also, to streamline notation we will from now write $\call F$,
respectively $\tilde{\call F}$ for $\call F(\scr A)$, $\call
F(\tilde{\scr A})$.
\end{oss}

\newcounter{conto}
\addtocounter{conto}{1}

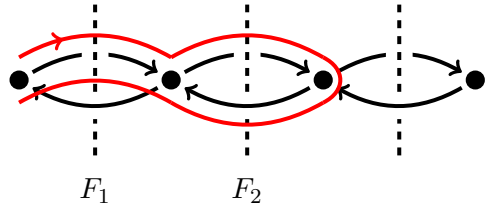
\begin{figure}[thp]
  \centering
    \begin{tikzpicture}[scale = 1]
    \tikzstyle{every path} =
    [black, line width = 1.5pt];
    \tikzstyle{every node} = 
    [circle, fill, inner sep=2pt, outer sep=3pt, draw];
    
    \path [draw] node (A) at (0,0) {}
    node (B) at (2,0) {}    node (C) at (4,0) {}
    node (D) at (6,0){};

    \path[->] (A) edge [bend left = 30] (B)
    (B) edge [bend left = 30] (C)
    (C) edge [bend left = 30] (D);
 
    \draw[white, line width = 10pt] (1,-1) -- (1,1) (3,-1) -- (3,1) (5,-1) -- (5,1);    
    \draw[dashed] (1,-1) -- (1,1) (3,-1) -- (3,1) (5,-1) -- (5,1);

    \draw (D) edge [white, line width = 10pt, bend left = 30] (C)
    (C) edge [white, line width = 10pt, bend left = 30] (B)
    (B) edge [white, line width = 10pt, bend left = 30] (A);
    
    \path[->] (D) edge [bend left = 30] (C)
    (C) edge [bend left = 30] (B)
    (B) edge [bend left = 30] (A);
    
    \path node [draw = none, fill = none, below] (F1) at (1,-1) {$F_1$}
    node [draw = none, fill = none, below] (F1) at (3,-1) {$F_2$};
    
    \draw[decoration={markings,mark=at position 0.3 with {\arrow{>}}}]
    (0,.3) edge [red, bend left = 30, postaction={decorate}] (2,.3)
    (2,.3) edge [red, bend left = 30] (4,.3) 
    (4,-.3) edge [red, bend left = 30] (2,-.3)
    (2,-.3) edge [red, bend right = 30] (0,-.3);
    \draw[red] (4,.3) .. controls (4.3,.15) and (4.3,-.15).. (4,-.3);
    \end{tikzpicture}
    \caption{Generators, an example: $\beta_{F_2} = l_{F_1}l_{F_2}^2l_{F_1}^{-1}$}
\end{figure}

\paragraph{\thesubsection.\theconto. Generators.}
Recall the graph $\tilde{\call G} := \call G(\tilde{\scr A})$ 
of Definition \ref{def:unsubdivided}. Here we will adopt a useful
notational convention inspired by \cite{salvetti1987tcr}: we will
write edges of $\tilde{\call G}$ as indexed by the  face of
codimension 1 they cross, and in writing a path we will write $l_F$
for a crossing of $F$ `along the direction of the edge', $l_F^{-1}$
for a crossing `against the direction' of the edge. By specifying the
first vertex of the path then there is no confusion about which edge is
used, and in which direction.

A {\em positive} path then is a path of the form 
$$l_{F_1}l_{F_2}\ldots l_{F_k}$$
for $F_1,\ldots F_k\in \tilde{\call F}_1$. It is also {\em minimal} if
the hyperplane supporting $F_i$ is different from the hyperplane
supporting $F_j$ for all $i\neq j$.

Since any two positive minimal paths with same origin and same end are
homotopic, given $C,C'\in\tilde{\call F}_0$ we will sometimes write
$(C\to C')$ for the (class of) positive minimal paths starting at $C$
and ending at $C'$.

For every $F\in\tilde{\call F}_1$ we define a path as follows:

\begin{equation}\label{eq:beta}
\beta_F:= (C_0\to (C_0)_F)l_F^2 (C_0\to (C_0)_F)^{-1},
\end{equation}

where, here and in the following, for a chamber $C$ and a face $F$ the
expression $C_F$ will denote the unique chamber in $\pi_F^{-1}(\pi_F(C))$ that
contains $F$ in its boundary.

\begin{lemma}[p. 616 of \cite{salvetti1987tcr}]
The group $\fg(\tilde{\call S})$ is
generated by the set $\{\beta_F\vert F\in \tilde{\scr F}_1 \}$.
\end{lemma}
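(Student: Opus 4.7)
The plan is to work with the CW structure on $\tilde{\call S}$ inherited from the unsubdivided Salvetti complex of Definition \ref{def:unsubdivided}, whose $1$-skeleton is $\tilde{\call G}$ and whose $2$-cells are attached along the minimal positive paths associated with codimension-two faces. Since $\pi_1$ depends only on the $2$-skeleton, by cellular approximation every class in $\pi_1(\tilde{\call S},x_0)$ is represented by an edge-loop $\gamma=l_{F_1}^{\epsilon_1}\cdots l_{F_k}^{\epsilon_k}$ in $\tilde{\call G}$, visiting chambers $C_0=D_0,D_1,\ldots,D_k=C_0$.

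I will proceed by induction on $k$. For the base case $k=0$ the loop is constant. For the inductive step, observe that since $\gamma$ returns to $C_0$ the signed number of crossings of every hyperplane of $\tilde{\scr A}$ is zero; hence if $k>0$ there exist indices $i<j$ for which $F_i$ and $F_j$ lie on the same hyperplane $H$ and $\epsilon_i=-\epsilon_j$. Using the 2-cell relations — which assert that any two minimal positive paths joining opposite chambers across a codimension-two face are homotopic — one can successively commute the edge $l_{F_j}^{\epsilon_j}$ past the intervening ones until it lands next to $l_{F_i}^{\epsilon_i}$, without altering the class of $\gamma$ in $\pi_1(\tilde{\call S})$. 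After this rearrangement $\gamma$ factors (up to 2-cell homotopy and a choice of codimension-one face $F\subset H$ identified via those homotopies) as $\alpha\cdot l_F^{\,2}\cdot\alpha^{-1}\cdot\tilde\gamma$, where $\alpha$ is a sub-path from $C_0$ to a chamber adjacent to $F$ and $\tilde\gamma$ is a strictly shorter edge-loop at $C_0$.

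To conclude I would deform the conjugating path $\alpha$ into the canonical positive minimal path $(C_0\to(C_0)_F)$ appearing in the definition \eqref{eq:beta} of $\beta_F$. The discrepancy between $\alpha$ and this canonical path, when closed up by $(C_0\to(C_0)_F)^{-1}$, is itself a loop at $C_0$ whose edge-length is strictly less than $k$, hence by inductive hypothesis a product of $\beta_F$'s. Combined with the same hypothesis applied to $\tilde\gamma$, this exhibits $[\gamma]$ as a product of $\beta_F^{\pm1}$'s, proving the lemma.

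The hard part will be the rearrangement step of the second paragraph: it is not immediate that two crossings of the same hyperplane $H$ can always be brought into adjacency using only codimension-two relations. One needs a connectivity statement on the partial order of codimension-two faces intervening between the two crossings, together with the observation that in any rank-two section the local model reduces to the standard two-line arrangement and therefore supplies the required braid-type moves. Once this combinatorial fact is established, the verification that the adjacent double-crossing is a conjugate of $\beta_F$ modulo shorter $\beta$-loops is a straightforward application of the 2-cell homotopies.
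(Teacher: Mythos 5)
The paper does not prove this lemma; it cites it directly from Salvetti \cite{salvetti1987tcr} (p.~616), and the intended mechanism is visible from Lemma~\ref{lm:salv12}, which the paper states immediately afterwards. Salvetti's route is: given an edge-loop, first convert it to a \emph{positive} path at the cost of conjugates of $\beta_F^{\pm1}$, using the elementary homotopy $l_F^{-1}\simeq l_F\cdot(l_F^2)^{-1}$ (valid because, in the unsubdivided Salvetti complex, the two directed edges between a pair of adjacent chambers $C$ and $\mbox{op}(C,F)$ bound a bigon whose boundary is $l_F^2$); then apply Lemma~\ref{lm:salv12} to reduce the resulting positive loop to a product of $\beta_G^\nu$'s, and finally push the $\nu$-conjugators back to $C_0$ to rewrite everything in terms of $\beta_F$'s. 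Your argument does not follow this route and has two genuine problems.

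First, the premise of your inductive step is incorrect: it is not true that because $\gamma$ is a loop there must exist crossings $i<j$ of the same hyperplane with $\epsilon_i=-\epsilon_j$. The exponents $\epsilon_i$ in $l_{F_i}^{\epsilon_i}$ do not record the geometric direction of crossing. Under the convention of Section~\ref{sec:hyperplanes}, both $l_F$ and $l_F^{-1}$ issued from a chamber $C$ move to $\mbox{op}(C,F)$ — they differ only in which of the two directed edges $e_{[F,C]}$, $e_{[F,\mbox{op}(C,F)]}$ is traversed. In particular $\beta_F$ itself contains $l_F^2$, a loop whose ``signed'' crossing count in your sense is $+2$, not $0$. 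What is true is that each hyperplane is crossed an even number of times, but this gives no constraint on the $\epsilon_i$. Moreover, to manufacture the factor $l_F^2$ you aim for, you need two \emph{consecutive} crossings of the \emph{same} codimension-one face with the \emph{same} exponent $+1$, not opposite exponents: $l_Fl_F^{-1}$ based at a chamber is a backtracking and is null, and two consecutive crossings of a single hyperplane necessarily occur at the same face (a chamber's closure meets a given hyperplane in at most one face).

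Second, the rearrangement step you flag as ``the hard part'' is precisely the combinatorial heart of the matter and cannot be waved through: it is essentially what Lemma~\ref{lm:salv12} accomplishes, by a careful induction along a positive path using the codimension-two relations and the positivity of the path. Your sketch proposes to establish a connectivity statement on codimension-two faces to justify the commutation moves, but without positivity the intermediate paths produced by braid moves can lengthen and backtrack, so the induction on word length does not close. The clean way to make the induction go through is exactly to first trade negative letters for positive ones plus $\beta$-conjugates — i.e.\ to reduce to Lemma~\ref{lm:salv12} — rather than to pair off opposite-sign crossings.
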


Given a positive path $\nu=l_{F_1},\ldots,l_{F_k}$ define loops
\begin{equation}\label{eq:nu}
\beta^\nu_{F_i}:=l_{F_1}\cdots  l_{F_{i-1}} l_{F_i}^2 l_{F_{i-1}}^{-1}\cdots l_{F_1}^{-1}.
\end{equation}
Moreover, let $F_{j_1},\ldots , F_{j_l}$ be the sequence
obtained from $F_1,\ldots , F_k$ by recursively deleting faces $F_j$
that are supported on a hyperplane which supports an odd number of
elements of $F_{j+1},\ldots , F_k$ (compare
\cite[p. 614]{salvetti1987tcr}) and define
\begin{equation}\label{eq:sigma}
\Sigma(\nu):=(F_{i_l}, \ldots , F_{i_l}).
\end{equation}

\begin{lemma}[Lemma 12 in \cite{salvetti1987tcr}]\label{lm:salv12}
Given a positive path $\nu=l_{F_1},\cdots , l_{F_k}$ starting in the chamber
$C$ and ending in $C'$.
Then there is a homotopy
$$
\nu \simeq \big(\prod_{G\in\Sigma(\nu)}\beta_G^\nu \big) (C\to C').
$$ 
\end{lemma}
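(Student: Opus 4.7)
The plan is to prove the formula by induction on the length $k$ of the positive path $\nu$. The base case $k=0$ is trivial: both sides are constant paths, and $\Sigma(\nu)$ is empty.

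For the inductive step, write $\nu = \nu' \cdot l_{F_k}$, where $\nu' = l_{F_1}\cdots l_{F_{k-1}}$ runs from $C$ to some chamber $C''$. I would split into two cases according to whether the hyperplane $H$ supporting $F_k$ separates $C$ from $C'$. In the separating case, $(C\to C'')\cdot l_{F_k}$ is itself a positive minimal path from $C$ to $C'$, and from the recursive definition one checks that $\Sigma(\nu) = \Sigma(\nu') \cup \{F_k\}$; appending $l_{F_k}$ to the inductive formula for $\nu'$ then yields the desired identity.

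The non-separating case is the main obstacle: here $H$ is crossed an odd number of times in $\nu'$, so there is a largest index $i<k$ with $F_i$ supported on $H$. The key manoeuvre is to commute $l_{F_i}$ past the intermediate edges $l_{F_{i+1}},\ldots, l_{F_{k-1}}$ using the local commutation / braid homotopies available in the rank-$2$ sub-arrangements spanned by pairs of consecutively crossed hyperplanes (this is exactly the standard local move from the affine Salvetti setting). Each such commutation step produces one auxiliary $\beta$-loop. Once $l_{F_i}$ is adjacent to $l_{F_k}$, the pair $l_{F_i}\cdot l_{F_k}$ can be absorbed into a single $\beta$-loop and removed from the path, leaving a positive path of length $k-2$ from $C$ to $C'$ to which the induction hypothesis applies.

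What remains is a bookkeeping check: the $\beta$-loops accumulated during the commutation, together with the ones furnished by the induction hypothesis on the shortened path, must rearrange into exactly the product $\prod_{G\in\Sigma(\nu)}\beta_G^\nu$, in the correct order. This reduces to the combinatorial observation that the parity-based recursive definition of $\Sigma$ is compatible with the removal of a same-hyperplane cancelling pair, which one reads off directly from the definition.
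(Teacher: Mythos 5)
Note first that the paper does not prove this statement: it is cited verbatim from \cite{salvetti1987tcr} (Lemma~12 and the surrounding discussion on p.~614), so there is no in-paper proof to compare yours against. Your reconstruction nevertheless has a real gap in the ``separating'' case. You peel off the \emph{last} edge, writing $\nu=\nu'\,l_{F_k}$, and assert that $\Sigma(\nu)=\Sigma(\nu')\cup\{F_k\}$. This fails whenever $\nu'$ itself crosses the hyperplane $H$ supporting $F_k$, which is entirely compatible with $H$ separating $C$ from $C'$ (the total number of $H$-crossings is then odd but $\geq 3$). Since whether $F_j$ belongs to $\Sigma$ is decided by a parity count over the faces \emph{after} $F_j$, deleting the last crossing of $H$ flips that parity for \emph{every} earlier $F_j$ supported on $H$, so $\Sigma(\nu')$ and $\Sigma(\nu)\setminus\{F_k\}$ disagree precisely on those faces. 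Appending $l_{F_k}$ to the inductive identity for $\nu'$ therefore yields $\big(\prod_{G\in\Sigma(\nu')}\beta_G^{\nu}\big)(C\to C')$, not the target product; reconciling those two products of $\beta$'s is not a bookkeeping triviality but essentially the content of the lemma, so the induction does not close.

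The way to avoid this is to peel off the \emph{first} edge: write $\nu=l_{F_1}\nu_1$. For every $j\geq 2$ the condition deciding $F_j\in\Sigma$ refers only to $F_{j+1},\ldots,F_k$ and is therefore identical for $\nu$ and $\nu_1$; hence $\Sigma(\nu)$ equals $\Sigma(\nu_1)$ or $\Sigma(\nu_1)\cup\{F_1\}$, with no retroactive changes. Moreover $l_{F_1}\beta_G^{\nu_1}l_{F_1}^{-1}=\beta_G^{\nu}$, so the prefactor $l_{F_1}$ slides past the inductive product, and what remains is a direct comparison of $l_{F_1}(C_1\to C')$ (with $C_1$ the target of $l_{F_1}$) against $(C\to C')$, respectively against $l_{F_1}^{2}(C\to C')=\beta_{F_1}^{\nu}(C\to C')$, according to whether the hyperplane of $F_1$ does or does not separate $C$ from $C'$; both are short checks on positive minimal paths, and neither the commutation argument nor the cancellation step in your second case is then needed. (As an aside, the description of $\Sigma$ reproduced in the paper---delete $F_j$ when its hyperplane supports an \emph{odd} number of later $F_i$'s---cannot be right as stated: with that reading $\Sigma$ of a positive minimal path would be the whole sequence, and the formula already fails for $k=1$. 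The parity is evidently meant the other way round, which is also what the induction just sketched requires.)
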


From this Lemma another useful result follows.

\begin{lemma}[Corollary 12 in \cite{salvetti1987tcr}]\label{lemma:12}
Let $F$,$G$ be two faces of codimension $1$ that are supported on the
same hyperplane. Then $\beta_F$ is homotopic to 
$$ (\prod_{i=1}^h \beta^{\nu}_{j_i})\beta_G  (\prod_{i=1}^h
\beta^{\nu}_{j_i})^{-1}, $$
where $\nu$ is a positive minimal path from $C_0$ to $\pi_G(C_0)$, and
$j_1,\ldots, j_h$ are the indices of the edges in $\nu$ that cross a
hyperplane that does not separate $C_0$ from $\pi_F(C_0)$, in the
order in which they appear in $\nu$. 
\end{lemma}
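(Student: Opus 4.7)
The plan is to mimic Salvetti's own proof of Corollary 12 in \cite{salvetti1987tcr}, using Lemma \ref{lm:salv12} as the main technical tool. The underlying geometric idea is that both $\beta_F$ and $\beta_G$ are meridians of the same complexified hyperplane $H$ supporting $F$ and $G$, and are therefore conjugate in $\fg(\tilde{\call S})$; the task is to identify the conjugating element explicitly in terms of the generators attached to $\nu$.

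First I would pick a positive minimal path $\tau$ from $(C_0)_G$ to $(C_0)_F$. Both endpoints lie on the same side of $H$ as $C_0$, because they lie in the $\pi_G$- and $\pi_F$-fibers of $\pi_G(C_0)$ and $\pi_F(C_0)$ and $H$ itself survives both projections; hence $\tau$ does not cross $H$. The hyperplanes crossed by $\tau$ are exactly those separating $(C_0)_G$ from $(C_0)_F$, which equals the symmetric difference of the set of hyperplanes separating $C_0$ from $(C_0)_G$ (i.e.\ crossed by $\nu$) and those separating $C_0$ from $(C_0)_F$. In particular, the hyperplanes appearing in both $\nu$ and $\tau$ are precisely those of $\nu$ that do \emph{not} separate $C_0$ from $(C_0)_F$ — i.e.\ those indexed by $j_1, \ldots, j_h$ in the statement.

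Next I would consider the positive path $\rho = \nu \cdot \tau$ from $C_0$ to $(C_0)_F$, and apply Lemma \ref{lm:salv12} to obtain
$$\rho \simeq \Bigl(\prod_{K \in \Sigma(\rho)}\beta^\rho_K\Bigr)\cdot (C_0 \to (C_0)_F).$$
By the recursive deletion rule \eqref{eq:sigma}, each hyperplane appearing in both $\nu$ and $\tau$ contributes only its $\tau$-occurrence to $\Sigma(\rho)$ (the earlier $\nu$-occurrence is deleted); unpaired crossings are kept. Using the identification of paired hyperplanes above, the $\tau$-side survivors paired with $\nu$-crossings can be rewritten as $\beta^\nu_{j_i}$ factors up to conjugation by $\nu$, while unpaired crossings match the minimality decomposition of the canonical path $(C_0 \to (C_0)_F)$ and can be absorbed on the right. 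The upshot is an expression
$$\rho \simeq \Bigl(\prod_{i=1}^h \beta^\nu_{j_i}\Bigr) \cdot \nu \cdot (C_0 \to (C_0)_F),$$
modulo homotopies internal to the minimal path.

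Finally, form the loop $\gamma = \rho \cdot l_F^2 \cdot \rho^{-1}$, which is a loop around $H$ at $F$ based at $C_0$ via the detour through $(C_0)_G$. Using the two expressions for $\rho$, we compute $\gamma$ in two ways:
$$\gamma \simeq \Bigl(\prod_{i=1}^h\beta^\nu_{j_i}\Bigr) \beta_F \Bigl(\prod_{i=1}^h \beta^\nu_{j_i}\Bigr)^{-1}
\quad \text{and} \quad
\gamma = \nu \cdot (\tau l_F^2 \tau^{-1}) \cdot \nu^{-1} \simeq \nu\, l_G^2\, \nu^{-1} = \beta_G,$$
where the last homotopy $\tau l_F^2 \tau^{-1} \simeq l_G^2$ based at $(C_0)_G$ follows from the fact that, since $\tau$ does not cross $H$ and both meridians encircle the same complexified hyperplane $H$, sliding the meridian along $\tau$ produces no further corrections once we have already accounted for the conjugation by the $\nu$-part outside. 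Comparing yields the desired identity. The main obstacle is the careful bookkeeping in the second paragraph: correctly matching the recursive deletion rule to the combinatorial condition defining the $j_i$, and showing that the factors coming from unpaired $\nu$-crossings together with the $\tau$-crossings reassemble into the canonical minimal path $(C_0 \to (C_0)_F)$ up to homotopy.
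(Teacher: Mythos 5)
This lemma is quoted in the paper directly from Salvetti (\cite{salvetti1987tcr}, Corollary~12) with no proof supplied, so there is no in-paper argument to compare against; I can only assess your reconstruction on its own terms. Unfortunately, it has a genuine gap at its core.

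The central unjustified step is the claim that $\tau\, l_F^2\, \tau^{-1} \simeq l_G^2$ based at $(C_0)_G$, which you support only by the heuristic that ``sliding the meridian along $\tau$ produces no further corrections.'' This is not true in general, and in fact it cannot be: if both that claim and your rewriting of $\rho=\nu\tau$ via Lemma~\ref{lm:salv12} held, they would together force $\beta_F\simeq\beta_G$. To see this, apply Lemma~\ref{lm:salv12} to the two positive paths $\nu\tau$ and $\nu\tau\, l_F^2$ from $C_0$ to $(C_0)_F$. The surviving crossings of $\Sigma(\nu\tau)$ and $\Sigma(\nu\tau\, l_F^2)$ agree except that the latter keeps exactly one $F$-crossing, whose associated generator $\beta^{\nu\tau l_F^2}_F$ equals $\nu\tau\, l_F^2\,\tau^{-1}\nu^{-1}=\gamma$. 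Substituting one expression into the other and cancelling the common prefix yields $\gamma\simeq\beta_F$ \emph{exactly}, with no conjugating factor. Your claim $\gamma\simeq\beta_G$ would therefore reduce the lemma to the statement $\beta_F\simeq\beta_G$, which makes the nontrivial conjugator $\prod_i\beta^\nu_{j_i}$ in the statement vacuous -- that cannot be what Salvetti proved. The error is precisely that the homotopy moving the meridian from $F$ to $G$ \emph{is} obstructed by the hyperplanes crossed by $\tau$, and the obstruction is encoded in the $\beta^\nu_{j_i}$.

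There are further problems. The displayed homotopy $\rho\simeq(\prod_{i}\beta^\nu_{j_i})\cdot\nu\cdot(C_0\to(C_0)_F)$ is not composable: $\nu$ ends at $(C_0)_G$ while $(C_0\to(C_0)_F)$ starts at $C_0$. The passage from the factors $\beta^\rho_K$ with $K\in\Sigma(\rho)$ (whose conjugating prefixes involve $\nu$ \emph{and} an initial segment of $\tau$) to the factors $\beta^\nu_{j_i}$ (whose conjugating prefixes lie entirely in $\nu$) is asserted ``up to conjugation by $\nu$'' without any argument; it is exactly the hard part of the bookkeeping, and you acknowledge as much at the end. Finally, even granting every informal step, your conclusion reads $\beta_G\simeq(\prod_i\beta^\nu_{j_i})\,\beta_F\,(\prod_i\beta^\nu_{j_i})^{-1}$, which is the \emph{opposite} conjugation from the statement $\beta_F\simeq(\prod_i\beta^\nu_{j_i})\,\beta_G\,(\prod_i\beta^\nu_{j_i})^{-1}$. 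A correct proof must account for the hyperplanes crossed by $\nu\tau$ more carefully, for instance by comparing $\Sigma(\nu\,l_G^2\,\tau)$ and $\Sigma(\nu\tau)$ directly, rather than trying to transport the meridian along $\tau$ for free.
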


\addtocounter{conto}{1}
\paragraph{\thesubsection.\theconto. Relations.}
For every face $G\in \tilde{\call F}_2$ consider a chamber $C>G$ and
let $C'$ be its opposite chamber with respect to $G$. Consider a
minimal positive path $\omega$ from $C$ to $C'$. Let us then
consider the set $h(G):=\{F_1,\ldots F_k\}$ of
the codimension 1 faces adjacent to $G$, indexed according to the order in
which the positive minimal path $\omega$ `crosses' them. This ordering
is well defined up to cyclic permutation. Let now for $i=1,\ldots k$
$F_{i+k}$ be the facet opposite to $F_i$ with respect to $G$. Define a
path
\begin{equation}\label{eq:alfa}
\alpha_G(C):=l_{F_1}l_{F_2}\ldots l_{F_{2k}}.
\end{equation}

Salvetti introduces a set of relations associated with $G$:
$$R_G: \quad\beta_{F_1}\ldots \beta_{F_k} = \beta_{F_2}\ldots
\beta_{F_k}\beta_{F_1} =\ldots $$
stating the equality of all cyclic permutations of the product.
In fact, for every cyclic permutation $\sigma$ of $\{1,\ldots ,k\}$
\begin{equation}\label{eq:alfabeta}
\beta_{F_{\sigma(1)}} \cdots \beta_{F_{\sigma(k)}} \simeq (C_0\to
\widetilde{C})\alpha_G(\widetilde{C})(C_0\to \widetilde{C})^{-1}
\end{equation}
where $\widetilde{C}:=(C_0)_G$ and $\simeq$ means homotopy.

\addtocounter{conto}{1}
\paragraph{\thesubsection.\theconto. Presentation.}
 One of the results of \cite{salvetti1987tcr} is that the fundamental
group of $M(\tilde{\scr A})$ can be presented as
$$
\fg(\tilde{\call S})=\langle \beta_F,\, F\in \tilde{\call F}_1 \mid
R_G,\, G\in \tilde{\call F}_2 \rangle.
$$

\subsection{Generators}

We describe the action of $u\in\Lambda$ on a path
$\gamma\in\tilde{\call G}$ by writing $u.\gamma$ for the path obtained by translation of $\gamma$
with $u$.

\begin{defi}
Choose a basis $u_1,\ldots u_n$ of $\Lambda$, and for $i=1,\ldots d$
let $ \omega_i =\omega^{(1)}_i$ be the positive minimal path of $\tilde{\call G}$ from $C_0$ to
  $u_iC_0$ obtained by crossing the faces met by the straight line segment $s_i$
  (which connects from $x_0$ to $u_ix_0$). Also, for $k\geq 1$ let
  $\omega_i^{(k)}=\omega_i(u_i.\omega_i^{(k-1)})$. Similarly, let
  $\omega_i^{(-1)}:=\omega_i^{-1}$ and
  $\omega_i^{(-k)}:=\omega_i^{(-1)} (u_i^{-1}.\omega_i^{(1-k)})$.
Given any $u\in \Lambda$ write $u=u_1^{q_1}\cdots u_n^{q_n}$ and
define 

\begin{equation}
\omega_u:=\omega_1^{(q_1)}\; u_1^{q_1}.\omega_2^{(q_2)}\; \cdots
\big(\prod_{j=1}^{r-1}u_n^{q_n}\big).\omega_r^{(q_n)}. 
\end{equation}

Let then $$\tau_i:=p_*(\omega_i),\quad\quad\tau_u:=p_*(\omega_u).$$
\end{defi}

Notice that a path $\omega_u$ needs not be minimal, nor positive. In
fact, it is positive if and only if $u$ has nonnegative coordinates in
$\Lambda$. Given $i$ and $k$, the path $\omega_i^{(k)}$ is positive if and only if
$k\geq 0$, and in this case it is also minimal.

\begin{figure}[tp]
  \centering
  \begin{tikzpicture}[scale = 2]
    \tikzstyle{every path} =
    [black, line width = 1pt];
    \tikzstyle{every node} = 
    [circle, fill, inner sep=1pt, outer sep=2pt, draw];

    \path [fill = gray, opacity = 0.3] (-1,2) -- (-1,1) -- (0,0) -- (1,1)
    -- (2,0) -- (3,1) -- (3, 2) -- cycle;
    
    \draw (-1,0) -- (3,0);
    \draw [red] (-1,2) -- (3,2) (-.5,2) -- (-.5,.6);
    \draw [blue] (1,0) -- (1,2.5);
    
    \draw node (A) at (0,0) {}
    node (B) at (2,0) {} node (C) at (1,1) {};

    \draw [red] node (E) at (-.5,2) {}
    node (G) at (-.5,.6) {};
      
    \begin{scope}
      \tikzstyle{every node} = [draw = none, fill = none]

      \path node [below] (AA) at (0,0) {$s_j(t_i')$}
      node [below] (BB) at (2,0) {$s_j(t_{i+1}')$}
      node [below] (DD) at (1,0) {$s_j(t_i)$}
      node [above right] (CC) at (1,1) {$P_i$}
      node [above] (EE) at (-.5,2) {$s_j'(t)$}
      node [right] (FF) at (-.5,1.5) {$w(j,t)$}
      node [right] (GG) at (-.5,.6) {$r_j(t)$}
      node [blue, right] (EE) at (1,2.3) {$F_i + i (C_0)_{F_i}$}
      node [below right] (HH) at (.4,.5) {$r_{i,1}$}
      node [below left] (II) at (1.6,.5) {$r_{i,2}$};
    \end{scope}

    \path [draw]   
    (-1,1) -- (0,0) -- (1,1) -- (2,0) -- (3, 1);

    \draw[->] (0,0) -- (.7,.7);
    \draw[->] (1,1) -- (1.3,.7);
    
    \path [draw, red, rounded corners = 10pt]
    (-1,1.1) -- (0,0.1) -- (1,1.1) -- (2,0.1) -- (3, 1.1);
  \end{tikzpicture}
  \caption{Construction for the proof of Lemma \ref{lm:tau}}
\end{figure}
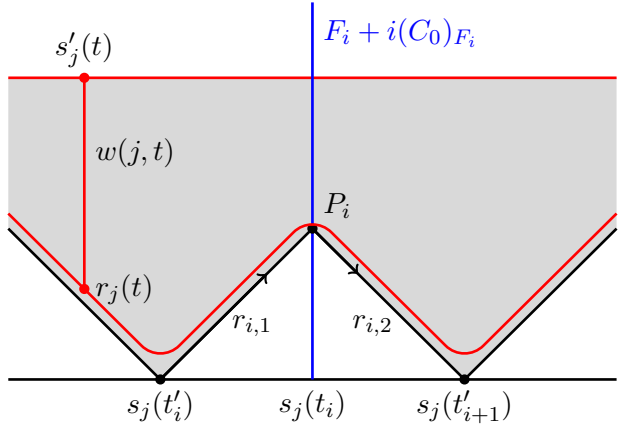

\begin{lemma}\label{lm:tau} In $\fg(\call M(\scr A))$, $p(\omega_i^{(k)} )=
  \tau_i^k$ and $\tau_i\tau_j = \tau_j\tau_i$ for all $i,j$. The
  $\epsilon_*\tau_i$ generate $\fg(T_\Lambda)$.
\end{lemma}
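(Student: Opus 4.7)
The statement comprises three claims of rather different flavors, and I would handle them separately.

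The identity $p(\omega_i^{(k)}) = \tau_i^k$ is a straightforward induction on $|k|$. The covering map $p$ is $\Lambda$-equivariant, so $p(u.\gamma)=p(\gamma)$ for any path $\gamma$ and any $u\in\Lambda$. Unwinding the recursive definition $\omega_i^{(k)} = \omega_i\cdot (u_i.\omega_i^{(k-1)})$ and applying $p_*$ gives $p(\omega_i^{(k)}) = \tau_i \cdot p(\omega_i^{(k-1)})$, so induction closes the argument. The negative powers follow by an identical argument from $\omega_i^{(-k)} = \omega_i^{-1}\cdot (u_i^{-1}.\omega_i^{(1-k)})$.

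The generation statement is then essentially a bookkeeping exercise in covering space theory. From Lemma \ref{lm:sequence} we already know $\epsilon$ coincides with the canonical projection $\fg(\compl)\to \fg(\compl)/p_*\fg(M(\tilde{\scr A}))\cong\Lambda\cong \fg(\toro)$, under which a loop is sent to the deck transformation needed to close its lift from $x_0$. Since the lift of $\tau_i$ is $\omega_i$, which ends at $u_i.x_0$, we have $\epsilon_*\tau_i = u_i$. Because $u_1,\dots, u_n$ form a basis of $\Lambda$, this gives the claim.

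The real content is the commutativity $\tau_i\tau_j = \tau_j\tau_i$. Since deck transformations determine homotopy classes on the cover, this is equivalent to showing that the lift of $[\tau_i,\tau_j]$ from $x_0$ is null-homotopic in $M(\tilde{\scr A})$. That lift is the loop
\[
\delta \;=\; \omega_i\cdot(u_i.\omega_j)\cdot((u_iu_j).\omega_i^{-1})\cdot(u_j.\omega_j^{-1}),
\]
which follows the boundary of the real parallelogram $P\subset\mat R^n$ with vertices $x_0, u_ix_0, u_iu_jx_0, u_jx_0$. The plan is to null-homotope $\delta$ using a perturbed version of $P$ in $\mat C^n$. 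Fix a vector $v\in\mat R^n$ generic enough that $\langle\alpha,v\rangle\neq 0$ for every defining functional $\alpha$ of $\tilde{\scr A}$. Then, for any $t>0$, the translated parallelogram $P+itv$ lies entirely in $M(\tilde{\scr A})$: a point $x+itv$ satisfies a defining equation $\langle\alpha,z\rangle = a'\in\mat R$ only if $t\langle\alpha,v\rangle = 0$, which is ruled out by the choice of $v$. Hence $\partial(P+itv)$ bounds a $2$-disk in $M(\tilde{\scr A})$ and is null-homotopic.

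What remains is to identify $\delta$, up to homotopy in $M(\tilde{\scr A})$, with the boundary of this perturbed disk (after transporting the base point via the path $\sigma(s) = x_0 + istv$, which I would check also stays in $M(\tilde{\scr A})$). For each of the four sides, the Salvetti edge path (e.g.\ $\omega_i$) and the perturbed straight segment (e.g.\ the segment from $x_0+itv$ to $u_ix_0+itv$) agree at their endpoints after base-point transport, and the rectangular tube they span lifts into $M(\tilde{\scr A})$ by the same genericity argument used for $P+itv$ itself---this is precisely the picture encoded in the accompanying figure, with $w(j,t)$ playing the role of the imaginary perturbation $tv$ near each hyperplane crossing. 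Gluing the four side homotopies to the disk $P+itv$ produces the sought null-homotopy of $\delta$. The main obstacle is the careful setup of these local rectangular homotopies at the points where the straight segments $s_i$, $s_j$ meet the hyperplanes: one needs to verify that the perturbation $tv$ can be chosen uniformly for all four sides so that no cell in the interpolation re-enters a complex hyperplane. This is essentially a genericity-and-transversality check exploiting that each complex hyperplane has real codimension two, so a generic imaginary direction pushes the whole $2$-dimensional tube off it.
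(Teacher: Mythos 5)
Your approach is essentially the paper's: in both, the commutativity $\tau_i\tau_j=\tau_j\tau_i$ comes down to pushing the real parallelogram spanned by the $s_i$ into a generic imaginary direction, and then connecting the Salvetti edge-paths $\omega_i$ to the perturbed straight segments via rectangular tubes (your "$w(j,t)$ plays the role of $tv$" remark is exactly the identification the paper makes). Two small cautions. First, your formula for the lifted commutator has a translation slip: the last two sides should be $(u_j.\omega_i)^{-1}$ (from $u_iu_jx_0$ to $u_jx_0$) and $\omega_j^{-1}$ (from $u_jx_0$ to $x_0$), so $\delta=\omega_i\,(u_i.\omega_j)\,(u_j.\omega_i)^{-1}\,\omega_j^{-1}$; your $(u_iu_j).\omega_i^{-1}$ and $(u_j.\omega_j^{-1})$ do not concatenate. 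Second, the genericity condition you impose on $v$ (that $\langle\alpha,v\rangle\neq 0$ for all defining $\alpha$) is exactly what is needed to push the parallelogram itself off the complex hyperplanes, but for the four side-tubes to stay in $M(\tilde{\scr A})$ you additionally want $v$ to lie in the base chamber $C_0$ — this is what the paper uses to get $s_j(t_h)\in F+i(C_0)_F$ and invoke convexity of chambers. Since there are only finitely many $\alpha$, one can always choose such a $v\in C_0$ that is also generic in your sense, so this is a small fix rather than a gap; but as written the two conditions are not interchangeable.
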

\begin{proof} 
Let $X=f(T_\Lambda)$ be the image of the map $f$ in the proof of 
Lemma \ref{lm:section}, where we now choose $y$ to be a point of our
base chamber $C_0$.

Let the straight line segment $s_j$
be parametrized by
$$ s_j(t ):= t x_0 +
(1-t) u_jx_0,\quad 0\leq t \leq 1.$$

The Minkowski sum $X':= s_1 + \cdots + s_n \subset \mathbb R^n$ is a
fundamental region for the action of $\Lambda$ on $\mathbb R^n$. For
$Y:= X' + iy \subseteq M(\tilde{\scr A})$ we have $p(Y)=X$. In particular, the segments $s_j$ map
under $\epsilon$ to a
system of generators of $\fg(T_\Lambda)$ - in fact, the one associated
with the basis $u_1,\ldots , u_n$ of $\Lambda$.

We will next show that for all $j=1,\ldots , d$ the path
$$
s'_j(t):= s_j(t) + iy
$$
is homotopic to the positive minimal path $\omega_j\in (C_0\to
u_jC_0)$.

Indeed, write $\omega_j=l_{F_1}\ldots l_{F_k}$ and let $t_1,\ldots
t_k$ be such that $s_j(t_i)\in F_i$ for all $i=1,\ldots ,k$. Also,
write $C_i$, $C_{i+1}$ for the source and target chambers of $l_{F_i}$ (note: $C_{k+1}=u_jC_0$) and
for $i=1,\ldots , k-1$ choose $t'_i \in ]t_{i-1}, t_{i}[$, $t'_k:=1$, $t'_0:=0$. Then
$s'_j(t'_i)\in C_i$ for all $i=1,\ldots , k$.

Recall now that the subset of $M(\tilde{\scr A})$ with real part $x\in
F$ consists of points with imaginary part belonging to the chambers of
$\tilde{A}_F$. In fact, the edge $l_{F_i}$, directed from $C_i$ to
$C_{i+1}$, is by construction (\cite[p. 608]{salvetti1987tcr}) the
union of two segments, one from a point in $P'_i\in C_i+0i$ to a point $P_i\in
F+i(C_0)_F$, the other from $P_i$ to a point $P'_{i+1}\in C_{i+1} +
0i$. We will parametrize these segments as $r_{i,1}(t)$, $t'_i\leq t
\leq t_i$ and $r_{i,2}(t)$, $t_i\leq t \leq t'_{i+1}$. Together, they
give a parametrization $r_j(t)$, $0\leq t \leq 1$ of the positive
minimal path $\omega_j$.

The key observation is now that, having chosen $y\in C_0$, we have
that $$s_j(t_h)\in F+ i(C_0)_F \textrm{ for all } h=1,\ldots , k.$$
Since chambers of arrangements are convex, for all $t\in [0,1]$ there
is a straight line segment $w(j,t)$ joining $s_j(t)$ and $r_j(t)$ in
$M(\tilde{\scr A})$.

The (topological) disk $W_j:=\bigcup_{t\in [0,1]}w(j,t)$ defines the
desired homotopy between $s_j$ and $\omega_j$.

Now fix $i,j\in\{1,\ldots ,n\}$ clearly $s_i\, u_i.(s_j)$ is homotopic to $s_j\, u_j.(s_i)$, and in
$\fg(M(\scr A))$ we thus have
$$
\tau_i\tau_j = 
p_*([\omega_i\, u_i.\omega_j])=
p_*([s_i\, u_i.s_j])
$$$$
=p_*([s_j\, u_j.s_i])=
p_*([\omega_j\, u_j.\omega_i])=\tau_j \tau_i.
$$
\end{proof}

\def\tf{\tilde{\call F}}
\begin{defi}
Let $\call Q$ be the set of faces that intersect the
fundamental region $X'$ of the proof of Lemma \ref{lm:tau}. 
Then $\call Q$ contains $C_0$ and $x_0$.
Let $\call Q_i:= \call Q\cap \tf_i$.
In particular, $\call Q_1$ contains the set of faces crossed by $s_i$, for
all $i$.
\end{defi}

\def\otf{\overline{\tilde{\call F }}}
\def\tf{\tilde{\call F}}

Recall the parametrization $s_i(t)$ of the segments $s_i$, and call  $\call B$ the set of
faces of the polyhedron $X'$ which intersect the convex
hull of $\{s_{i}([0,1[)\mid i\in I\}$ for some $I\subseteq \{1,\ldots
,d\}$. Notice that every face of
$X'$ is a translate of some face in $\call B$ by an element $u_1^{m_1}\cdots
u_n^{m_n}$ with $m_1,\ldots ,m_n \in \{0,1\}$.

\begin{defi}\label{df:fbar}
  Let $$\otf := \{F\in \call Q \mid F\cap B=\emptyset \textrm{ for all
  }B\not\in \call B\}$$
\end{defi}

Then $\otf$ is a set of representatives for the orbits of the action
of $\Lambda$ on $\tf$.

\begin{defi} For any given $F\in \tf$ let $\overline F $ be the unique
  element of $\Lambda F\cap\otf$. Then, call $u_F$ the unique element
  of $\Lambda$  such that $F=u_F\overline F$.
  
  Define
  $$
  \Gamma_F:= \omega_{u_F}(u_F.\beta_{\overline{F}})\omega_{u_F}^{-1} 
  $$
\end{defi}
\begin{oss} $\,$\\[-10pt]\begin{itemize}\label{rm:action}
\item[(1)] For all $F\in\tilde{\call F}_1$ and all $u\in \Lambda$
$$p_*(\Gamma_{uF}) = \tau_up_*(\Gamma_F)\tau_u^{-1}.$$

\item[(2)] If $F\in \otf_1$, then $\Gamma_F=\beta_F$.
\item[(3)] If $F\in \call Q$, then $u_F$ has nonnegative coordinates
  with respect to $u_1,\ldots , u_n$. (Recall the discussion before
  Definition \ref{df:fbar}.)
\item[(4)] Since $X'$ is convex, $\call Q_0$ contains the vertices of
  a positive minimal path  between any two elements of $\call Q_0$.
\end{itemize}
\end{oss}

\begin{defi} For $j=1,\ldots ,d$ let
$$
\Omega_j :=\{ F\in \tilde{\call F}_1 : F\textrm{ is crossed by }
\omega_j^{(k)} \textrm{ for some }k\},
$$
And set $\Omega:=\bigcup_j \Omega_j$.
\end{defi}

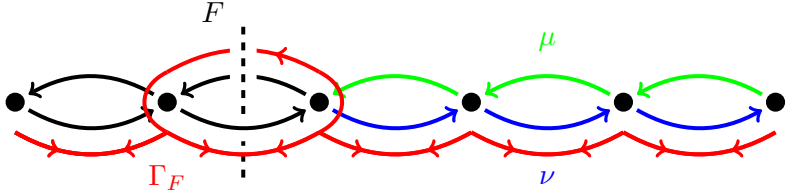
\begin{figure}[tp]
  \centering
  \begin{tikzpicture}[scale = 2]
    \tikzstyle{every path} =
    [black, line width = 1.5pt];
    \tikzstyle{every node} = 
    [circle, fill, inner sep=2pt, outer sep=3pt, draw];

    \foreach \x in {0,...,5}
    \draw node (A\x) at (\x,0) {};

    \foreach \x in {0,...,4} {
      \pgfmathtruncatemacro{\xx}{\x+1}

      \ifthenelse{\x > 1}
      {\path[->] (A\xx) edge [bend right = 30, green] (A\x);}
      {\path[->] (A\xx) edge [bend right = 30] (A\x);};
    }

    \draw[decoration={markings,mark=at position 0.3 with {\arrow{>}}}] 
    (2,.2) edge [red, bend right = 30, postaction={decorate}] (1,.2);

    \draw[white, line width = 10pt] (1.5,-.5) -- (1.5, .5);
    \draw[dashed] (1.5,-.5) -- (1.5, .5);

    \foreach \x in {0,...,4}{
      \pgfmathtruncatemacro{\xx}{\x+1}
      \path (A\xx) edge [white, line width = 10pt, bend left = 30] (A\x);
      
      \ifthenelse{\x > 1} 
      {\path[->] (A\x) edge [bend right = 30, blue] (A\xx);}
      {\path[->] (A\x) edge [bend right = 30] (A\xx);}
      
      \draw[decoration={markings,mark=at position 0.3 with {\arrow{>}}}] 
      (\xx,-.2) edge [red, bend left = 30, postaction={decorate}] (\x,-.2);

      \draw[decoration={markings,mark=at position 0.3 with {\arrow{>}}}] 
      (\x,-.2) edge [red, bend right = 30, postaction={decorate}] (\xx,-.2);
    }

    \draw[decoration={markings,mark=at position 0.3 with {\arrow{>}}}] 
    (0,-.2) edge [red, bend right = 30, postaction={decorate}] (1,-.2);
    
    \path [draw, red] (1,-.2) .. controls (.8,-.1) and (.8,.1) .. (1,.2)
    (2,-.2) .. controls (2.2,-.1) and (2.2,.1) .. (2,.2);

    \path node [draw = none, fill = none, blue] (N) at (3.5,-.5) {$\nu$}
    node [draw = none, fill = none, green] (M) at (3.5,.4) {$\mu$}
    node [draw = none, fill = none, red] (G) at (1,-.5) {$\Gamma_F$}
    node [draw = none, fill = none] (F) at (1.3,.6) {$F$};
  \end{tikzpicture}
  \caption{Construction for the proof of lemma \ref{lemma:step1}}
  \label{fig:step1}
\end{figure}

\begin{lemma}\label{lemma:step1} For all $i=1,\ldots, n$, the subgroup of $\fg(M(\tilde{\scr
    A}))$ generated by the elements $\beta_F$ with $F\in \Omega_i$ is
  contained in the subgroup generated by the $\Gamma_F$, $F\in \Omega_i$.
\end{lemma}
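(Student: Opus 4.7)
The plan is to use Salvetti's rewriting rule (Lemma \ref{lm:salv12}) applied to the distinguished positive paths $\omega_i^{(k)}$ in order to express each $\beta_F$, $F\in\Omega_i$, as a product of conjugates of the $\Gamma_{F'}$, $F'\in\Omega_i$, and their inverses.

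Fix $F\in\Omega_i$. By the symmetry $k\leftrightarrow -k$ one may assume $k\geq 1$ and pick an index $j_0$ with $F=F_{j_0}$ in the decomposition $\omega_i^{(k)}=l_{F_1}\cdots l_{F_m}$. Let $\mu:=l_{F_1}\cdots l_{F_{j_0-1}}$, which is a positive minimal sub-path of $\omega_i^{(k)}$ from $C_0$ to a chamber $C_{j_0}$ incident to $F$; note that every face $\mu$ crosses is already crossed by $\omega_i^{(k)}$ and hence lies in $\Omega_i$. Write $H:=\langle\Gamma_{F'}\mid F'\in\Omega_i\rangle\subseteq\fg(M(\tilde{\scr A}))$. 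The natural candidate to target first is the Salvetti \emph{prefix-loop} $\beta_F^{\omega_i^{(k)}}:=\mu\,l_F^2\,\mu^{-1}$, which we would show lies in $H$ by directly comparing it with $\Gamma_F$.

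The central step is therefore the identification $\beta_F^{\omega_i^{(k)}}\equiv\Gamma_F\pmod{H}$. Expanding $\Gamma_F=\omega_{u_F}(u_F.\beta_{\overline F})\omega_{u_F}^{-1}$, its prefix $\omega_{u_F}\cdot(u_FC_0\to (u_FC_0)_F)$ is, like $\mu$, a positive path from $C_0$ to a chamber incident to $F$. These two positive paths generally end at different chambers in the star of $F$, and so are reconciled by an excursion whose homotopy class — by Lemma \ref{lm:salv12} — decomposes into $\beta_G^\nu$-factors indexed by faces $G$ that (through our choice of basis and the fact that $F\in\Omega_i$) again lie in $\Omega_i$. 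Using Lemma \ref{lemma:12} to convert $\beta_{\overline F}$-terms into $\beta_F$-terms supported on the same hyperplane, together with an induction on $|k|$ (or equivalently on the position $j_0$), each of these factors is pushed into $H$, leaving only $\Gamma_F$ itself.

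Once $\beta_F^{\omega_i^{(k)}}\in H$ is established, the conclusion follows by extending $\mu$ to a positive path $\nu$ from $C_0$ to $(C_0)_F$ whose continuation past $F$ crosses only faces in $\Omega_i$ — existence being guaranteed by Remark \ref{rm:action}(4) combined with the local geometry of the star of $F$. Applying Lemma \ref{lm:salv12} to $\nu$ gives
\[
 \nu\simeq\Bigl(\textstyle\prod_{G\in\Sigma(\nu)}\beta_G^\nu\Bigr)\,(C_0\to (C_0)_F),
\]
and solving for $\beta_F=(C_0\to(C_0)_F)\,l_F^2\,(C_0\to(C_0)_F)^{-1}$ exhibits $\beta_F$ as a product of $\beta_G^\nu$'s with $G\in\Omega_i$, each already in $H$ by the central step applied to the appropriate sub-path. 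The main obstacle will be this central step: matching the two prefixes requires a careful induction together with the parallel-face control provided by Lemma \ref{lemma:12}, especially when $u_F$ is not itself a power of $u_i$.
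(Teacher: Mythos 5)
Your proposal contains a decisive error at the outset: the claimed ``symmetry $k\leftrightarrow -k$'' does not exist, and the two signs of $k$ behave very differently. If $F=u_i^k\overline F$ with $k\geq 0$, then $\omega_{u_F}$ is a positive minimal path, and the concatenation $\omega_{u_F}\,(u_FC_0\to(u_FC_0)_F)$ agrees with a positive minimal path $(C_0\to(C_0)_F)$, which immediately gives $\Gamma_F=\beta_F$: there is nothing left to prove. The entire content of the lemma is the case $k<0$, where $\omega_{u_F}$ is the \emph{inverse} of a positive path, so that $(C_0)_F\neq(u_FC_0)_F$ and $\Gamma_F$ is a loop that crosses $F$ from the ``wrong'' side. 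By reducing to $k\geq 1$ you have assumed away precisely the difficult case. Your subsequent invocation of Lemma \ref{lemma:12} compounds the problem: that lemma relates $\beta_F$ and $\beta_G$ for $F,G$ on the \emph{same} hyperplane, whereas $\overline F$ and $F=u_F\overline F$ lie on distinct (translated) hyperplanes, so it simply does not apply to ``converting $\beta_{\overline F}$-terms into $\beta_F$-terms''. Finally, the ``central step'' in which the two prefixes are reconciled by ``an excursion whose homotopy class decomposes into $\beta_G^\nu$-factors'' is asserted rather than proved; you do not actually set up the induction that would make that assertion go through.

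The paper's argument is much shorter and worth contrasting. For $k<0$, write $C'=(C_0)_F$, let $\mu$ be the positive minimal path $C_0\to C'$ along $s_i$ and $\nu$ the positive minimal path $C'\to C_0$ along $s_i$ (of length $d(F)$). One checks directly, by cancelling the backtracking in $\omega_{u_F}\,(u_FC_0\to(u_FC_0)_F)$, that $\Gamma_F\simeq\nu^{-1}l_F^2\nu$, while by definition $\beta_F=\mu l_F^2\mu^{-1}$; hence $\beta_F=(\mu\nu)\,\Gamma_F\,(\mu\nu)^{-1}$. The positive loop $\mu\nu$ decomposes, via Lemma \ref{lm:salv12}, as a product of $\beta_{F'}$'s with $F'$ crossed by $\mu$ — these satisfy $F'\in\Omega_i$ and $d(F')<d(F)$ — so one inducts on $d(F)$, the base case $d(F)=0$ again giving $\Gamma_F=\beta_F$. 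That induction on the distance from $C_0$ is the missing ingredient in your plan, and it replaces entirely the vague prefix-matching you propose.
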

\begin{proof}
Let w.l.o.g. $F\in \Omega_1$, and say that $F=u_1^k\overline F$. 
If $k\geq 0$, by construction we have $\Gamma_F=\beta_F$.

Suppose then $k<0$, and in this case $C':=(C_0)_F\neq
(u_1^kC_0)_F$.  Let $\nu$ denote the positive minimal path from
$C'$ to $C_0$ that follows the segments $s_1$. We argue by induction
on the length $d(F)$ of $\nu$: if $d(F)= 0$ we have in fact
$\Gamma_F=\beta_F$.

Now let $d(F)>0$. Then 
$$
\Gamma_F\simeq \nu^{-1} l_F^2 \nu;\quad \beta_F=\mu l_F^2 \mu^{-1}
$$
where $\mu$ is the positive minimal path from $C_0$ to $C'$ following
 $s_1$. Thus 
$$
\beta_F= \mu\nu\nu^{-1} l_F^2 \nu (\mu\nu)^{-1} = (\mu\nu) \Gamma_F (\mu\nu)^{-1}
$$
where $\mu\nu$ is the product of all $\beta_{F'}$ with $F'$ crossed by
$\mu$ - therefore, with $F'\in \Omega_1$ and $d(F')<d(F)$. By induction, the claim follows.
\end{proof}

\begin{lemma}\label{lm:fundgen} The set $\{\Gamma_F \mid F\in \Omega\}$
  generates $\fg(\call M (\tilde{\scr A}))$.
\end{lemma}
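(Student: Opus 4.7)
The plan is to appeal to Salvetti's presentation of $\fg(\tilde{\call S})$ recalled in Section \ref{subsec:gen}: since $\{\beta_F : F \in \tilde{\call F}_1\}$ already generates $\fg(M(\tilde{\scr A}))$, it is enough to prove that every $\beta_F$ lies in $\langle \Gamma_{F'} : F' \in \Omega\rangle$. Lemma \ref{lemma:step1} handles this for $F \in \Omega$, so the heart of the matter is to reduce an arbitrary $F \in \tilde{\call F}_1$ to the case $F \in \Omega$.

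The first step would be to show that every hyperplane of $\tilde{\scr A}$ carries a codimension-$1$ face lying in $\Omega$. Indeed, if $F$ is supported on the hyperplane $H_F$, then $H_F$ has a defining integer vector $\alpha \in \mat Z^n \setminus \{0\}$, which must have some nonzero coordinate $\alpha_i$; this makes the line $L_i = x_0 + \mat R u_i$ transversal to $H_F$. The genericity assumption on $x_0$ (stated at the start of Section \ref{subsec:gen}) forces $L_i \cap H_F$ to lie in the relative interior of a unique codimension-$1$ face $G$ of $\tilde{\scr A}$, and since as a point set $L_i$ is covered by the paths $\omega_i^{(k)}$ for $k \in \mat Z$, we have $G \in \Omega_i \subseteq \Omega$.

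Next I would invoke Salvetti's Corollary 12 (our Lemma \ref{lemma:12}) applied to the pair $F, G$ on $H_F$, writing $\beta_F = w \beta_G w^{-1}$ where $w$ is a product of loops of the form $\beta^{\nu}_{F_{j_h}}$ built from a positive minimal path $\nu$ from $C_0$ to $\pi_G(C_0)$. Since $\beta_G \in \langle \Gamma_{F'} : F' \in \Omega\rangle$ by Lemma \ref{lemma:step1}, it remains to show the same for $w$. Expanding each $\beta^{\nu}_{F_{j_h}}$ as $\nu_h\, l_{F_{j_h}}^2\, \nu_h^{-1}$ (with $\nu_h$ the initial segment of $\nu$ of length $j_h - 1$) and iterating Lemma \ref{lm:salv12} rewrites $w$ as a word in the standard generators $\beta_{F''}$ attached to the codimension-$1$ faces $F''$ crossed by $\nu$.

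The argument closes provided $\nu$ can be chosen so that all its crossings lie in $\Omega$. I would construct such a $\nu$ by imitating the inductive definition of the paths $\omega_u$ from Section \ref{subsec:gen}: move from $C_0$ toward $\pi_G(C_0)$ by a succession of segments along translated coordinate lines $L_i + v$, each of which contributes only crossings in some $\Omega_j$. Combining all of the above, every $\beta_F$ is then a word in generators $\beta_{F''}$ with $F'' \in \Omega$, and the conclusion follows from a final application of Lemma \ref{lemma:step1}.

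The main obstacle I anticipate is this last path construction. Unlike the endpoints of the $\omega_u$, which are lattice translates of $C_0$, the chamber $\pi_G(C_0)$ is determined by the fiber of $\pi_G$ through $C_0$ and typically does not sit at an integer lattice point; producing an honest \emph{positive minimal} path from $C_0$ to $\pi_G(C_0)$ that stays within the union of coordinate-line translates therefore seems to require a careful induction on the combinatorial distance from $G$ to $x_0$, absorbing any unavoidable detours into further generators $\beta_{F''}$ with $F'' \in \Omega$ via Lemma \ref{lm:salv12}.
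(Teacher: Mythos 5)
Your overall strategy matches the paper's: use Salvetti's generating set $\{\beta_F\}$, find a face $G\in\Omega$ on the hyperplane supporting $F$, conjugate $\beta_F$ to $\beta_G$ via Lemma~\ref{lemma:12}, and finish with Lemma~\ref{lemma:step1}. You even correctly locate $G$ as the unique codimension-$1$ face of $\tilde{\scr A}$ through which the coordinate line $L_i$ meets the hyperplane $H_F$. The gap is in the last step, which you flag yourself as the ``main obstacle'': you worry that $\pi_G(C_0)$ does not sit at a lattice translate of $C_0$, and so you propose reaching it by a concatenation of segments along \emph{translated} coordinate lines $L_i+v$. This does not work, for a concrete reason: $\Omega_j$ is defined as the set of faces crossed by $\omega_j^{(k)}$, and all such faces sit on the single untranslated line $L_j$ through $x_0$. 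Crossings of $L_j+v$ for a nontrivial translation $v$ are generally not in $\Omega$, so the $\beta_{F''}$ you would produce cannot be fed into Lemma~\ref{lemma:step1}.

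The obstacle you anticipate is in fact illusory. The chamber $(C_0)_G=\pi_G(C_0)$ is the unique chamber adjacent to $G$ on the $C_0$ side of $H_F$, and since $L_i$ passes through the relative interior of $G$, the piece of $L_i$ immediately preceding $G$ lies inside exactly this chamber. Hence $(C_0)_G$ \emph{does} lie on $L_i$, and the positive minimal path $\nu$ from $C_0$ to $(C_0)_G$ can simply be taken to be the initial segment of $\omega_i^{(k)}$, i.e.\ the piece of the straight-line path $s_i(\mathbb R)$ up to (but not including) the crossing of $H_F$. Every codimension-$1$ face this $\nu$ crosses lies on $L_i$ and hence is in $\Omega_i$. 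Moreover, since $\nu$ is positive minimal starting at $C_0$, each $\beta^\nu_{F_{j_h}}$ is already homotopic to the generator $\beta_{F_{j_h}}$ with $F_{j_h}\in\Omega_i$ (the initial segment of $\nu$ up to $F_{j_h}$ is a minimal positive path from $C_0$ to $(C_0)_{F_{j_h}}$), so there is no need to iterate Lemma~\ref{lm:salv12}: Lemma~\ref{lemma:12} combined with Lemma~\ref{lemma:step1} finishes the proof directly. With this choice of $\nu$, your argument becomes essentially the paper's.
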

\begin{proof}
Let $F\in \tf_1$, and let $H$ the affine hyperplane supporting $F$.

By construction, there is $i\in \{1,\ldots ,d\}$ and $k\in \mathbb Z$
such that $H$ is crossed by $\omega_i^{(k)}$ in, say, the face $G$
(`every hyperplane is cut by the coordinate axes').

By Lemma \ref{lemma:12}, $\beta_F$ is then product of $\beta_G$ and
other $\beta^\pm_{G'}$ with $G'\in\Omega$. These can be written in
terms of the $\Gamma_F$ by Lemma \ref{lemma:step1}.
\end{proof}

\subsection{Relations}
We now turn to the study of the relations.

\begin{lemma}\label{lm:Q1}
Let $F\in\call Q_1$. Then there is a sequence $F_1,\ldots,F_k$ of
elements of $\call Q_1$
such that $\beta_F$ is homotopic to
$$
(\prod_{i=1}^k\Gamma_{F_i})^{-1} \Gamma_{{F}}(\prod_{i=1}^k\Gamma_{F_i}).
$$

Moreover, $(F_1,\ldots,F_k)=\Sigma({\omega_{u_F}\, (u_FC_0\to
  (u_FC_0)_F)})$ as in Equation \ref{eq:sigma}. In particular, the $F_i$
are translates of elements of $\Omega\cap \otf$. 
\end{lemma}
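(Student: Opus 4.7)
\textbf{Proof plan for Lemma \ref{lm:Q1}.}

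The plan is to apply Lemma \ref{lm:salv12} (Salvetti's Lemma~12) directly to the positive path appearing in the definition of $\Gamma_F$, and then re-express the resulting conjugating loops in terms of the $\Gamma$'s.

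\emph{Step 1: Unwind $\Gamma_F$.} Setting $\nu := \omega_{u_F}\,(u_FC_0 \to (u_FC_0)_F)$ and using that $u_F$ commutes with everything by translation, one finds
\[
\Gamma_F \;=\; \omega_{u_F}\bigl(u_F.\beta_{\overline F}\bigr)\omega_{u_F}^{-1} \;=\; \nu\, l_F^{2}\, \nu^{-1},
\]
so $\Gamma_F$ is a $\nu$-conjugate of the double loop around $F$.

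\emph{Step 2: Factor $\nu$ via Lemma \ref{lm:salv12}.} Apply Lemma \ref{lm:salv12} to the positive path $\nu$, which runs from $C_0$ to $(u_FC_0)_F$. Writing $(F_1,\ldots,F_k) := \Sigma(\nu)$ and $\mu' := (C_0 \to (u_FC_0)_F)$ for the positive minimal path between these two chambers, we obtain
\[
\nu \;\simeq\; B\,\mu',\qquad B := \prod_{i=1}^{k}\beta_{F_i}^{\nu},
\]
and therefore
\[
\Gamma_F \;\simeq\; B\,\bigl[\mu'\, l_F^{2}\,(\mu')^{-1}\bigr]\, B^{-1}.
\]

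\emph{Step 3: Replace $\mu' l_F^{2}(\mu')^{-1}$ by $\beta_F$.} Set $\mu := (C_0 \to (C_0)_F)$, so $\beta_F = \mu\,l_F^{2}\,\mu^{-1}$. Since $F$ has codimension~$1$, there are only two chambers adjacent to it, so either $(u_FC_0)_F = (C_0)_F$ (in which case $\mu'\simeq\mu$ by uniqueness up to homotopy of positive minimal paths), or $(u_FC_0)_F = \mathrm{op}((C_0)_F,F)$, in which case $\mu'\simeq \mu\,l_F$ for the edge $l_F$ from $(C_0)_F$ to its opposite. In the second case
\[
\mu'\,l_F^{2}\,(\mu')^{-1} \;\simeq\; \mu\,l_F\,l_F^{2}\,l_F^{-1}\,\mu^{-1} \;=\; \mu\,l_F^{2}\,\mu^{-1} \;=\; \beta_F,
\]
so in both cases $\mu'\,l_F^{2}\,(\mu')^{-1}\simeq\beta_F$. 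This gives $\Gamma_F\simeq B\,\beta_F\,B^{-1}$, i.e., $\beta_F\simeq B^{-1}\Gamma_F B$.

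\emph{Step 4: Replace $B$ by $\prod_i \Gamma_{F_i}$.} For each $G = F_i \in \Sigma(\nu)$, the loop $\beta_G^\nu$ and $\Gamma_G$ both traverse $l_G^{2}$ but along different initial paths from $C_0$ to a chamber adjacent to $G$. One shows $\beta_G^\nu\simeq \Gamma_G$ by the very same device used in Step 3: the initial segment of $\nu$ up to $l_G$ ends at $(C_0)_G$ or $\mathrm{op}((C_0)_G,G)$, and $\omega_{u_G}(u_GC_0\to(u_GC_0)_G)$ ends at one of these two chambers as well; in either case the possible mismatch $l_G$ is absorbed by $l_G^{2}$, and the homotopy class of the resulting loop at $C_0$ depends only on $u_G$ together with the orbit $\Lambda\overline G$. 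Taking the product over $i$ yields $B\simeq\prod_{i=1}^{k}\Gamma_{F_i}$, which combined with Step 3 gives the desired formula.

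\emph{Step 5: Geometric bookkeeping.} By construction $\nu$ consists of segments $\omega_j^{(\pm)}$ and the final positive minimal piece $(u_FC_0\to(u_FC_0)_F)$, all of which traverse faces in $\call Q_1$; hence each $F_i\in\Sigma(\nu)\subset\call Q_1$. Each such $F_i$ is crossed by a translate of some $\omega_j^{(k')}$, so its orbit representative $\overline{F_i}$ belongs to $\Omega\cap\otf$.

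\textbf{Main obstacle.} The only substantive point is Step 4: the identification $\beta_G^\nu\simeq\Gamma_G$. The two initial paths from $C_0$ to a chamber adjacent to $G$ are not homotopic as paths in the 1-skeleton, but the discrepancy consists only of possible extra $\Lambda$-translations and an extra edge $l_G$ on the far side of $G$; both can be absorbed by the doubled loop $l_G^{2}$, but this requires the careful case analysis outlined above together with the elementary observation $l_G\, l_G^{2}\, l_G^{-1} = l_G^{2}$.
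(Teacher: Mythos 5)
Your Steps 1--3 and the case analysis about the terminal chamber are sound and match the paper's proof (which decomposes $\Gamma_F = \omega_{u_F}\,\mu\,l_F^2\,(\omega_{u_F}\mu)^{-1}$, applies Lemma~\ref{lm:salv12}, and concludes by the same two-case argument on whether $(C_0)_F = (u_FC_0)_F$).

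The problem is Step~4, which you correctly flag as the main obstacle --- but the argument you give there does not close it. You want $\beta_{G}^{\nu} \simeq \Gamma_G$ for $G = F_i \in \Sigma(\nu)$. Both loops have the form $p\,l_G^2\,p^{-1}$ for some path $p$ from $C_0$ to a chamber adjacent to $G$; the $l_G$-absorption device from Step~3 handles the possibility that the two conjugating paths end on opposite sides of $G$, but it does \emph{not} say anything about the paths themselves being homotopic. Your conclusion that ``the homotopy class of the resulting loop at $C_0$ depends only on $u_G$ together with the orbit $\Lambda\overline G$'' is not a valid general principle: conjugating $l_G^2$ by non-homotopic paths from $C_0$ to a chamber at $G$ produces, in general, distinct elements of $\pi_1$ (this is precisely the phenomenon that the relations $R_G$ keep track of). So the step as written assumes what it must prove.

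What actually makes the identification work --- and what the paper compresses into the phrase ``by construction'' --- is that $\omega_{u_F}$ is built as a concatenation of $\Lambda$-translates of the fixed paths $\omega_j$ along the segments $s_j$, and for such paths the initial segment of $\omega_{u_F}\mu$ preceding the crossing of $G$ is \emph{literally} $\omega_{u_G}\,(u_G C_0 \to (u_G C_0)_G)$ (up to the single absorbable $l_G$). Verifying this requires unwinding the definition of $\omega_u$ and of $\overline G$, $u_G$; it is not a consequence of any generic conjugacy-invariance. A related point is glossed in your Step~5: to conclude that the $F_i$'s are translates of $\Omega\cap\otf$, one must know that the surviving faces in $\Sigma(\nu)$ all lie in the $\omega_{u_F}$-portion rather than in the final minimal segment $\mu$; the paper derives this explicitly from the minimality of $\mu$, which your bookkeeping omits.
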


\begin{proof} 
By definition $\Gamma_F=\omega_{u_F}u_F.\beta_{\overline F}
\omega_{u_F}^{-1}$. Writing $\mu$ for a positive minimal path
$(u_FC_0\to (u_FC_0)_F)$ we decompose this into 
$$
\Gamma_F=\omega_{u_F} \mu (l_F)^2 (\omega_{u_F}\mu)^{-1}.
$$

With Remark \ref{rm:action}.(3) we have that $\omega_{u_F}\mu$ is a
positive path, and with Lemma \ref{lm:salv12} we write it as a
product $\prod_j \beta^{\omega_{u_F}\mu}_{G_j} (C_0\to (C_0)_F)$ where since $\mu$ is
positive miminal, the $G_j$ are crossed by $\omega_{u_F}$ and thus are translates of faces intersecting the
segments $s_i$. 

Now, by construction
$$ \beta^{\omega_{u_F}\mu}_{G_j} = \Gamma_{G_j}.$$
Then, set
$$
\Delta_F:=\prod_j \Gamma_{G_j}. 
$$

Therefore if $(C_0)_F=(u_FC_0)_F$ we are done with
$$ \Gamma_F \simeq\Delta_F \beta_F \Delta_F^{-1}, \textrm{ and thus }
\beta_F \simeq \Delta_F^{-1} \Gamma_F \Delta_F.$$

If $(C_0)_F\neq (u_FC_0)_F$, then we may choose a representant of
$(C_0\to (u_FC_0)_F)$ that ends with $l_F$, so its inverse begins
with $l_F^{-1}$ and we have the same relation as above.
\end{proof}

Keeping the notations of the Lemma we define, for every $F\in\call
Q_1$,
\begin{equation}\label{eq:delta}
\Delta_F:=\prod_{G\in \Sigma(\omega_{u_F}(u_FC_0\to (u_FC_0)_F))}\Gamma_{G};\quad\quad 
\Gamma^\Delta_F:={\Delta_F^{-1}}\Gamma_F^{\,}\Delta_F^{\,}
\end{equation}

Recall from \ref{subsec:gen}.II that to every face
$G\in\tf_2$ we have an ordered set $h(G)=(F_1,\ldots, F_k)$ of
incident codimension 1 faces, one for every hyperplane containing
$G$. The relations associated with $G$ assert the equality of
\begin{equation}\label{eq:rel1}
\beta_{F_{\sigma(1)}}\ldots \beta_{F_{\sigma(k)}}
\end{equation}
where $\sigma$ is a cyclic permutation, and we write $\beta_i$ for
$\beta_{F_i}$.

\begin{lemma}\label{lm:conjrel} Given $G\in \tf_2$ there is
  $\Delta_G$ such that, for all cyclic permutations $\sigma$, we have
  a homotopy of paths 
$$
\beta_{F_\sigma(1)}\ldots \beta_{F_\sigma(k)} \simeq 
\Delta_G\omega_{u_G}
u_G.(\Gamma^\Delta_{u_G^{-1}F_{\sigma(1)}}\ldots \Gamma^{\Delta}_{u_G^{-1}F_{\sigma(k)}})
\omega_{u_G}^{-1}\Delta_G^{-1}.
$$
\end{lemma}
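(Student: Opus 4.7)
The strategy is to apply equation \eqref{eq:alfabeta} twice — once at $G$ and once at its translate $\overline G := u_G^{-1}G \in \otf_2$ — and then reconcile the two expressions by means of Lemma \ref{lm:salv12}. First, \eqref{eq:alfabeta} gives, for any cyclic permutation $\sigma$,
\[
\beta_{F_{\sigma(1)}}\cdots\beta_{F_{\sigma(k)}} \simeq \gamma\,\alpha_G(\widetilde C)\,\gamma^{-1},
\]
with $\widetilde C = (C_0)_G$ and $\gamma$ a positive minimal path $C_0 \to \widetilde C$. Applying the same formula at $\overline G$, with $\overline C := (C_0)_{\overline G}$ and $\overline\gamma = (C_0 \to \overline C)$ positive minimal, and then translating by $u_G$ and conjugating by $\omega_{u_G}$, one obtains
\[
\omega_{u_G}\,u_G.\bigl(\beta_{\overline F_{\sigma(1)}}\cdots\beta_{\overline F_{\sigma(k)}}\bigr)\,\omega_{u_G}^{-1} \;\simeq\; \eta\,\alpha_G((u_GC_0)_G)\,\eta^{-1},
\]
where $\eta := \omega_{u_G}\cdot u_G.\overline\gamma$ is a positive path from $C_0$ to $(u_GC_0)_G$.

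The next step is to apply Lemma \ref{lm:salv12} to $\eta$, producing a factorization $\eta \simeq \bigl(\prod_{H\in\Sigma(\eta)}\beta_H^{\eta}\bigr)\cdot(C_0\to (u_GC_0)_G)$ in which the last factor is positive minimal. Each loop $\beta_H^\eta$ is supported along $\omega_{u_G}$ or along the translated segment $u_G.\overline\gamma$, and both of these cross only translates of elements of $\call Q_1$ (using Remark \ref{rm:action}(4) together with the definition of $\omega_{u_G}$), so each such loop is, by the construction of the $\Gamma$-generators, a translate of some $\Gamma_H$. Defining $\Delta_G$ to be the product of these $\Gamma$'s in the order prescribed by $\Sigma(\eta)$, and invoking Lemma \ref{lm:Q1} to rewrite each inner factor $\beta_{\overline F_{\sigma(i)}}$ as $\Gamma^\Delta_{\overline F_{\sigma(i)}}$ (legitimate because the facets of $\overline G \in \otf_2$ still lie in $\call Q_1$), the right-hand side of the statement is matched to $\Delta_G\,\eta\,\alpha_G((u_GC_0)_G)\,\eta^{-1}\Delta_G^{-1}$, modulo a possible mismatch between $\gamma$ and the minimal path $(C_0\to(u_GC_0)_G)$.

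The main obstacle is closing this last mismatch: the chambers $\widetilde C = (C_0)_G$ and $(u_GC_0)_G$ both contain $G$ in their boundary but lie in distinct fibers of $\pi_G$ in general, so $\alpha_G(\widetilde C)$ and $\alpha_G((u_GC_0)_G)$ correspond to different starting points in the cyclic ordering of facets of $G$. Since however a change of starting chamber only cyclically permutes the list $h(G)$, and since the statement is asserted for \emph{all} cyclic $\sigma$, it suffices to verify the identity for the distinguished $\sigma$ matching the starting chamber $(u_GC_0)_G$; the other cyclic shifts on each side then become equivalent to it via the Salvetti relations $R_G$ and $R_{\overline G}$ (the latter transported by $u_G$) already in force. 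After this matching, both sides coincide and the lemma follows.
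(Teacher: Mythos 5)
Your overall strategy mirrors the paper's — apply equation \eqref{eq:alfabeta} at both $G$ and $\overline G=u_G^{-1}G$, then reconcile — but the execution contains two genuine gaps.

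First, you invoke Lemma \ref{lm:salv12} to factor $\eta=\omega_{u_G}\cdot u_G.\overline\gamma$, but that lemma applies only to \emph{positive} paths, and $\omega_{u_G}$ is positive only when $u_G$ has nonnegative coordinates (see the paper's own remark after the definition of $\omega_u$). For a general $G\in\tf_2$ this need not hold — contrast with Lemma \ref{lm:Q1}, which is restricted to $\call Q_1$ precisely so that Remark \ref{rm:action}.(3) guarantees positivity. So the factorization $\eta\simeq(\prod\beta^\eta_H)(C_0\to(u_GC_0)_G)$ is not available in general, and your $\Delta_G$ is not defined by the stated procedure.

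Second, and more fundamentally, your closing ``mismatch'' argument does not close the gap. By \eqref{eq:alfabeta} (applied at $G$) and by \eqref{eq:alfabeta} at $\overline G$ together with Lemma \ref{lm:Q1}, \emph{both} sides of the lemma are already independent of the cyclic permutation $\sigma$; picking a ``distinguished $\sigma$'' is therefore vacuous and cannot reconcile $\gamma\,\alpha_G(\widetilde C)\,\gamma^{-1}$ with $\Delta_G\,\eta\,\alpha_G((u_GC_0)_G)\,\eta^{-1}\Delta_G^{-1}$. The actual discrepancy is a difference of \emph{conjugating paths} (from $C_0$ to $\widetilde C=(C_0)_G$ versus from $C_0$ to $(u_GC_0)_G$) and of \emph{base chambers} for $\alpha_G$, not a cyclic reordering of the $\beta$-word, and it must be absorbed by a correct choice of $\Delta_G$. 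The paper does exactly this: writing $C'=(C_0)_G$, $C''=(u_GC_0)_G$, $\mu=(u_GC_0\to C'')$ and $\nu=(C''\to C')$, it uses the base-change relation $\alpha_G(C')\simeq\nu^{-1}\alpha_G(C'')\nu$ and then sets $\Delta_G:=(C_0\to C')\,\nu^{-1}\mu^{-1}\omega_{u_G}^{-1}$, an explicit loop at $C_0$ (not a priori a $\Gamma$-product, and no positivity is required). With this $\Delta_G$ the conjugating paths cancel exactly, yielding \eqref{eq:expanded}, and Lemma \ref{lm:Q1} then converts the inner $\beta_{\overline F_{\sigma(i)}}$ into $\Gamma^\Delta_{\overline F_{\sigma(i)}}$. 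Your proposal, by insisting on a $\Gamma$-product $\Delta_G$ coming from a factorization of $\eta$, produces a conjugator that does not match the one needed, and the final sentence ``both sides coincide'' is unsubstantiated.
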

\def\oug{\omega_{u_G}}
\begin{proof} Let us fix some notation and let
$C':=(C_0)_G$, $C'':=(u_G.C_0)_G $, $\mu:=(u_GC_0\to C'')$,
$\nu:=(C''\to C')$.
By equation (\ref{eq:alfabeta}) we have the homotopy
$$
 \beta_{\sigma(1)}\ldots \beta_{\sigma(k)} \simeq  (C_0\to C')\alpha_G(C')(C_0\to C')^{-1} 
$$
moreover, with Equation (\ref{eq:alfa}) we see
$$
\alpha_G(C') \simeq \nu^{-1}\alpha_G(C'') \nu
\simeq
\nu^{-1}
\mu^{-1} \oug^{-1}\oug \mu
\alpha_G(C'') 
\mu^{-1}\oug^{-1}\oug \mu \nu
$$

expanding $\mu
\alpha_G(C'') 
\mu^{-1}$ according to Equation (\ref{eq:alfabeta}) and defining
$\Delta_G:= (C_0\to C')\nu^{-1}\mu^{-1}\oug^{-1}$ we have the homotopy

\begin{equation}\label{eq:expanded}
 \beta_{\sigma(1)}\ldots \beta_{\sigma(k)}  \simeq
\Delta_G\oug
(u_G.\beta_{u_G^{-1}F_{\sigma(1)}})\ldots (u_G.\beta_{u_G^{-1}F_{\sigma(k)}})
\oug^{-1}\Delta_G^{-1}
\end{equation}

From which the claim follows by use of Lemma \ref{lm:Q1}.
\end{proof}

\def\rel{R^{\downharpoonright}}
\begin{defi}\label{df:genreltorico} For $F\in \tf_1$ let 
$$
\gamma_F:=p(\Gamma_F).
$$
Moreover, for $F\in \call Q_1$ let
$$ \delta_F:=p(\Delta_F); \quad\quad \gamma_F^\delta:=\delta_F^{-1}\gamma_F\delta_F
$$

Given $G\in \tf_2$ with $h(G)=(F_1,\ldots,F_k)$, 
let $\rel_G$ define the relation stating the
equality of all words
$$
\gamma_{F_{\sigma(1)}}^\delta \cdots \gamma_{F_{\sigma(k)}}^\delta
$$
where $\sigma$ ranges over all cyclic permutations.
\end{defi}

\begin{lemma}\label{lm:relconj}
If $G\in \tf_2$ is a face of codimension $2$, then $\rel_G$ is equivalent to
$\rel_{\overline G}$
\end{lemma}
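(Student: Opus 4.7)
The plan is to derive both $\rel_G$ and $\rel_{\bar G}$ as images under $p_*$ of the affine Salvetti relations $R_G$ and $R_{\bar G}$, and to show that the two images differ in $\fg(\compl)$ only by conjugation by a word that is independent of the cyclic permutation $\sigma$, hence are equivalent as relations.

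First I would specialise Lemma \ref{lm:conjrel} to the case $G = \bar G$. Since $u_{\bar G} = e$, the path $\omega_{u_{\bar G}}$ is trivial and, tracing through the proof of Lemma \ref{lm:conjrel}, the auxiliary chambers $C'$ and $C''$ coincide, $\mu = (C_0\to C')$ and $\nu$ is trivial, so that $\Delta_{\bar G}$ collapses to the identity. Lemma \ref{lm:conjrel} then reduces to
$$\beta_{\bar F_{\sigma(1)}}\cdots \beta_{\bar F_{\sigma(k)}} \simeq \Gamma^\Delta_{\bar F_{\sigma(1)}}\cdots \Gamma^\Delta_{\bar F_{\sigma(k)}},$$
and applying $p_*$ together with Definition \ref{df:genreltorico} shows that the projection of $R_{\bar G}$ is precisely $\rel_{\bar G}$.

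Next I would apply Lemma \ref{lm:conjrel} to $G$ itself; writing $\bar F_i := u_G^{-1}F_i$ (so that $h(\bar G) = (\bar F_1,\dots,\bar F_k)$) one obtains
$$\beta_{F_{\sigma(1)}}\cdots \beta_{F_{\sigma(k)}} \simeq \Delta_G\,\omega_{u_G}\bigl(u_G.(\Gamma^\Delta_{\bar F_{\sigma(1)}}\cdots \Gamma^\Delta_{\bar F_{\sigma(k)}})\bigr)\omega_{u_G}^{-1}\Delta_G^{-1}.$$
Applying $p_*$ and using that $p \circ u_G = p$, so that the $\Lambda$-translation disappears after projection, this becomes
$$p_*(\beta_{F_{\sigma(1)}}\cdots \beta_{F_{\sigma(k)}}) = \delta_G\,\tau_{u_G}\bigl(\gamma^\delta_{\bar F_{\sigma(1)}}\cdots \gamma^\delta_{\bar F_{\sigma(k)}}\bigr)\tau_{u_G}^{-1}\delta_G^{-1}.$$
By Lemma \ref{lm:Q1} the left-hand side equals $\gamma^\delta_{F_{\sigma(1)}}\cdots \gamma^\delta_{F_{\sigma(k)}}$, so $\rel_G$ amounts to asserting the equality, over all cyclic permutations $\sigma$, of these conjugate words.

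Since the outer conjugator $\delta_G\tau_{u_G}$ does not depend on $\sigma$, equality of all cyclic permutations of the conjugates is equivalent to equality of all cyclic permutations of $\gamma^\delta_{\bar F_{\sigma(1)}}\cdots \gamma^\delta_{\bar F_{\sigma(k)}}$, which is precisely $\rel_{\bar G}$, yielding the desired equivalence. The main obstacle I anticipate is the verification that $\Delta_{\bar G}$ really collapses to the identity when $u_{\bar G}=e$, which requires revisiting the proof of Lemma \ref{lm:conjrel} to rule out hidden basepoint subtleties, together with the justification that $p_*$ absorbs the $u_G$-translation; the latter is an appeal to the $\Lambda$-equivariance of the embedding $\tilde{\call S}\to M(\tilde{\scr A})$ from Section \ref{covering_spaces}.
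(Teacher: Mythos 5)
Your proposal follows essentially the same strategy as the paper: express both sides via Lemma~\ref{lm:conjrel}, project under $p_*$ using $\Lambda$-equivariance to absorb the $u_G$-translation, and conclude by observing that the residual conjugator $p_*(\Delta_G)\,p_*(\omega_{u_G})$ does not depend on the cyclic permutation $\sigma$. The preliminary verification that $\Delta_{\overline G}$ trivialises when $u_{\overline G}=e$ is correct but unnecessary: the identity $p_*(\beta_{\overline F_{\sigma(1)}}\cdots\beta_{\overline F_{\sigma(k)}}) = \gamma^\delta_{\overline F_{\sigma(1)}}\cdots\gamma^\delta_{\overline F_{\sigma(k)}}$ already follows from Lemma~\ref{lm:Q1} term-by-term, since each $\overline F_{\sigma(i)}\in\call Q_1$, without re-running Lemma~\ref{lm:conjrel} at $\overline G$. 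The one place deserving a second look is the appeal to Lemma~\ref{lm:Q1} near the end, where you identify $p_*(\beta_{F_{\sigma(1)}}\cdots\beta_{F_{\sigma(k)}})$ with $\gamma^\delta_{F_{\sigma(1)}}\cdots\gamma^\delta_{F_{\sigma(k)}}$: Lemma~\ref{lm:Q1} is stated only for faces in $\call Q_1$, while for arbitrary $G\in\tf_2$ the incident faces $F_{\sigma(i)}$ need not lie in $\call Q_1$. This is a loose end already implicit in Definition~\ref{df:genreltorico} (which presupposes $\gamma^\delta_{F_i}$ is meaningful for the $F_i$ incident to any $G\in\tf_2$), so you are not introducing a new gap, but your phrasing makes the dependence on that tacit extension of Lemma~\ref{lm:Q1} more visible than the paper's own derivation, which instead keeps the $\Gamma^\Delta$ arguments on the $\overline F_{\sigma(i)}\in\call Q_1$ side throughout.
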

\begin{proof}
Let $G\in \tf_2$. With Lemma \ref{lm:conjrel} (and the notation thereof) we know that every
relation $\rel_G$ 
states the equality of all
$$
p_*(\Delta_G) p_*(\Gamma^\Delta_{F_{\sigma(1)}}\ldots \Gamma^{\Delta}_{F_{\sigma(k)}}) p_*(\Delta_G)^{-1},
$$
where $\sigma$ runs over all cyclic
permutations. The middle term by Equation
(\ref{eq:expanded}) is represented by the path
$$
\oug
(u_G.\beta_{u_G^{-1}F_{\sigma(1)}})\ldots (u_G.\beta_{u_G^{-1}F_{\sigma(k)}})
\oug^{-1}
$$

and thus its image under $p_*$ is represented by the same path as
$$
p_*(\omega_{u_G}) 
p_*(\beta_{u_G^{-1}F_{\sigma(1)}}\ldots \beta_{u_G^{-1}F_{\sigma(k)}})
p_*(\omega_{u_G})^{-1}
$$

Where $u_G^{-1}F_{\sigma(i)}\in\call Q_1$ for all $i$.
Now we apply Lemma \ref{lm:Q1}. The element
$\mu:=p_*(\omega_{u_G})\in\fg(T_\Lambda)$ is such that, for every
cyclic permutation $\sigma$,
$$
p_*(\Gamma^\Delta_{F_{\sigma(1)}}\ldots
\Gamma^{\Delta}_{F_{\sigma(k)}})
=
\mu \,
p_*(\Gamma^\Delta_{\overline F_{\sigma(1)}}\ldots
\Gamma^{\Delta}_{\overline F_{\sigma(k)}})
\mu^{-1}
$$
and therefore relation $\rel_G$ is equivalent to relation $\rel_{\overline
  G}$.
\end{proof}

\subsection{Presentation}

In this closing section we discuss presentations for $\fg(M(\scr A))$.

\begin{lemma}
For all $F\in \call Q_1$ let $(F_1,\ldots
F_k)=\Sigma(\omega_{u_F}(u_FC_0\to (u_FC_0)_F))$. We have 
$$
\delta_F = \prod_{i=1}^k \tau_{u_{F_i}}\gamma_{\overline F_i} \tau_{u_{F_i}}^{-1}
$$
and, in particular, $\gamma_F^\delta$ can be written as a word in the
$\tau_1,\ldots ,\tau_n$ and $\gamma_F$ with $F\in \otf_1$.
\end{lemma}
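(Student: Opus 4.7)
The plan is to simply unwind the definitions and apply the homomorphism $p_*$ to the formula for $\Delta_F$ established in Lemma \ref{lm:Q1}, then use the defining properties of $\tau_u$ and the normality of the action of $\Lambda$ on $\tilde{\call S}$.

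First, from the definition of $\Delta_F$ in equation \eqref{eq:delta} and the fact that $(F_1,\ldots,F_k)=\Sigma(\omega_{u_F}(u_FC_0\to (u_FC_0)_F))$, we have
\[
\Delta_F \;=\; \prod_{i=1}^k \Gamma_{F_i}.
\]
Since $p_*$ is a group homomorphism and $\Gamma_{F_i}=\omega_{u_{F_i}}(u_{F_i}.\beta_{\overline{F_i}})\omega_{u_{F_i}}^{-1}$ by definition of $\Gamma$, we obtain
\[
\delta_F \;=\; p_*(\Delta_F) \;=\; \prod_{i=1}^k p_*\bigl(\omega_{u_{F_i}}\bigr)\, p_*\bigl(u_{F_i}.\beta_{\overline{F_i}}\bigr)\, p_*\bigl(\omega_{u_{F_i}}\bigr)^{-1}.
\]
By definition $p_*(\omega_{u_{F_i}})=\tau_{u_{F_i}}$, and since $p$ identifies the $\Lambda$-translates of any loop, $p_*(u_{F_i}.\beta_{\overline{F_i}})=p_*(\beta_{\overline{F_i}})=\gamma_{\overline{F_i}}$. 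This gives the first claimed formula.

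For the second claim, note that each $\tau_{u_{F_i}}$ is a word in $\tau_1,\ldots,\tau_n$: indeed, by the very definition of $\omega_u$, writing $u_{F_i}=u_1^{q_1}\cdots u_n^{q_n}$ and using that the $\tau_j$ commute by Lemma \ref{lm:tau}, one has $\tau_{u_{F_i}}=\tau_1^{q_1}\cdots\tau_n^{q_n}$. Thus $\delta_F$ is already a word in $\tau_1,\ldots,\tau_n$ and the $\gamma_{\overline{F_i}}$ with $\overline{F_i}\in\otf_1$. Applying the same reasoning to $\gamma_F$ itself, namely $\gamma_F=p_*(\Gamma_F)=\tau_{u_F}\gamma_{\overline F}\tau_{u_F}^{-1}$, we see that
\[
\gamma_F^\delta \;=\; \delta_F^{-1}\,\tau_{u_F}\gamma_{\overline F}\tau_{u_F}^{-1}\,\delta_F
\]
is a word in the $\tau_j$'s and in the generators $\gamma_{\overline G}$ with $\overline G\in\otf_1$, as required.

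The argument is essentially book-keeping; the only point requiring care is ensuring that $p_*(u_{F_i}.\beta_{\overline{F_i}})=p_*(\beta_{\overline{F_i}})$, i.e.\ that translation by $\Lambda$ inside $M(\tilde{\scr A})$ descends to a loop homotopic to the untranslated one under $p$. This is a direct consequence of the fact that $\Lambda$ is the deck transformation group of the covering $p$, so no serious obstacle arises.
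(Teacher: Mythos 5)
Your proof is correct and follows essentially the same route as the paper, which simply says the formula ``is an easy computation using Remark \ref{rm:action}.(1)''; you unwind that remark explicitly by applying $p$ to the defining conjugation $\Gamma_{F_i}=\omega_{u_{F_i}}(u_{F_i}.\beta_{\overline{F_i}})\omega_{u_{F_i}}^{-1}$ and using $\Lambda$-invariance of $p$ together with $\Gamma_{\overline{F_i}}=\beta_{\overline{F_i}}$ (Remark \ref{rm:action}.(2)). The additional observation that $\tau_{u}=\tau_1^{q_1}\cdots\tau_n^{q_n}$ via Lemma \ref{lm:tau} is the right justification for the ``in particular'' clause.
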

\begin{proof} This is an easy computation using Remark \ref{rm:action}.(1).
\end{proof}

In Particular, the relations $\rel$ can be written in terms of the
$\tau_i$ and the $\gamma_F$ with $F\in \otf_1$. We have immediately

\begin{thmA}\label{teo:mainfg}
The group $\fg(\call M (\scr A))$ is presented as
$$
\langle \tau_1,\ldots, \tau_n; \; \gamma_F,\, F\in \call F_1 \mid
\tau_i\tau_j=\tau_j\tau_i \textrm{ for }i,j=1,\ldots , n;\; \rel_G,\, G\in \call F_2\rangle,
$$
where we identify $\call F_1$ with $\otf$ and $\call F_2$ with $\otf_2$.
\end{thmA}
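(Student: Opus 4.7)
The plan is to assemble the presentation from the semidirect product decomposition of Lemma \ref{lm:sequence}, namely
$$\fg(M(\scr A)) \simeq \fg(\tilde{\call S}) \rtimes \fg(T_\Lambda),$$
by combining a presentation of each factor and recording the conjugation action. Since $\fg(T_\Lambda) \cong \mat Z^n$ is free abelian with generators $\tau_1,\ldots,\tau_n$ (Lemma \ref{lm:tau}), the quotient contributes only the commutation relations $\tau_i\tau_j=\tau_j\tau_i$. The entire work is therefore in describing the normal subgroup $\fg(\tilde{\call S})$ as presented, equivariantly, inside $\fg(M(\scr A))$.

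First I would describe $p_*(\fg(\tilde{\call S}))$. Salvetti's theorem (recalled in \ref{subsec:gen}.III) gives
$$\fg(\tilde{\call S}) = \langle \beta_F,\, F\in\tf_1 \mid R_G,\, G\in \tf_2\rangle,$$
and Lemma \ref{lm:fundgen} guarantees that the $\Gamma_F$ already generate $\fg(M(\tilde{\scr A}))$. Taking images under $p_*$, the family $\{\gamma_F : F\in\tf_1\}$ generates the normal subgroup. Because $\Gamma_F = \omega_{u_F}(u_F.\beta_{\overline F})\omega_{u_F}^{-1}$, applying $p_*$ yields $\gamma_F = \tau_{u_F}\,\gamma_{\overline F}\,\tau_{u_F}^{-1}$, so in the presence of the $\tau_i$ the generating set reduces to $\{\gamma_F : F\in \otf_1\}$, consistent with the identification $\call F_1 \equiv \otf_1$ in the statement. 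This step also embodies the $\Lambda$-action on $\fg(\tilde{\call S})$: Remark \ref{rm:action}.(1) becomes the semidirect-product rule $\tau_u\,\gamma_F\,\tau_u^{-1} = \gamma_{uF}$, which is automatically built into the presentation once we allow arbitrary $F\in\tf_1$ as generators and then eliminate all but orbit representatives.

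Next I would translate the relations. Each Salvetti relation $R_G$ for $G\in\tf_2$ pushes forward to a relation $\rel_G$ among the $\gamma_F^\delta$, and Lemma \ref{lm:relconj} shows $\rel_G$ is conjugate to (hence equivalent to) $\rel_{\overline G}$ modulo the $\tau_i$. So one needs only the relations $\rel_G$ for $G \in \otf_2 \equiv \call F_2$. The preceding lemma further rewrites $\gamma_F^\delta$ as a word in $\tau_1,\ldots,\tau_n$ and $\{\gamma_F : F\in\otf_1\}$, so the relations $\rel_G$ live in the presented group. By the semidirect product structure, no further relations are needed beyond the commutation of the $\tau_i$, the $\Lambda$-conjugation rule (absorbed into the choice of representatives), and the Salvetti relations $\rel_G$.

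The main obstacle I expect is not in producing generators and relations --- these are essentially handed to us by the previous lemmas --- but in the bookkeeping argument that what we wrote down is \emph{a complete} set of relations. Concretely, one must verify the universal property: given any group $H$ receiving $\tau_i \mapsto t_i$ and $\gamma_F \mapsto g_F$ satisfying the displayed relations, the induced map from the free product modulo these relations factors through $\fg(M(\scr A))$. The cleanest way is to exhibit it as built from the two presentations (of $\fg(\tilde{\call S})$ and $\fg(T_\Lambda)$) glued via the semidirect product, using the splitting $\xi$ of Lemma \ref{lm:section} to realize the $\tau_i$ coherently. Once this is done, the statement of Theorem \ref{teo:mainfg} follows by comparing the resulting presentation to the one written, after replacing every $\gamma_F^\delta$ by its expression in the preceding lemma and quotienting the $\rel_G$ by the equivalence of Lemma \ref{lm:relconj}.
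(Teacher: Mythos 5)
Your proposal takes essentially the same route as the paper. The paper's "proof" consists only of the phrase "We have immediately," since the entire content is already distributed over the preceding lemmas: Lemma \ref{lm:sequence} provides the split exact sequence and hence the semidirect product $\fg(M(\scr A)) \simeq \fg(\tilde{\call S}) \rtimes \fg(T_\Lambda)$; Salvetti's presentation supplies the normal factor; Lemma \ref{lm:fundgen} and Remark \ref{rm:action}.(1) let one cut the generating set down to $\{\gamma_F : F\in\otf_1\}$; and Lemmas \ref{lm:Q1}, \ref{lm:conjrel}, \ref{lm:relconj} and the last unnamed lemma translate the Salvetti relations $R_G$ into the finitely many $\rel_G$ with $G$ ranging over $\otf_2$, written in the chosen generators. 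You assemble exactly these ingredients. The one small imprecision is the paragraph about the "universal property": the argument you actually want is the standard one of exhibiting a surjection from the abstractly presented group onto $\fg(M(\scr A))$ and then using the semidirect product structure (restricting to the kernel of the projection onto $\fg(T_\Lambda)$ and comparing with Salvetti's presentation of the normal factor) to conclude injectivity, rather than directly verifying a universal property of $\fg(M(\scr A))$. But the overall strategy is the intended one, and you explicitly flag the completeness-of-relations step, which the paper silently absorbs into "immediately."
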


This presentation, while not very economical in terms of generators,
has the advantage that the relations can be described with an
acceptable amount of complexity. 

Using Lemma \ref{lm:fundgen} and Remark \ref{rm:action}.(1) we can
let, for all $G\in \otf_2$, $\widetilde{R}_G^\downharpoonright$ denote the relations
obtained from $\rel_G$ by substituting every $\gamma_F$ with the
corresponding expression in terms of the generators $\tau_1,\ldots ,
\tau_d$ and $\gamma_{F'}$ with $F'\in\otf\cap \Omega$. Under the
identification of $\call F_1$ with $\otf$, these are the faces on the
compact torus that are crossed
by some fixed chosen reppresentants of the generators $\tau_1,\ldots ,\tau_d$.

\begin{thmA}
The group $\fg(\call M (\scr A))$ is presented as
$$
\langle \tau_1,\ldots, \tau_n; \; \gamma_F,\, F\in p(\Omega) \cap
\call F_1 \mid
\tau_i\tau_j=\tau_j\tau_i \textrm{ for }i,j=1,\ldots , n;\; \widetilde{R}^\downharpoonright_G,\, G\in \call F_2\rangle.
$$
\end{thmA}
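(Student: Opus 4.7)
The plan is to derive this presentation from the previous Theorem \ref{teo:mainfg} by a sequence of Tietze transformations, using Lemma \ref{lm:fundgen} as the main geometric input together with Remark \ref{rm:action}.(1) as the algebraic bookkeeping tool. The intuition is straightforward: upstairs in $\fg(M(\tilde{\scr A}))$ the loops $\Gamma_F$ for $F\in\Omega$ already generate, so downstairs the $\gamma_F$ for $F\in p(\Omega)\cap\call F_1$ should suffice, provided we are allowed to conjugate by the $\tau_i$.

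First I would make the reduction explicit. For every $F\in\otf_1$, Lemma \ref{lm:fundgen} (whose proof uses Lemmas \ref{lemma:12} and \ref{lemma:step1}) expresses $\Gamma_F$ as a word in $\{\Gamma_{F'}\mid F'\in\Omega\}$. Pushing this equality down via $p_*$ and applying Remark \ref{rm:action}.(1) to rewrite $p_*(\Gamma_{uF'})=\tau_u\gamma_{\overline{F'}}\tau_u^{-1}$ for any $F'\in\Omega$, we obtain an explicit word $w_F$ in the alphabet $\{\tau_1^{\pm 1},\dots,\tau_n^{\pm 1}\}\cup\{\gamma_{F'}^{\pm 1}\mid F'\in p(\Omega)\cap\call F_1\}$ such that $\gamma_F=w_F$ holds in $\fg(M(\scr A))$. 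Since Theorem \ref{teo:mainfg} presents this group with generators $\{\tau_i\}\cup\{\gamma_F\mid F\in\otf_1\}$, each identity $\gamma_F=w_F$ is a consequence of the listed relations.

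Next, I would carry out the Tietze transformations. Starting from the presentation of Theorem \ref{teo:mainfg}, for every $F\in\otf_1\setminus p(\Omega)$ I add the relation $\gamma_F=w_F$ (legal, since it already holds), then eliminate the generator $\gamma_F$ by back-substitution. What remains is the generating set $\{\tau_1,\dots,\tau_n\}\cup\{\gamma_F\mid F\in p(\Omega)\cap\call F_1\}$, together with the commutation relations $\tau_i\tau_j=\tau_j\tau_i$ (unaffected by the substitution) and, for each $G\in\otf_2$, the relation $\rel_G$ with every occurrence of a $\gamma_F$ outside $p(\Omega)\cap\call F_1$ replaced by the corresponding $w_F$. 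This replaced relation is by definition $\widetilde{R}^{\downharpoonright}_G$.

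The main obstacle, and the only thing that needs real care, is bookkeeping: verifying that the substitutions $\gamma_F\mapsto w_F$ inside the (already conjugated) expressions $\gamma_F^\delta$ appearing in $\rel_G$ produce well-defined words in the retained generators, and that no further relation is needed beyond the commutativity of the $\tau_i$. This is exactly what Remark \ref{rm:action}.(1) guarantees, because every conjugation by a $\Lambda$-translate appearing in $\Delta_F$ or in the definition of $\gamma_F^\delta$ descends, under $p_*$, to a conjugation by a word in the $\tau_i$. Once these substitutions are performed cleanly, the resulting presentation is precisely the one claimed.
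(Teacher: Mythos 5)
Your proof is correct and takes essentially the same route as the paper: invoke Lemma \ref{lm:fundgen} and Remark \ref{rm:action}.(1) to express each $\gamma_F$ with $F\notin p(\Omega)\cap\call F_1$ as a word $w_F$ in the retained generators, then eliminate those generators. You merely make explicit the Tietze transformations that the paper leaves implicit when it states the theorem directly after defining $\widetilde{R}^\downharpoonright_G$.
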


\begin{oss}
The number of generators (and relations) can in principle be reduced
further, by adequate choice of the coordinates of $T_\Lambda$. The
computations, however, become quite more involved and
untransparent. We thus omit them here, leaving the question open for a presentation with generators and relations
corresponding to layers instead of faces (which exists in the case of
complexified hyperplane arrangements, as shown by Salvetti in
\cite{salvetti1987tcr} by simplifying the presentation given above in \ref{subsec:gen}.3).
\end{oss}

\ifthenelse{\boolean{appendice}}{
\appendix
\section{Covering maps and quotients}
We will use here the notations of \cite{bridson1999metric}.

\begin{prop}\label{prop:uniquelift}
  Let $p:\call C \to \call D$ be a covering of acyclic categories,
  $\gamma = (e_1, \dots, e_k)$ an edge path in $\call D$ joining
  $d$ and $d'$, $c \in p^{-1}(d)$ a base point.
  Then there is a unique \emph{lift} $\delta = (f_1, \dots, f_k)$,
  such that $p(\delta) = \gamma$ and $i(f_1) = c$.
\end{prop}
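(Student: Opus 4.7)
My plan is to prove this by induction on the length $k$ of the edge path $\gamma$, reducing the general case to the defining single-edge lifting property of a covering of acyclic categories.

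For the base case $k=1$, the path consists of a single morphism $e_1:d\to d'$. By the definition of a covering of acyclic categories (cf.\ \cite[Definition A.15]{bridson1999metric}), the restriction of $p$ to the set of morphisms with a fixed source is a bijection onto the set of morphisms with source $p(c)=d$. Consequently, given the base point $c\in p^{-1}(d)$, there exists a unique morphism $f_1\in\mathcal M(\call C)$ with $i(f_1)=c$ and $p(f_1)=e_1$. This establishes both existence and uniqueness for $k=1$.

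For the inductive step, assume the statement holds for edge paths of length $k-1$. Given $\gamma=(e_1,\dots,e_k)$ of length $k$, the truncation $\gamma'=(e_1,\dots,e_{k-1})$ is an edge path of length $k-1$ from $d$ to $i(e_k)=t(e_{k-1})$, so by the induction hypothesis it admits a unique lift $\delta'=(f_1,\dots,f_{k-1})$ with $i(f_1)=c$. Let $c_{k-1}:=t(f_{k-1})$; since $p$ is a functor, $p(c_{k-1})=t(e_{k-1})=i(e_k)$. Now apply the base case to the edge $e_k$ with base point $c_{k-1}$: there exists a unique $f_k$ with $i(f_k)=c_{k-1}$ and $p(f_k)=e_k$. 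The concatenation $\delta=(f_1,\dots,f_{k-1},f_k)$ is then a lift of $\gamma$ starting at $c$; composability is automatic because $t(f_{k-1})=i(f_k)$ by construction.

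Uniqueness of the whole lift follows from the inductive hypothesis applied to the first $k-1$ edges (which forces $f_1,\dots,f_{k-1}$, and in particular the intermediate vertex $c_{k-1}$) together with the base case applied at $c_{k-1}$ (which forces $f_k$). I do not anticipate a real obstacle here: the only subtlety is being careful with direction conventions in the acyclic category so that ``starting at $c$'' unambiguously means $i(f_1)=c$, and invoking the correct form of the covering property (bijection on stars), which is built into the definition of a category covering in the sense used in \cite{bridson1999metric}.
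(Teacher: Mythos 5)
The paper does not supply a proof of this proposition: it appears in the (non\nobreakdash-compiled) appendix as a background fact, stated without argument and used to justify the next statement about the automorphism group acting freely. So there is no ``paper's own proof'' to compare against; your task was genuinely to fill a gap.

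Your induction on the length of $\gamma$ is the natural and correct argument. One caveat: in the Bridson--Haefliger formalism for scwols, an \emph{edge path} is generally allowed to traverse edges against their orientation (each $e_j$ carries a sign $\pm 1$, and the initial vertex of $e^{-1}$ is $t(e)$). Correspondingly, the covering condition in \cite[Def.~A.15]{bridson1999metric} asserts two bijections at each vertex $c$: one from $\{f : i(f)=c\}$ onto $\{e : i(e)=p(c)\}$ and one from $\{f : t(f)=c\}$ onto $\{e : t(e)=p(c)\}$. Your base case only invokes the source bijection; for a backward-traversed edge ($\epsilon_j=-1$) you must instead invoke the target bijection at the relevant vertex. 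You gesture at this with the phrase ``bijection on stars,'' so you clearly know the right ingredient, but the write-up should handle the sign $\epsilon_j$ explicitly in both the base case and the inductive step. With that amendment the proof is complete.
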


\begin{defi}
  Let $p:\call C \to \call D$ be a covering of acyclic categories,
  the \emph{automorphism group} of $p$ is the group of 
  automorphisms $\varphi$ of $\call C$, such that $p\circ\varphi = p$.
\end{defi}

The automorphism group of a covering acts of course on the covering space
$\call C$. The following fact follows from proposition \ref{prop:uniquelift}
\begin{prop}
  Let $p:\call C \to \call D$ be a covering of acyclic categories,
  then the action of $\mbox{Aut}(p)$ on $\call C$ is free.
\end{prop}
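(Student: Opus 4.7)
The plan is to reduce freeness of the action to the unique path-lifting statement of Proposition \ref{prop:uniquelift}. Recall that freeness means no non-identity $\varphi \in \mbox{Aut}(p)$ admits a fixed point; since fixing a morphism of $\call C$ automatically fixes its source, it will suffice to show that any $\varphi \in \mbox{Aut}(p)$ with $\varphi(c_0) = c_0$ for some $c_0 \in \call O(\call C)$ must in fact be the identity functor $\id_{\call C}$.

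Given such $\varphi$ and $c_0$, I would first show that $\varphi$ fixes every object reachable from $c_0$. Pick an arbitrary $c \in \call O(\call C)$ together with an edge path $\delta = (f_1, \ldots, f_k)$ in $\call C$ from $c_0$ to $c$. Since $\varphi$ is a functor and $p \circ \varphi = p$, the sequence $\varphi(\delta) := (\varphi(f_1), \ldots, \varphi(f_k))$ is again an edge path, starting at $\varphi(c_0) = c_0$ and satisfying $p(\varphi(\delta)) = p(\delta)$. Proposition \ref{prop:uniquelift} then forces $\delta = \varphi(\delta)$ edge by edge, so $\varphi(f_i) = f_i$ for each $i$ and in particular $\varphi(c) = c$.

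Having shown that $\varphi$ fixes every object, the next step is to conclude that $\varphi$ also fixes every morphism: for any $g \in \call M(\call C)$, $\varphi(g)$ and $g$ share source, target, and image under $p$; since a covering functor is by definition a bijection on the star (the set of morphisms out of each object), the equality $p(\varphi(g)) = p(g)$ combined with the equality of sources forces $\varphi(g) = g$. Hence $\varphi = \id_{\call C}$, and the action is free.

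The main subtlety, rather than a genuine obstacle, is the implicit connectedness hypothesis on $\call C$ needed so that an edge path from $c_0$ reaches any chosen $c$. Under the standard convention that the coverings under consideration have connected domain (as is the case in all applications in this paper), the argument runs globally; otherwise it applies verbatim to each connected component of $\call C$ on which $\varphi$ has a fixed point, with freeness on the remaining components being automatic.
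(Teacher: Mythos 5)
Your proof is correct and follows exactly the route the paper itself gestures at: the paper states that freeness follows from the unique edge-path lifting property (Proposition~\ref{prop:uniquelift}) and leaves the details implicit, which are precisely what you supply, together with the useful observation that a covering's local bijectivity on stars finishes the argument for morphisms. The connectedness caveat you raise is a reasonable reading of the standing conventions in Bridson--Haefliger's Appendix III.$\mathcal C$, to which the paper defers.
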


\begin{defi}
  A covering of acyclic categories $p:\call C \to \call D$ is said to be
  a \emph{Galois covering} if the action of $\mbox{Aut}(p)$ 
  is transitive on every fibre $p^{-1}(d)$. In this case $\mbox{Aut}(p)$ 
  is also called the \emph{Galois group} of $p$.
\end{defi}

\begin{prop}\label{prop:rivestimentoquoziente}
  Let $p: \call C \to \call D$ be a Galois covering of acyclic categories,
  $G = \mbox{Aut}(p)$. Then there is an isomorphism $\Psi$ of covering maps,
  such that the following diagram commutes:
  \begin{displaymath}
    \xymatrix{
      & \call D \ar@/^/[dd]^\Psi\\
      \call C \ar[ur]^p \ar[dr]_\pi & & \\
      & \call C/G
    }
  \end{displaymath}
\end{prop}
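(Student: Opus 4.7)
The plan is to construct an explicit functor $\Psi: \call D \to \call C/G$ that inverts the canonical factorization of $p$ through the quotient $\pi$, and then to verify it is an isomorphism. The two essential ingredients will be the unique lifting of edge paths (Proposition \ref{prop:uniquelift}) and the transitivity of the Galois group action on fibres. Once $\Psi$ is built, the commutativity $\Psi \circ p = \pi$ will be essentially tautological, and bijectivity will follow from the transitivity of $G$ and the surjectivity of $p$.

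On objects I would set $\Psi(d) := [c]_G$ for any $c \in p^{-1}(d)$; since $G$ acts transitively on the fibre $p^{-1}(d)$, the orbit is independent of the choice. On a morphism $e: d \to d'$ of $\call D$, I would fix a representative $c \in p^{-1}(d)$, let $f$ be the unique lift of $e$ with source $c$, and set $\Psi(e) := [f]_G$. Replacing $c$ by $g(c)$ replaces $f$ by $g(f)$, because $p \circ g = p$ forces $g(f)$ to be the unique lift of $e$ starting at $g(c)$; hence $[f]_G$ is independent of the choice. Functoriality then follows since concatenation of lifts is a lift of the composition, and identities lift to identities.

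Commutativity of the diagram is immediate from the construction: for $c \in \call O(\call C)$ we have $\Psi(p(c)) = [c]_G = \pi(c)$, and for a morphism $f$ of $\call C$, the element $f$ is itself the unique lift of $p(f)$ starting at its source, so $\Psi(p(f)) = [f]_G = \pi(f)$. For injectivity of $\Psi$, suppose $\Psi(d_1) = \Psi(d_2)$ with representatives $c_i \in p^{-1}(d_i)$; the hypothesis gives some $g \in G$ with $g(c_1) = c_2$, and then $d_2 = p(c_2) = p(g(c_1)) = p(c_1) = d_1$. An entirely parallel argument, using the uniqueness of lifts and the freeness of the $G$-action on morphisms, gives injectivity on morphism sets. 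Surjectivity of $\Psi$ follows from the identity $\Psi \circ p = \pi$ together with the surjectivity of $p$ on objects and morphisms (a property of coverings).

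The main obstacle I anticipate is foundational rather than computational: one must make sure that $\call C/G$ really does make sense as an acyclic category and that the quotient manipulations are legitimate. Concretely, one should check that the $G$-action is free --- a non-identity $g$ fixing an object would produce two distinct lifts of an identity edge starting at the same object, contradicting Proposition \ref{prop:uniquelift} --- and that no new invertible or looping morphisms are introduced in the quotient, which follows from freeness together with the local bijectivity of $p$. With these points secured, the verification that $\Psi$ is a well-defined isomorphism of covering maps becomes routine bookkeeping.
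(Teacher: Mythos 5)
Your proof is correct and follows essentially the same route as the paper: define $\Psi$ by sending a cell of $\call D$ to the $G$-orbit of (a lift of) it, use Proposition \ref{prop:uniquelift} and transitivity of $G$ on fibres to show the assignment is well-defined on morphisms, verify functoriality and commutativity, and then establish bijectivity. The only noticeable difference is in the last step: where you invoke the general fact that a functor bijective on both objects and morphisms is an isomorphism of categories, the paper instead appeals to the universal property of the quotient $\call C/G$ to produce a functorial inverse $\Phi$ and then identifies it with $\Psi^{-1}$. Both closings are sound; yours is a touch more direct, while the paper's emphasizes that $\call C/G$ really is the categorical quotient. Your preliminary remark that one must check $\call C/G$ is a well-defined acyclic category (freeness of the action, no new loops) is a worthwhile caution; the paper handles freeness in a separate preceding proposition rather than inside this proof.
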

\begin{proof}
  We already know that $G$ acts transitively on the fibers
  $p^{-1}(d)$ with $d \in \mbox{Ob}\,\call D$. $G$ must act transitively also
  on the fiber $p^{-1}(m)$ of the morphisms
  $m \in \mbox{Mor}\,\call D$. Indeed, let $g m = m$ for some $g \in G$
  and $m \in \mbox{Mor}\,\call D$, then $gi(m) = i(m)$.
  Therefore the map $\Psi(p(m)) = \pi(m)$ 
  is well-defined, bijective and is the unique set map that let the diagram
  commute.

  We still need to prove that $\Psi$ is a functor. That is that,
  $\Psi(p(m)\circ p(n)) = \Psi(p(m)) \circ \Psi(p(n))$. We can choose
  $m$ and $n$ to be composable, so that
  \begin{displaymath}
    \Psi(p(m)\circ p(n)) = 
    \Psi(p(m\circ n)) = \pi(m\circ n) =
    \pi(m) \circ \pi(n) = \Psi(p(m)) \circ \Psi(p(n)).
  \end{displaymath}
  
  To prove that $\Psi$ is an isomorphism, we need still to show that
  $\Psi^{-1}$ (defined as a set map) is a functor. 
  Since $\call C/G$ is a quotient both in the category of sets and in those
  of acyclic categories, there is a unique map $\Phi: \call C/G \to \call D$
  such that $\Phi\circ\phi = p$, and this map is a functor. It then
  needs to be $\Phi = \Psi^{-1}$.
\end{proof}
}{}

\bibliographystyle{plain}
\bibliography{toric}

\end{document}